\title{\normalsize \bf FINE SHAPE OF METRIZABLE SPACES AS A LEFT FRACTION LOCALIZATION}
\author{\footnotesize VLADISLAV ZEMLYANOY \\
  \footnotesize {\it Doctoral School in Mathematics, Higher School of Economics (HSE University)} \\
  \footnotesize {\it Moscow, Russia} \\
  \footnotesize {\it tagambit@yandex.ru} \\
  \footnotesize {\it ORCID} 0000-0001-7450-5325
}
\date{}
\begin{document}
\def \R {\mathbb{R}}
\def \N {\mathbb{N}}
\def \i {\imath}
\def \j {\jmath}

\setlength{\parskip}{0.25em}

\newcounter{count}[section]
\renewcommand{\thecount}{\arabic{section}.\arabic{count}}

\theoremstyle{plain}
\newtheorem{theorem}[count]{Theorem}
\newtheorem*{theorem*}{Theorem}
\newtheorem{lemma}[count]{Lemma}
\newtheorem{proposition}[count]{Proposition}
\newtheorem{corollary}[count]{Corollary}

\theoremstyle{definition}
\newtheorem{definition}[count]{Definition}
\newtheorem*{convention}{Convention}
\newtheorem*{notation}{Notation}
\newtheorem{example}[count]{Example}

\theoremstyle{remark}
\newtheorem{remark}[count]{Remark}

\maketitle

\begin{abstract}
\noindent The strong shape category of compact metrizable spaces 
(compacta) is very well-studied; extending it to noncompact spaces, 
however, introduces computational complexity that makes it hard to work 
with. The fine shape category, as defined by Melikhov, seems to 
hold promise in terms of both applicability and simplicity: it is a 
different extension of compact strong shape to a generalized homotopy 
theory of metrizable spaces that is compatible with both \v{C}ech 
cohomology and Steenrod-Sitnikov homology, and its definition lends 
itself to straightforward proofs. Further research seems to be in order. 
One goal to have in mind is to show the fine shape category to be a 
homotopy category in Quillen's sense, which implies representation as a 
localization. But the strong shape of compacta was shown to be a left 
fraction localization in several ways; we extend the representation 
given by Cathey to fine shape, introducing the notion of FDR-embeddings 
to extend Cathey's SSDR-maps. In the process, we also introduce what we 
call the mapping cylinder of an approaching map; such a construction has 
been defined by Ferry and elaborated on by Mrozik in the compact case, 
yet it seems the direct extension on noncompact spaces is not possible. 
Thus we resort to a somewhat different definition.
\vskip 1em
\noindent {\it Keywords:} Metrizable topological spaces; shape theory; fine shape; localization of categories; calculus of fractions.
\vskip 1em
\noindent AMS Subject Classification: 18E35, 54C56, 55P55, 55P60
\end{abstract}

\section{Introduction}
\label{sec_intro}

Strong shape theory has been introduced as early as 1944 by 
Christie~\cite{Christie1944}. Many definitions of the same have been 
later given independently for compacta, that is, compact metrizable 
spaces~\cite{Bauer1978, Kodama1979, Calder1981, Cathey1981, DydakSegal1981}, 
all equivalent on these, and only differing in representation; these 
mostly arose as modifications of Borsuk's shape theory after its 
publication~\cite{Borsuk1968}. The sole resulting category over compacta 
is well-established, having been extensively studied and used to solve 
various problems. There is also a known strong shape category of all 
topological spaces~\cite{Batanin1997,Mardesic2000}, which is quite 
complex in both definition and computations; thus it is much less 
researched or applied.

A different notion has been suggested by Melikhov~\cite{Melikhov2022F}, 
called fine shape. This name originally was used by Kodama and 
Ono~\cite{Kodama1979} for compacta only, whereas~\cite{Melikhov2022F} 
extends it to all metrizable spaces. This fine shape still coincides 
with strong shape on compacta. However, fine shape has a far simpler 
definition that the noncompact strong shape; thus, is should be easier 
to prove meaningful results for it. In particular, invariants of fine 
shape include (see~\cite[Corollary~1.3]{Melikhov2022F}) both 
\v{C}ech cohomology and Steenrod-Sitnikov homology (the latter being 
defined as the direct limit of Steenrod homology of compacta; 
see~\cite[footnote~3]{Melikhov2022F}). These two theories seem to be a 
good choice of universal homology and cohomology pair for metrizable 
spaces (a survey to that effect can be found in~\cite{Sklyarenko1979}). 
Thus this fine shape theory promises to be a working (generalized) 
homotopy theory for metrizable spaces complementing \v{C}ech cohomology 
and Steenrod-Sitnikov homology.

The construction of this fine shape that we shall use relies on the 
notion of an approaching map, originally going as far back as Quigley's 
work~\cite{Quigley1973}. The version we use is somewhat more general 
than the one often used in the past for compacta:

\begin{definition}
Let $X$ be a closed subspace of a metrizable topological space $M$. We 
say that $X$ is {\it homotopy negligible} in $M$ to mean that there is a 
homotopy $H\colon M \times [0,1] \to M$ such that $H_{0} = id_{M}$ (the 
identity map of $M$) and $H(M \times (0,1]) \subseteq M\setminus X$ (so 
the image of $M$ under $H_{t}$ does not cross $X$ for $t>0$).

Now assume also $Y$ is closed and homotopy negligible in $N$ in the same 
way, and that $\phi\colon M\setminus X \to N\setminus Y$ is a continuous 
map. We say that $\phi$ is an {\it $X-Y$-approaching map} to mean that 
for every sequence $\{m_{i}\} \subset M\setminus X$ converging to a 
point of $X$, the sequence $\{\phi(m_{i})\}$ has a subsequence 
converging to a point of $Y$ (in other terms, $\{\phi(m_{i})\}$ has an 
accumulation point in $Y$).
\end{definition}

There are several other equivalent 
definitions~\cite[Theorem~1]{Melikhov2022F}. It should be also noted 
that if we restrict ourselves to compact metrizable spaces, this is 
equivalent to saying that $\phi$ is proper (i.e., inverse images of 
compact subsets of $N\setminus Y$ are compact). This provides a way of 
showing that fine shape restricts to strong shape on compacta.

Now fine shape itself can be explained as follows:

\begin{definition}
Given two metrizable spaces $X$ and $Y$ and any absolute retracts (ARs) 
$M$ and $N$ containing $X$ and $Y$ respectively as closed homotopy 
negligible subsets, the set of {\it fine shape classes} $[X,Y]_{fSh}$ is 
the set of $X-Y$-approaching maps 
$\phi\colon M\setminus X \to N\setminus Y$ up to an $X-Y$-approaching 
homotopy (such a homotopy is an $(X \times [0,1])-Y$-approaching map). 
It turns out that the set $[X,Y]_{fSh}$ is independent of the choice of 
$M$ and $N$.

The {\it fine shape category} $\mathrm{fSh(M)}$ has all metrizable 
spaces as objects, and fine shape classes as morphisms.
\end{definition}

It is not hard to prove that every homotopy class of oridnary continuous 
maps $[f] \in [X,Y]$ determines a unique fine shape class 
$[f]_{fSh} \in [X,Y]_{fSh}$. Fine shape is in general weaker; in 
particular, a map $f$ that is not a homotopy equivalence can induce an 
isomorphism in fine shape (such a map is called a 
{\it fine shape equivalence}).

One can hope to extract further use and applications out of this version 
of shape theory for noncompact metrizable spaces where it is hard to do 
so for noncompact strong shape. A simple test of this is to take a 
result that has been proven for compact strong shape, and extend it to 
fine shape of all metrizable spaces. Yet it should also be possible to 
prove results for fine shape that do not restrict to compacta at all. In 
particular, we expect to be able to prove that the fine shape category 
is in fact a homotopy category in the sense of 
Quillen~\cite[Chapter~I, section~1, Definition~6]{Quillen1967} by 
constructing a corresponding model structure on the category 
$\mathrm{M}$ of metrizable spaces and continuous maps. An equivalent of 
this has never been constructed for the strong shape of compacta; we do 
not expect it to be possible either, because some of the spaces arising 
there (the path spaces in particular) are practically bound to be 
noncompact even when starting with compacta (in fact, Cathey runs into 
this problem in~\cite[Theorem~(2.5)]{Cathey1981}; the space $|X|$ there 
is in general not compact). This search for a model structure also 
raises the question of representing fine shape morphisms by usual maps 
of spaces via the localization used in Quillen's definition.

Among others, Cathey~\cite{Cathey1981} defines the strong shape category 
of compacta as a left fraction localization (of $\mathrm{hCM}$, the 
category of compacta and homotopy classes of continuous maps); it is 
thus known that this description of compact strong shape is equivalent 
to the others. Also works by Calder and Hastings~\cite{Calder1981} and 
by Mrozik~\cite{Mrozik1990} construct localizations by different 
morphism classes (of the same category), and show those still to be 
equivalent to the same strong shape category of compacta.

The goal of the present work is to extend Cathey's definition to fine 
shape, representing the fine shape category of~\cite{Melikhov2022F} as a 
left fraction localization. This gives us a simple representation of 
fine shape by (homotopy classes of) usual maps:

\begin{theorem}~(Corollary~\ref{fSh_represent})
For any fine shape class $[\phi] \in [X,Y]_{fSh}$, there exist a space 
$Z$ along with maps (which can be chosen to be closed embeddings) 
$u\colon X \to Z$ and $i\colon Y \to Z$ such that $i$ is a fine shape 
equivalence and $[i]_{fSh}^{-1}\circ[u]_{fSh} = [\phi]$.
\end{theorem}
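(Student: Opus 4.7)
The plan is to realize $Z$ as a (suitably defined) mapping cylinder of an approaching map representing $[\phi]$, so that the desired triangle of fractions is obtained essentially by construction.

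First, I would fix ARs $M$ and $N$ containing $X$ and $Y$ respectively as closed homotopy negligible subsets (such ARs exist by standard embedding theorems, e.g.\ via embedding into convex subsets of normed linear spaces). The fine shape class $[\phi]_{fSh}$ is then represented by an honest $X$--$Y$-approaching map $\phi\colon M\setminus X \to N\setminus Y$, so the task reduces to producing $Z$, $u$, $i$ from the data $(M, X, N, Y, \phi)$.

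Second, and this is the technical heart of the argument, I would construct the mapping cylinder $Z_{\phi}$ of the approaching map $\phi$. As hinted in the abstract, the naive recipe --- take $(M\setminus X)\times[0,1]$, glue the top to $N\setminus Y$ along $\phi$, glue the bottom to $M\setminus X\subset M$, and hope $X$ and $Y$ sit correctly at the ends --- does not produce a well-behaved metrizable space in the noncompact setting, so a modified construction is needed (this is precisely the construction the paper advertises as ``the mapping cylinder of an approaching map''). The output should be a metrizable space $Z$ containing copies of $M$ and $N$ as closed subspaces meeting at the mapping cylinder piece in the expected way, together with a canonical approaching deformation from $Z\setminus Y$ onto the $M$-end. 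Taking $u\colon X\hookrightarrow M\hookrightarrow Z$ and $i\colon Y\hookrightarrow N\hookrightarrow Z$ gives closed embeddings for free.

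Third, I would invoke the FDR-embedding apparatus built up earlier in the paper to conclude that $i\colon Y\hookrightarrow Z$ is an FDR-embedding, hence a fine shape equivalence: the deformation of $Z\setminus Y$ onto $N$-minus-$Y$ via the cylinder coordinate, combined with the homotopy negligibility of $Y$ in $N$, should be exactly the data packaged in the definition of an FDR-embedding. Finally, to verify $[i]_{fSh}^{-1}\circ[u]_{fSh}=[\phi]_{fSh}$, I would exhibit, directly from the cylinder coordinates, an $X$--$Y$-approaching homotopy between $\phi$ and the composite $u$ followed by the fine shape inverse of $i$, i.e.\ a homotopy inside $Z\setminus Y$ from the $M$-end to the $N$-end along the cylinder rays.

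The main obstacle is clearly step two: making the mapping cylinder construction actually work for a noncompact approaching map, so that $Z$ is metrizable, the inclusions of $X$ and $Y$ are closed, and the would-be deformation retraction of $Z\setminus Y$ onto $N\setminus Y$ is itself approaching. Everything else (the closed embeddings, the FDR-property of $i$, and the identification of the composite with $[\phi]$) is expected to be formal once the cylinder is in hand, provided its construction has been arranged to expose precisely the FDR-embedding structure introduced earlier.
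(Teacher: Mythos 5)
Your plan is essentially the same as the paper's (Proposition~\ref{FC_fdr} and Corollary~\ref{fSh_represent}): represent $[\phi]$ by an approaching map between complements in ARs, build the approaching mapping cylinder, take the two end embeddings as $u$ and $i$, show $i$ is an FDR-embedding, and read off $[i]_{fSh}^{-1}\circ[u]_{fSh}=[\phi]$ from the cylinder's deformation retraction onto the $N$-end. One conceptual slip worth flagging: you describe the target $Z$ as ``a metrizable space containing copies of $M$ and $N$ as closed subspaces'' with $u\colon X\hookrightarrow M\hookrightarrow Z$ and $i\colon Y\hookrightarrow N\hookrightarrow Z$. That cannot be the right $Z$ --- if $Z$ contained the AR $N$ as a closed subspace and deformation-retracted onto it, $Z$ would be contractible, and $i$ would generically fail to be a fine shape equivalence. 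In the paper the cylinder comes as a \emph{pair}: the big contractible AR $AC(\phi)$ (which does contain $M$ and $N$) and the small space $FC(\phi)$ sitting inside it as a closed homotopy negligible subset; $Z$ is $FC(\phi)$, whose underlying set contains $X$ and $Y$ but not $M$ or $N$, and the FDR-embedding structure of $i$ is witnessed precisely by the approaching deformation retraction of $AC(\phi)\setminus FC(\phi)$ onto $N\setminus Y$. Also note the paper needs $X$ to be \emph{additively} homotopy negligible in $M$ to make the cylinder construction work (that choice is always available by Proposition~\ref{AR_props}(7)), which your setup omits. With those corrections your outline matches the paper's proof.
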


Moreover, if fine shape will be proven to be a homotopy category in 
Quillen's sense (which we expect to do), we will be able to represent 
its morphisms with left fractions of homotopy classes alone, as opposed 
to the general case (which uses equivalence classes of finite chains of 
morphisms and inversions of morphisms of the original category); this 
will provide a convenient way of working with $\mathrm{fSh(M)}$ as a 
homotopy category. In fact, due to the universal property of left 
fraction localization 
(stated for reference in Proposition~\ref{left_universal}), the homotopy 
category functor $\mathrm{M} \to \mathrm{fSh(M)}$, if it exists, must 
factorize through $\mathrm{hM}$. We structure our work as follows.

Section~\ref{sec_prelim} contains all previously established results we 
shall make use of; it is by necessity quite large, to ensure that all 
inobvious definitions and statements are explained and supported by 
references. In section~\ref{sec_fdr}, we introduce and explore the 
notion of an FDR-embedding, extending Cathey's SSDR-maps to noncompact 
metrizable spaces. Same as for these, we have a simple description 
(Theorem~\ref{WC_is_fdr}): an FDR-embedding is precisely a closed 
embedding that is a fine shape equivalence. In spirit 
of~\cite{Cathey1981}, in section~\ref{sec_fdr_localization} we consider 
the class $[FDR]$ of homotopy classes of FDR-embeddings in $\mathrm{hM}$, 
the category of metrizable spaces and homotopy classes of continuous 
maps; we show (Corollary~\ref{left_exists}) that there exists a category 
$[FDR]\backslash\mathrm{hM}$, the left fraction localization of 
$\mathrm{hM}$ at $[FDR]$ (the same cannot be done without resorting to 
homotopy classes). Section~\ref{sec_functor_S} defines the functor $S$ 
from this localization to the fine shape category $\mathrm{fSh(M)}$. 
Construction of the inverse functor $T$, as well as the proof of the 
inversion, are postponed until section~\ref{sec_functor_T}; there we show 
(Theorem~\ref{ST_inverse}) the two categories to be isomorphic. Thus the 
final result of the present work can be condensed to the following

\begin{theorem}~(Corollary~\ref{left_exists},~Theorem~\ref{ST_inverse})
There exists a well-defined category of left fractions 
$[FDR]\backslash\mathrm{hM}$, isomorphic to the fine shape category 
$\mathrm{fSh(M)}$.
\end{theorem}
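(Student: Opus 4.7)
The plan is to combine Corollary~\ref{left_exists} (asserting that $[FDR]\backslash\mathrm{hM}$ is a well-defined left-fraction localization) with Theorem~\ref{ST_inverse} (producing the claimed isomorphism via mutually inverse functors $S$ and $T$), so the proof will be in two parts.

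For the first part, I would verify the classical Gabriel-Zisman axioms for a left calculus of fractions on $[FDR]$ in $\mathrm{hM}$. Identities are FDR-embeddings by Theorem~\ref{WC_is_fdr}, since the identity map is trivially a closed embedding and a fine shape equivalence; closure under composition follows similarly, using the 2-out-of-3 property of fine shape equivalences. The left Ore axiom demands that for any $[j]\colon X \to X'$ with $[j]\in[FDR]$ and any $[f]\colon X \to Y$, the span $X' \xleftarrow{[j]} X \xrightarrow{[f]} Y$ can be completed to a commutative square in $\mathrm{hM}$ with the parallel FDR-leg $Y\to Y'$; the natural tool for this completion is the mapping cylinder of an approaching map introduced in Section~\ref{sec_fdr}, which should realize $Y\hookrightarrow Y'$ as a closed, homotopy-negligible, and fine-shape-equivalent subspace (hence an FDR-embedding by Theorem~\ref{WC_is_fdr}). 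The cancellation axiom should follow by a parallel cylinder argument, coequalizing parallel arrows after composing with a suitable FDR-embedding.

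For the isomorphism, I expect $S$ (defined in Section~\ref{sec_functor_S}) to send a roof $X \xrightarrow{[u]} Z \xleftarrow{[j]} Y$ with $[j]\in[FDR]$ to $[j]_{fSh}^{-1}\circ[u]_{fSh}$, which makes sense precisely because $j$ is a fine shape equivalence. The functor $T$ of Section~\ref{sec_functor_T} should invert this by extracting a roof representative from a fine shape class $[\phi]\in[X,Y]_{fSh}$ in the spirit of Corollary~\ref{fSh_represent}: embed $X,Y$ into ARs $M,N$ as closed homotopy-negligible subsets so that $[\phi]$ is represented by an approaching map $\phi\colon M\setminus X\to N\setminus Y$, then use the mapping cylinder to assemble an ambient $Z$ into which both $X$ and $Y$ embed, with $Y\hookrightarrow Z$ an FDR-embedding. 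The identities $T\circ S=\mathrm{id}$ and $S\circ T=\mathrm{id}$ should then follow by diagram-chasing the cylinder construction through the left-fraction equivalence on roofs and through approaching-homotopy equivalence on fine shape classes, respectively; well-definedness of both functors on equivalence classes is a prerequisite and must also be checked at this stage.

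The hard part will be the mapping cylinder construction itself in the noncompact setting: as the abstract warns, Ferry's and Mrozik's compact construction does not directly extend, so Section~\ref{sec_fdr}'s modified cylinder must be carefully designed to deliver both the Ore completion used in Corollary~\ref{left_exists} and the explicit roof representative produced by $T$. Once the cylinder is in hand and Theorem~\ref{WC_is_fdr} identifies $[FDR]$ as the class of closed embeddings that are fine shape equivalences, the remaining verifications should reduce to routine diagram chases relying on 2-out-of-3 for fine shape equivalences and independence of $[X,Y]_{fSh}$ from the choice of ambient ARs.
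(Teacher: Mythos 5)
Your overall architecture matches the paper's: first verify the left Ore conditions for $[FDR]$ in $\mathrm{hM}$, then exhibit mutually inverse functors $S$ (collapsing a roof $[j]\backslash[u]$ to $[j]_{fSh}^{-1}\circ[u]_{fSh}$) and $T$ (extracting a roof from a fine shape class via a cylinder). That is exactly the structure of Corollary~\ref{left_exists} and Theorem~\ref{ST_inverse}, so the decomposition is right.

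There are two notable divergences worth flagging. First, for the Ore completion the paper does \emph{not} use the approaching cylinder of Section~\ref{sec_AC_FC}: given an FDR-embedding $i\colon A\to X$ and an ordinary map $u\colon A\to Y$, Proposition~\ref{fdr_Ore_2} completes the square with the ordinary \emph{metrizable} mapping cylinder $Z=MC_X(u)$, and Proposition~\ref{fdr_Ore_3} is deduced from it via Corollary~\ref{II_fdr}. The approaching cylinder $(AC(\phi),FC(\phi))$ is reserved for the functor $T$, and for a real reason: a general fine shape class is not induced by any ordinary map, so only for $T$ does one have an approaching map $\phi$ to take the cylinder of, whereas in the Ore condition the datum is just an ordinary map $u$ and there is no approaching map naturally in sight to feed the construction. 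Your phrase ``mapping cylinder of an approaching map introduced in Section~\ref{sec_fdr}'' misattributes the construction (it lives in Section~\ref{sec_AC_FC}) and, more substantively, points at the wrong tool for the Ore axioms.

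Second, you route the verification that identities, compositions, and the Ore leg are FDR-embeddings through Theorem~\ref{WC_is_fdr}. That is a valid alternative, but the paper deliberately avoids it: Proposition~\ref{fdr_closed} proves closure under identity and composition directly from Definition~\ref{define_fdr}, and the Ore propositions construct the approaching strong deformation retraction onto $Y$ inside $MC_X(u)$ explicitly. The paper even remarks that Theorem~\ref{WC_is_fdr} is not used in the rest of the argument. Your detour does not actually save effort either, since certifying that $Y\hookrightarrow MC_X(u)$ is a fine shape equivalence amounts to the same retraction construction. Finally, the most substantive work in the paper's $TS=\mathrm{id}$ and well-definedness of $T$ goes through Lemma~\ref{fraction_equiv}, built on Propositions~\ref{AC_extend} and~\ref{embed_MC}; your sketch acknowledges these checks are needed but does not indicate the mechanism, which is the genuinely technical core of the paper.
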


One additional notable fact arises during the proof:

\begin{theorem}~(Corollary~\ref{maps_fSh_homotopic})
Given two maps $f,g\colon X \to Y$ such that $[f]_{fSh} = [g]_{fSh}$, 
there exist a space $Z$ along with a map $h\colon Y \to Z$ such that $h$ 
is a closed embedding and a fine shape equivalence, and $hf$ is 
homotopic to $hg$.
\end{theorem}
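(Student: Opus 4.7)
The plan is to deduce this statement directly from the left fraction representation of fine shape established in the paper, together with the standard characterization of when two morphisms become equal in a calculus of left fractions. By Theorem~\ref{ST_inverse}, the functor $S\colon [FDR]\backslash\mathrm{hM} \to \mathrm{fSh(M)}$ is an isomorphism of categories, and (by its construction in Section~\ref{sec_functor_S}) its precomposition with the canonical localization functor $\mathrm{hM} \to [FDR]\backslash\mathrm{hM}$ sends a homotopy class $[f]$ to its fine shape class $[f]_{fSh}$. Therefore the hypothesis $[f]_{fSh} = [g]_{fSh}$ translates, via $S^{-1}$, into the equality of $[f]$ and $[g]$ as morphisms $X \to Y$ in $[FDR]\backslash\mathrm{hM}$.

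Next I would invoke the standard property of a left calculus of fractions that underlies Corollary~\ref{left_exists}: two parallel morphisms $u, v\colon X \to Y$ of $\mathrm{hM}$ become equal in $[FDR]\backslash\mathrm{hM}$ if and only if there exists a morphism $[h]\colon Y \to Z$ in $[FDR]$ satisfying $[h]\circ u = [h]\circ v$ already in $\mathrm{hM}$. Applied to $u = [f]$ and $v = [g]$, this produces a map $h\colon Y \to Z$ whose homotopy class lies in $[FDR]$ and for which $hf \simeq hg$. By Theorem~\ref{WC_is_fdr}, $[FDR]$ consists precisely of homotopy classes of closed embeddings that are fine shape equivalences, so I would replace $h$, if necessary, by an actual FDR-embedding in the same homotopy class (such a replacement preserves $hf \simeq hg$), thereby obtaining the required $Z$ and $h$.

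The hard part of the argument is packed entirely into the preceding sections, not into the step above: establishing that $[FDR]$ admits a left calculus of fractions (Corollary~\ref{left_exists}), that the resulting localization is isomorphic to $\mathrm{fSh(M)}$ via $S$ and $T$ (Theorem~\ref{ST_inverse}), and that the composite $\mathrm{hM} \to [FDR]\backslash\mathrm{hM} \xrightarrow{S} \mathrm{fSh(M)}$ recovers the canonical fine shape functor. Once these are in place, the theorem is an immediate corollary of the universal property of left fraction localizations (Proposition~\ref{left_universal}) together with the identification of FDR-embeddings in Theorem~\ref{WC_is_fdr}, with essentially no fine-shape-specific work remaining at this stage.
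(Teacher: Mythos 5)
Your proof is correct and reaches the result by essentially the same route as the paper: translate the fine shape equality into equality of the two fractions $P([f])$ and $P([g])$ in $[FDR]\backslash\mathrm{hM}$, then read off the coequalizing FDR-embedding from the definition of fraction equivalence. The paper does the translation via the functor $T$ together with Remark~\ref{FC_equiv_to_MC} (identifying $T([f]_{fSh})$ with the cylinder fraction $X \to MC(f) \leftarrow Y$), which only requires the well-definedness established in Theorem~\ref{define_T}; you instead route through $S$ and the full isomorphism of Theorem~\ref{ST_inverse}, together with the observation that $S \circ P$ is the canonical fine shape functor (which follows from $S([i]\backslash[u]) = [i]_{fSh}^{-1}\circ[u]_{fSh}$ applied to $i = id_Y$, or from Proposition~\ref{left_universal}). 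That is a slightly heavier dependency but perfectly sound. The ``standard property'' you invoke --- that $P(u)=P(v)$ iff some $\Sigma$-morphism coequalizes $u$ and $v$ --- is not stated explicitly in the paper, but follows at once from Definition~\ref{def_left_cat}(3) by taking both denominators to be $id_{Y}$, so there is no real gap. The paper's proof additionally spells out an explicit construction of $Z$ (as $FC(\omega)$ containing copies of $MC(f)$ and $MC(g)$), which it needs for the ``moreover'' clause of the full corollary; for the statement as you were given it, your purely formal argument suffices.
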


Section~\ref{sec_AC_FC} introduces a construction we use in the process. 
Specifically, we extend the concept of mapping cylinder to 
approaching maps between metrizable spaces; for compacta, this has been 
done explicitly by Mrozik~\cite{Mrozik1990}, refering an earlier 
construction of Ferry~\cite{Ferry1980}, yet it seems that a 
modification is required for the noncompact case. This modification is 
what section~\ref{sec_AC_FC} is devoted to; the appendix at the end 
elaborates on why the modification is needed at all. Our description is 
unfortunately quite cumbersome (if not complex), though we do offer some 
explanation of what it actually represents (see 
Remark~\ref{FC_blow_up}). What we prove for this cylinder construction 
suffices for our goals, yet there are some unresolved questions there; 
in particular (see~Remark~\ref{AC_universal} and the appendix), the 
question of the universal property echoing that of the usual mapping 
cylinder. We do not dwell on this question here, but it may still be of 
some interest.

\section{Preliminaries}
\label{sec_prelim}

In this section, various notions and results are collated for 
reference. All of these are either previously established or trivial 
(often both), though not all may be widely known.

\subsection{Spaces}
\label{def_spaces}

We work exclusively with metrizable topological spaces and 
(topologically) continuous maps between them, so we adopt the following

\begin{convention}
By a space, we shall always mean a metrizable topological space, unless 
specified otherwise. By a map between spaces, we shall always mean a 
continuous map.
\end{convention}

\begin{definition}
By $\mathrm{M}$ we denote the category of metrizable topological spaces 
and continuous maps. By $\mathrm{hM}$ we denote the category of 
metrizable topological spaces and homotopy classes of continuous maps. 
\end{definition}

\begin{notation}
We denote the homotopy class of a map $f$ by $[f]$, and we write 
$f \simeq g$ to mean that maps $f$ and $g$ are homotopic. We denote the 
set of all homotopy classes from $X$ to $Y$ by $[X,Y]$. For every space 
$X$, we denote the identity map of $X$ by $id_{X}$. Restriction of a map 
$f\colon X \to Y$ to a subspace $A \subseteq X$, we denote by $f|_{A}$.
\end{notation}

\subsection{Metrizable joins and metrizable mapping cylinders}
\label{def_metric}

As we restrict ourselves to metrizable spaces, we will need a number of 
constructions that provide metrizable analogues to well-known 
topological objects. This subsection is fully based 
on~\cite[Chapter II]{Melikhov2022T}.


\begin{definition}\label{def_join}
Given two spaces $M$ and $N$ whose topologies are generated by some 
metrics $d_{M}$ and $d_{N}$, both bounded by unity, the 
{\it metrizable join} $M \star N$ is the space that has the same 
underlying set as the usual (topological) join --- 
$(M \times [-1,1] \times N)/\sim$, where $(m,1,n) \sim (m,1,n')$ and 
$(m,-1,n) \sim (m',-1,n)$ for all $m,m' \in M$ and $n,n' \in N$ --- with 
the topology given by the metric~\cite[before Remark~7.26]{Melikhov2022T}
$$
d((m,s,n),(m',s',n')) := \min\left\{
\begin{array}{lr}
  d_{M}(m,m') + |s - s'| + d_{N}(n,n'), \\
  d_{M}(m,m') + |1 - s| + |1 - s'|, \\
  |s + 1| + |s' + 1| + d_{N}(n,n'), \\
  4 - |s - s'| \\
\end{array}
\right\}.
$$
\end{definition}

\begin{remark}\label{join_remarks}
(1)As per the reference, the topology of $M \star N$ is independent of 
the choice of $d_{M}$ and $d_{N}$.

(2)Often (but not always) this space simply coincides with the 
topological join --- for example, whenever $M$ and $N$ are compact.

(3)Since we work with metrizable spaces, we reiterate that the notation 
$M \star N$ shall always refer to the metrizable join, rather that the 
topological one.

(4)We note explicitly that in $M \star N$, 
$(m_{k},s_{k},n_{k}) \to (m,1) = m$ if and only if $m_{k} \to m$ and 
$s_{k} \to 1$, $(m_{k},s_{k},n_{k}) \to (-1,n) = n$ if and only if 
$n_{k} \to n$ and $s_{k} \to -1$, and for $s \in (-1,1)$, 
$(m_{k},s_{k},n_{k}) \to (m,s,n)$ if and only if $m_{k} \to m$, 
$s_{k} \to s$, and $n_{k} \to n$. Thus metrizable join has not only the 
same underlying set, but also the same sequence convergence as the 
topological one.

(5)It is easy to see that for a given topology on a set, there can be at 
most one metrizable topology on the same set with the same sequence 
convergence. Therefore, metrizable join is uniquely defined as the 
metrizable space with the same underlying set and sequence convergence 
as the topological join, and the explicit metric shows that it does 
exist for any two metrizable spaces.

(6)As usual, we have standard embeddings of $M$ and $N$ into their join. 
\end{remark}

\begin{notation}
A point $m \in M$ in the join $M \star N$ shall be denoted by $(m,1)$, 
or by $(m,1,n)$ for any $n \in N$, or simply by $m$. A point $n \in N$, 
similarly, by $(-1,n)$, $(m,-1,n)$, or simply $n$. Any other point of 
$M \star N$ shall be denoted by $(m,s,n) \in M \times (-1,1) \times N$. 
\end{notation}

From this join, we can construct the metrizable mapping cylinder. We do 
this by embedding the latter in the former (the same can be done in the 
usual topological case):

\begin{definition}\label{def_MC}
Given spaces $M$ and $N$, and a map $f\colon M \to N$, we define the 
{\it metrizable mapping cylinder} $MC(f)$ to be the unique metrizable 
space having the same underlying set (which we take to be 
$M \times (0,1] \cup N$) and sequence convergence as the topological 
cylinder of $f$. $MC(f)$ can be constructed by inserting the underlying 
set into $M \star N$ with $N \ni n \mapsto n$ and 
$M \times (0,1] \ni (m,s) \mapsto (m,2s-1,f(m))$, and taking the 
subspace topology.
\end{definition}

\begin{remark}\label{MC_remarks}
(1)This explicit construction induces the product topology on 
$M \times (0,1]$, and the original topology on $N$. In particular, we 
have two standard embeddings, which we denote, within this remark, by 
$u\colon M = M \times \{1\} \to MC(f)$ and $i\colon N \to MC(f)$.

(2)$MC(f)$ strong deformation retracts onto its base $N$, same as in the 
topological case. What is more, this provides a homotopy (from 
$M \times [0,1]$ to $MC(f)$) between $u$ and $if$.

(3)Yet further, there is a strong deformation retraction of 
$MC(f) \times [0,1]$ onto $u(M) \times [0,1] \cup MC(f) \times \{0\}$: 
first take a strong deformation retraction of $[0,1] \times [0,1]$ onto 
$\{1\} \times [0,1] \cup [0,1] \times \{0\}$ in which 
$\{0\} \times [0,1]$ retracts along itself onto $\{0\} \times \{0\}$; 
then, for each $m \in M$, apply this strong deformation retraction to 
the embedding 
$r_{m} \times id_{[0,1]}\colon [0,1] \times [0,1] \to MC(f) \times [0,1]$, 
where $r_{m}(s) := (m,s)$ for $s > 0$ and $r_{m}(0) := f(m)$.
\end{remark}

We shall also extend the cylinder construction a little:

\begin{definition}
Assume that $L$ is a subset of $M$, and that we have a map 
$f\colon L \to N$. Then we shall say that $f$ is a {\it partial map} 
from $M$ to $N$, and define the metrizable mapping cylinder of $f$ as a 
partial map, denoted $MC_{M}(f)$, by taking the set 
$M \times \{1\} \cup L \times (0,1] \cup N$ and inserting it into 
$M \star N$ such that $M$ is embedded in the standard way, and 
$L \times (0,1] \cup N$ is embedded as $MC(f)$ into $L \star N$, which 
is then embedded into $M \star N$. In other terms, 
$MC_{M}(f) := M \cup_{L \times \{1\}} MC(f)$.
\end{definition}

Finally we shall make use of one more construction~\cite[Subsection~7.J]{Melikhov2022T}:

\begin{definition}
Assume a map $f\colon M \to N$ and a closed subset $X$ of $M$ such that 
the restriction $f|_{X}$ is perfect (i.e., closed and has compact 
inverse image of every point). Then we can take $MC(f)$ and a continuous 
function $h\colon M \to [0,1]$ such that $h^{-1}(0) = X$. By the 
{\it relative metrizable mapping cylinder of $f$ by $X$} we shall mean 
the subspace $MC(f|X) := \{(m,s) \in MC(f) \mid s \leq h(m)\} \cup N$.
\end{definition}

\begin{remark}
(1)It is easy to see that $MC(f|X)$ is a strong deformation retract of 
$MC(f)$, and that its homeomorphism class does not depend on the choice 
of the function $h$ (this uses the fact that $f|_{X}$ is perfect).

(2)Moreover, $MC(f|X)$ contains $N$ (as a closed subset) and 
$M\setminus X = \{(m,s) \mid s = h(m) > 0\}$. If $f$ is injective on 
$X$, then $MC(f|X)$ even contains $M$ as a closed subset.
\end{remark}

\subsection{Approaching maps}
\label{def_appr}

Following~\cite{Melikhov2022F}, we construct fine shape using 
approaching maps, which we define here.

\begin{definition}
Let $X$ be a closed subset of a space $M$. We say that $X$ is 
{\it homotopy negligible} in $M$ to mean that there exists a homotopy 
$H\colon M \times [0,1] \to M$ such that $H_{0} = id_{M}$ and 
$H(M \times (0,1]) \subseteq M\setminus X$. In other words, there is a 
deformation of $M$ into itself that never crosses $X$ except at the 
initial (identity) map.
\end{definition}

\begin{definition}
Let $X$ and $Y$ be closed homotopy negligible in spaces $M$ and $N$ 
respectively, and let $\phi\colon M\setminus X \to N\setminus Y$ be a 
map (continuous on its domain, as per our convention). We say that 
$\phi$ is $X-Y$-{\it approaching} to mean that for any 
sequence $\{m_{i}\} \subset M\setminus X$ converging (in $M$) to a point 
of $X$, the sequence $\{\phi(m_{i})\} \subset N\setminus Y$ contains a 
subsequence that converges (in $N$) to a point of $Y$. Equivalently, 
$\phi$ being $X-Y$-approaching means that whenever a sequence in 
$M\setminus X$ has an accumulation point in $X$, the sequence's image in 
$N\setminus Y$ has an accumulation point in $Y$.
\end{definition}

It is clear that a composition of approaching maps is an approaching 
map, and that the identity map $id_{M\setminus X}$ is $X-X$-approaching. 
Therefore we have no trouble with the following

\begin{definition}
The {\it approaching category} $\mathrm{MAppr}$ is defined as follows: 
its objects are pairs of spaces $(M,X)$ with $X$ closed homotopy 
negligible in $M$, and a morphism from $(M,X)$ to $(N,Y)$ is an 
$X-Y$-approaching map $\phi\colon M\setminus X \to N\setminus Y$.
\end{definition}

For the rest of the present work, we adopt the following

\begin{convention}
Whenever we speak of an approaching map 
$\phi\colon M\setminus X \to N\setminus Y$, we mean that $(M,X)$ and 
$(N,Y)$ are objects of $\mathrm{MAppr}$, and $\phi$ is a morphism 
between those --- that is, $\phi$ is both continuous on $M\setminus X$ 
and $X-Y$-approaching.
\end{convention}

Homotopies of approaching maps are readily defined:

\begin{definition}
Given two approaching maps 
$\phi,\psi\colon M\setminus X \to N\setminus Y$, by an {\it approaching 
homotopy} between $\phi$ and $\psi$ we mean an approaching map 
$\Phi\colon (M \times [0,1])\setminus (X \times [0,1]) \to N\setminus Y$ 
such that $\Phi_{0} = \phi$ and $\Phi_{1} = \psi$. Whenever such 
approaching homotopy exists, we say that $\phi$ and $\psi$ are 
{\it approaching homotopic}, which is an equivalence relation. Finally, 
we define the {\it approaching homotopy category} $\mathrm{hMAppr}$ to 
consist of the same objects as $\mathrm{MAppr}$ and approaching homotopy 
classes of approaching maps.
\end{definition}

In the present work, we make use of two concepts having to do with 
passing between an approaching map 
$\phi\colon M\setminus X \to N\setminus Y$ and a map defined on $X$, or 
a subset of $X$, taking it to $Y$:

\begin{definition}\label{def_phi_ex_set}
In some cases, an approaching map 
$\phi\colon M\setminus X \to N\setminus Y$ can be obtained by extending 
a map $f\colon X \to Y$, so that the two combine into a continuous map 
$\bar{f}\colon M \to N$ with $\bar{f}^{-1}(Y) = X$. In such case we say 
that $\phi$ {\it extends} $f$.

Conversely, for an approaching map 
$\phi\colon M\setminus X \to N\setminus Y$, define the 
{\it extension set} of $\phi$, denoted $X(\phi)$, as follows: a point 
$x \in X$ is in $X(\phi)$ if and only if there is a point $y \in Y$ such 
that for every sequence in $M\setminus X$ converging to $x$, the 
$\phi$-image of the sequence converges to $y$.
\end{definition}

The following fact is clear:

\begin{proposition}
$X(\phi)$ is the largest subset of $X$ for which there is a continuous 
map $f\colon X(\phi) \to Y$ combining with $\phi$ into a continuous map; 
$f$ is defined uniquely; for any subset $A$ of $X$, $\phi$ extends on 
$A$ by a map $f'\colon A \to Y$ if and only if $A \subseteq X(\phi)$, 
and in that case $f' = f|_{A}$.
\end{proposition}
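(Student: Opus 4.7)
The plan is to construct $f$ pointwise, then verify continuity, uniqueness, and maximality in that order. A key background observation, which should be recorded first, is that homotopy negligibility of $X$ in $M$ forces $M\setminus X$ to be dense in $M$: for each $x \in X$, the points $H(x,1/n) \in M\setminus X$ converge to $H(x,0) = x$, where $H$ is the defining homotopy. In particular, for each $x \in X(\phi)$ at least one sequence in $M\setminus X$ converging to $x$ exists, so by Hausdorffness of $N$ the point $y$ in the definition of $X(\phi)$ is unique, and we may well-define $f(x) := y$.

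The main technical step is continuity of the combined map $\bar{f}\colon (M\setminus X) \cup X(\phi) \to N$ given by $\phi$ on $M\setminus X$ and $f$ on $X(\phi)$. Sequential continuity suffices, since everything is metrizable. At points of $M\setminus X$ it is immediate: $M\setminus X$ is an open neighborhood on which $\bar{f}$ agrees with $\phi$. At a point $x \in X(\phi)$ and a sequence $z_n \to x$ with $z_n \in (M\setminus X) \cup X(\phi)$, the plan is to select for each $n$ a point $m_n \in M\setminus X$ with $d_M(m_n,z_n) < 1/n$ and $d_N(\phi(m_n), \bar{f}(z_n)) < 1/n$: take $m_n = z_n$ if $z_n \in M\setminus X$, otherwise extract $m_n$ from a sequence witnessing the defining property of $X(\phi)$ at $z_n$. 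Then $m_n \to x$ with all $m_n \in M\setminus X$, so $\phi(m_n) \to f(x)$ by the definition of $X(\phi)$ at $x$, and therefore $\bar{f}(z_n) \to f(x)$.

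Uniqueness of $f$ then follows at once, since any two continuous extensions would agree on the dense subset $M\setminus X$ while mapping into the Hausdorff space $N$. For the maximality statement, suppose $A \subseteq X$ and $f'\colon A \to Y$ combines continuously with $\phi$; at each $a \in A$, sequential continuity along any sequence in $M\setminus X$ converging to $a$ (which exist by density) forces the $\phi$-images to converge to $f'(a)$, and uniqueness of the limit places $a$ in $X(\phi)$ with $f(a) = f'(a)$. The converse is immediate by restricting $\bar{f}$ to $A \cup (M\setminus X)$. The only real obstacle is the sequential continuity argument above, which has to cope with sequences $z_n$ freely mixing elements of $M\setminus X$ and $X(\phi)$; the double-approximation trick works precisely because the definition of $X(\phi)$ quantifies over \emph{all} sequences approaching $x$, not just one, and because $M\setminus X$ is dense.
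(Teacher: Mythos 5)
The paper does not actually prove this proposition; it simply declares it ``clear'' and follows it with the remark that homotopy negligibility of $X$ in $M$ makes every point of $X$ a limit of a sequence in $M\setminus X$. Your proposal supplies a complete and correct proof, and the density observation you isolate at the outset is precisely the content of that remark: it guarantees both that the $y$ in the definition of $X(\phi)$ is unique (so $f$ is well-defined pointwise) and that two continuous extensions into the Hausdorff space $N$ agreeing on $M\setminus X$ must coincide. The double-approximation argument for sequential continuity at a point $x\in X(\phi)$ --- choosing $m_n\in M\setminus X$ within $1/n$ of $z_n$ in $M$ and with $\phi(m_n)$ within $1/n$ of $\bar f(z_n)$ in $N$, then invoking the ``for every sequence'' clause of the definition at $x$ --- is exactly the right move, and it correctly exploits the universal quantifier in the definition of $X(\phi)$ together with density. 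The maximality argument is likewise sound: any continuous combination with $f'$ on $A$ forces, by sequential continuity along arbitrary approaching sequences, the membership $A\subseteq X(\phi)$ and $f'=f|_A$, while the converse is a restriction. No gaps.
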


\begin{remark}
Note that homotopy negligibility of $X$ in $M$ implies that every point 
of $X$ is the limit of some sequence in $M\setminus X$.
\end{remark}

Finally, we shall also make use of a specific kind of homotopy:

\begin{definition}\label{def_H_add}
Given any homotopy $H\colon X \times [0,1] \to Y$, we say $H$ is 
{\it additive} whenever $H_{s}\circ H_{t} = H_{\min\{s+t,1\}}$ for all 
$s,t \in [0,1]$. For a space $M$ and a closed subset $X$ of $M$, we say 
that $X$ is {\it additive homotopy negligible} in $M$ whenever there is 
an additive homotopy $H\colon M \times [0,1] \to M$ such that 
$H_{0} = id_{M}$ and $H(M \times (0,1]) \subset M\setminus X$.
\end{definition}

\subsection{Absolute retracts}
\label{def_ARs}

The second notion we use to construct fine shape, along with approaching 
maps, is that of an absolute retract. This, of course, goes back to 
the Borsuk's definition of shape using absolute neighborhood retracts; 
here, though, we will not need the latter.

\begin{definition}
By an {\it absolute retract (AR)}, we shall mean a space $M$ that is, in 
fact, an absolute extensor for all metrizable spaces in the following 
sense: given any space $X$ and a closed subset $A$ of $X$, any map 
$f\colon A \to M$ can be extended to a map $\bar{f}\colon X \to M$ such 
that $\bar{f}|_{A} = f$:
$$\xymatrix{
A \ar[r]^{f} \ar[d]_{} & M \\
X \ar@{.>}[ur]_{\bar{f}}
}$$
\end{definition}

It is known~\cite[Corollary~18.3]{Melikhov2022T} that (at least with 
respect to metrizable spaces) absolute retracts and absolute extensors 
(AEs) are exactly the same spaces; by convention, we call them all ARs.

Aside from the definition, we make use of the following facts about ARs:

\begin{proposition}~\cite[Chapter~18]{Melikhov2022T}\label{AR_props}
(1)Every contractible polyhedron is an AR, including, in particular, the 
unit interval $[0,1]$;

(2)A direct product of any countable set of ARs is an AR;

(3)If $M$ and $N$ are ARs, then so is $M \star N$, and so is $MC(f)$ 
for any $f\colon M \to N$;

(4)If $M = M_{1} \cup M_{2}$ is such that $M_{1}$ and $M_{2}$ are ARs, 
both are closed in $M$, and $M_{1} \cap M_{2}$ is an AR, then $M$ is an 
AR (therefore, $MC_{M}(f)$ is an AR for any $f\colon A \to N$ such that 
$M$, $N$, and $A$ are ARs, and $A$ is a closed subset of $M$);

(5)A retract of an AR is an AR (therefore, $MC(f|X)$ is an AR for any 
$f\colon M \to N$ such that $M$ and $N$ are ARs, $X$ is closed in $M$, 
and $f|_{X}$ is perfect);

(6)\cite[Theorem~19.3]{Melikhov2022T} If $X$ is closed and homotopy 
negligible in $M$, then $M$ is an AR if and only if $M\setminus X$ is an 
AR;

(7)For any space $X$, there exists an AR $M$ containing $X$ as a closed 
subset (therefore, $M \times [0,1]$ contains $X = X \times \{0\}$ as a 
closed additive homotopy negligible subset, homotopy given by 
$H_{t}(m,s) := (m,\min\{s+t,1\})$).
\end{proposition}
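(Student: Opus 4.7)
My plan is to leverage the fact that every item is classical and attributed to \cite[Chapter~18]{Melikhov2022T} (with (6) to \cite[Theorem~19.3]{Melikhov2022T}), so the proof ultimately reduces to citing those references. For the sake of self-containment, however, I would sketch the standard arguments, grouping the items by technique. First I would dispatch (1), (2), and (5): for (1), a contractible polyhedron is an ANR (by Hanner's characterization, or by direct partition-of-unity extension) and a contractible ANR is an AR; for (2), given a map $f\colon A \to \prod_{i} M_{i}$ from a closed $A \subseteq X$, extend each component $\pi_{i} \circ f$ independently using the AR property of $M_{i}$ and assemble, continuity in the product topology being automatic; for (5), if $R \subseteq M$ is a retract of an AR $M$, extend any map $A \to R \hookrightarrow M$ using the AR property of $M$ and then compose with the retraction.

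Next I would treat (3) and (4). The gluing statement (4) is a Borsuk-type argument: given $f\colon A \to M_{1} \cup M_{2}$ with $A$ closed in some $X$, first extend $f$ on $f^{-1}(M_{1} \cap M_{2})$ into $M_{1} \cap M_{2}$ using its AR property, then use a Tietze function to partition $X$ into closed pieces $X_{1}, X_{2}$ whose intersection contains this preimage, and extend separately into $M_{1}$ and into $M_{2}$, reconciling on the overlap. For (3), both $M \star N$ and $MC(f)$ can be written as unions of products of ARs (using (2)) glued along ARs, so (4) applies; alternatively, (5) applied to the deformation retraction of $MC(f)$ onto $N$ from Remark~\ref{MC_remarks} handles the cylinder directly. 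The two parenthetical consequences for $MC_{M}(f)$ and $MC(f|X)$ then follow from (4) and (5) respectively.

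The two substantive items are (6) and (7). For (7), I would embed $X$ isometrically into the Banach space $C_{b}(X)$ of bounded continuous real-valued functions (the Kuratowski-Wojdysławski embedding), take the closed convex hull, and use the Dugundji extension theorem to conclude that a closed convex subset of a Banach space is an AR; then $X$ sits in this AR as a closed subset. The main obstacle is the hard direction of (6), that $M \setminus X$ being an AR implies $M$ is an AR: given $g\colon A \to M$ from a closed $A \subseteq Y$, one needs to push the image off $X$ using the negligibility homotopy $H$, invoke the AR property of $M \setminus X$ to extend the shifted map, and then patch back over $X$, all while ensuring continuity at points of $g^{-1}(X)$. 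The patching requires a careful paracompactness/shrinking argument in a neighborhood of $g^{-1}(X)$, which I would rely on \cite[Theorem~19.3]{Melikhov2022T} to supply rather than rederive.
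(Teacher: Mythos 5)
The paper offers no proof of this proposition: it is stated purely as a pointer to \cite[Chapter~18]{Melikhov2022T} (with (6) attributed specifically to Theorem~19.3 there), so there is no in-paper argument against which to measure your sketches. Your overall strategy of treating the items as citations is therefore exactly aligned with what the paper does, and your sketches for (1), (2), (4) and (5) are the standard ones. But two of the supplementary sketches contain genuine errors that would need fixing if you wanted them to stand on their own rather than as a gloss on the reference.

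For (3), the proposed shortcut ``(5) applied to the deformation retraction of $MC(f)$ onto $N$ \dots handles the cylinder directly'' has the arrow pointing the wrong way: that retraction exhibits $N$ as a retract of $MC(f)$, so (5) would let you deduce that $N$ is an AR \emph{from} $MC(f)$ being one --- the converse of what is needed. The union decomposition is also circular: any closed piece of $MC(f)$ containing $N$ is again (homeomorphic to) a mapping cylinder of $f$ rescaled, not a product of ARs, so (2) and (4) never bottom out. The same circularity afflicts $M \star N$, since its natural decomposition is into two cones over $M \times N$, i.e.\ two more mapping cylinders. The fact that mapping cylinders of maps between ANRs are ANRs is genuinely not a corollary of (2), (4), (5); it needs a separate argument (a local characterization or a Borsuk-type homotopy-extension argument), which is what the cited chapter supplies. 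For (7), the Kuratowski--Wojdys\l{}awski embedding into $C_b(X)$ is isometric, but its image is closed in the Banach space only when $X$ is complete; for arbitrary metrizable $X$ you need something like the Arens--Eells embedding (every metric space embeds isometrically as a \emph{closed} subset of a normed linear space), after which passing to the closed convex hull and invoking Dugundji is fine.
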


\subsection{Fine shape}
\label{def_fine_shape}

The notions of an approaching map and of an absolute retract are 
combined into the notion of fine shape based on 
the following fact~\cite[Lemma~2.13]{Melikhov2022F}:

\begin{lemma}\label{extend_map_to_fSh}
Let $X$ be a closed subset of any space $M$, and $Y$ be closed homotopy 
negligible in an AR $N$. Then

(1)Every map $f\colon X \to Y$ extends to a map $\bar{f}\colon M \to N$ 
such that $\bar{f}^{-1}(Y) = X$ (and therefore 
$\bar{f}|_{M\setminus X}$ is $X-Y$-approaching);

(2)For any homotopy $F\colon X \times [0,1] \to Y$ and any extensions 
$\bar{F_{0}},\bar{F_{1}}\colon M \to N$ of $F_{0}$ and $F_{1}$ such that 
$\bar{F_{0}}^{-1}(Y) = \bar{F_{1}}^{-1}(Y) = X$, there is an extension 
$\bar{F}\colon M \times [0,1] \to N$ such that 
$\bar{F}(-,0) = \bar{F_{0}}$, $\bar{F}(-,1) = \bar{F_{1}}$, and 
$\bar{F}^{-1}(Y) = X$.
\end{lemma}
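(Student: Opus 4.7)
The plan is to prove both parts by the same general method: first use the AR property of $N$ to obtain a crude extension, and then compose with the homotopy negligibility deformation of $Y$ in $N$, timed by a Urysohn-type function that vanishes exactly on the set where we need to preserve the original map.

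For (1), I would first appeal to the AR property of $N$: since $X$ is closed in $M$ and $f\colon X\to Y\subseteq N$, there exists an extension $g\colon M\to N$ with $g|_X = f$. The preimage $g^{-1}(Y)$ may, however, be strictly larger than $X$. To fix this, let $H\colon N\times[0,1]\to N$ be the homotopy witnessing homotopy negligibility of $Y$ in $N$, so $H_0 = id_N$ and $H(N\times(0,1])\subseteq N\setminus Y$. Using metrizability of $M$, pick a continuous function $\rho\colon M\to[0,1]$ with $\rho^{-1}(0) = X$ (a standard consequence of $X$ being closed in a metric space). Define $\bar f(m) := H(g(m),\rho(m))$. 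On $X$ we have $\rho=0$ and thus $\bar f(x) = g(x) = f(x)\in Y$; on $M\setminus X$ we have $\rho(m)>0$ and thus $\bar f(m)\in N\setminus Y$. Continuity is automatic, and $\bar f^{-1}(Y) = X$.

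For (2), the strategy is analogous but carried out one dimension higher. Consider the closed subset $A := (X\times[0,1])\cup(M\times\{0,1\})$ of $M\times[0,1]$ and the continuous map $\phi\colon A\to N$ defined piecewise by $\phi(x,t):=F(x,t)$ on $X\times[0,1]$, $\phi(m,0):=\bar F_0(m)$ on $M\times\{0\}$, and $\phi(m,1):=\bar F_1(m)$ on $M\times\{1\}$. These pieces agree on the intersections $X\times\{0,1\}$ because $\bar F_0$ and $\bar F_1$ extend $F_0$ and $F_1$. Since $N$ is an AR, $\phi$ extends to a map $G\colon M\times[0,1]\to N$. Now choose a continuous function $\rho\colon M\times[0,1]\to[0,1]$ with $\rho^{-1}(0) = A$; again this exists because $A$ is closed in the metrizable space $M\times[0,1]$. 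Set $\bar F(m,t) := H(G(m,t),\rho(m,t))$. On $A$ we have $\rho=0$, so $\bar F$ agrees with $G=\phi$ on $A$, giving the required boundary behavior and $F$-values on $X\times[0,1]\subseteq Y$. On $(M\setminus X)\times(0,1)$ we have $\rho>0$, so $\bar F$ lands in $N\setminus Y$. Hence $\bar F^{-1}(Y) = X\times[0,1]$ and the prescribed conditions at $t=0,1$ and on $X\times[0,1]$ hold on the nose.

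The only part that requires care is the construction of $\rho$ in step (2), where one must ensure that the zero set is exactly $A$ (not just containing it); this is precisely what is needed so that the pushoff does not disturb $\bar F_0$, $\bar F_1$, or $F$, yet genuinely moves the image off $Y$ everywhere else. Since $A$ is a closed subset of a metrizable space, one may take $\rho(m,t) := d((m,t),A)/(1+d((m,t),A))$ for any compatible metric, so there is no real obstacle — only a bookkeeping point to be checked. Once this is in place, both parts follow by the same two-step template: AR-extension, then homotopy-negligibility pushoff modulated by a distance-like function.
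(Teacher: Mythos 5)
Your proposal is correct and follows essentially the same two-step template as the paper's own proof: crude extension via the AR property, followed by a push-off using the homotopy negligibility homotopy $H$ timed by a Urysohn-type function vanishing exactly on the set where the map must be preserved, in both parts. The only cosmetic difference is that you spell out the Urysohn function via the metric; the paper simply invokes its existence.
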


\begin{proof}
(1)First we can extend $f$ to any map $f'\colon M \to N$, since $N$ is 
an AR. Then choose any homotopy $H\colon N \times [0,1] \to N$ such that 
$H_{0} = id_{N}$ and $H(N \times (0,1]) \subseteq N\setminus Y$, and any 
continuous function $h\colon M \to [0,1]$ such that $h^{-1}(0) = X$. 
From those we define $\bar{f}(m) := H_{h(m)}\circ f'(m)$. Then 
$\bar{f}(M\setminus X) \subseteq N\setminus Y$, and 
$\bar{f}|_{M\setminus X}$ is $X-Y$-approaching (as it actually extends 
on $X$ by a map into $Y$), as needed.

(2)$M \times \{0,1\} \cup X \times [0,1]$ is a closed subset of 
$M \times [0,1]$; we combine the maps $\bar{F}_{0}$, $\bar{F}_{1}$, and 
$F$ into a map of $M \times \{0,1\} \cup X \times [0,1]$ into $N$, then 
extend it to a map $\bar{F}'\colon M \times [0,1] \to N$. Now same as in 
1), choose any homotopy $H\colon N \times [0,1] \to N$ such that 
$H_{0} = id_{N}$ and $H(N \times (0,1]) \subseteq N\setminus Y$, and any 
continuous function $h\colon M \times [0,1] \to [0,1]$ such that 
$h^{-1}(0) = M \times \{0,1\} \cup X \times [0,1]$, and define 
$\bar{F}(m,t) := H_{h(m)}\circ\bar{F}'(m,t)$.
\end{proof}

Now fine shape is constructed from the following, which is clearly an 
equivalence relation:

\begin{definition}\label{def_fSh_class}
Let $X$ and $Y$ be any two spaces. Let $M$ and $M'$ be ARs, each 
containing $X$ as a closed homotopy negligible subset, whereas $N$ and 
$N'$ are ARs each containing $Y$ as such. For any approaching maps 
$\phi\colon M\setminus X \to N\setminus Y$ and 
$\psi\colon M'\setminus X \to N'\setminus Y$, we say that $\phi$ and 
$\psi$ {\it are of the same fine shape class} (from $X$ to $Y$) 
whenever there are some maps $\bar{id}_{X}\colon M \to M'$ and 
$\bar{id}_{Y}\colon N \to N'$, extending $id_{X}$ and $id_{Y}$, such 
that $\bar{id}_{X}^{-1}(X) = X$, $\bar{id}_{Y}^{-1}(Y) = Y$, and that 
there is an approaching homotopy (from $(M\setminus X) \times [0,1]$ to 
$N'\setminus Y$) between $\bar{id}_{Y}\circ\phi$ and 
$\psi\circ\bar{id}_{X}$, so that the following diagram commutes in 
approaching homotopy:
$$\xymatrix{
M\setminus X \ar[r]^{\phi} \ar[d]_{\bar{id}_{X}|_{M\setminus X}} & N\setminus Y \ar[d]^{\bar{id}_{Y}|_{N\setminus Y}} \\
M'\setminus X \ar[r]_{\psi} & N'\setminus Y
}$$
\end{definition}

By using Lemma~\ref{extend_map_to_fSh}(1), we can construct fine shape 
classes from ordinary maps:

\begin{definition}
Given a map $f\colon X \to Y$, the fine shape class (from $X$ to $Y$) 
{\it induced by $f$}, denoted $[f]_{fSh}$, is defined as follows: take 
any ARs $M$ and $N$ containing $X$ and $Y$ respectively as closed 
homotopy negligible subsets, extend $f$ to a map 
$\bar{f}\colon M\setminus X \to N\setminus Y$ such that 
$\bar{f}^{-1}(Y) = X$, and take the fine shape class of 
$\bar{f}|_{M\setminus X}$.
\end{definition}

There are, however, fine shape classes that are not induced by any 
ordinary maps. This is well-known already for strong shape of compacta; 
we mention a class of examples in Remark~\ref{fdr_nontrivial}.

By definition, every fine shape from $X$ to $Y$ is represented 
by some approaching map $\phi\colon M\setminus X \to N\setminus Y$ for 
some ARs $M$ and $N$ (containing $X$ and $Y$ respectively as closed 
homotopy negligible subsets). In fact, however, it can be represented 
for {\it any} such choice of ARs:

\begin{lemma}
Let $X$ and $Y$ be any two spaces, $M$ and $N$ be ARs containing $X$ and 
$Y$ respectively as closed homotopy negligible subsets, and 
$\phi\colon M\setminus X \to N\setminus Y$ be an approaching map. For 
any two other ARs $M'$ and $N'$ containing $X$ and $Y$ respectively as 
closed homotopy negligible subsets, there is an approaching map 
$\psi\colon M'\setminus X \to N'\setminus Y$ that is of the same fine 
shape class as $\phi$.
\end{lemma}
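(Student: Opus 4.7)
The idea is to transport $\phi$ from $M\setminus X$ to $M'\setminus X$ and from $N\setminus Y$ to $N'\setminus Y$ using extensions of identity maps produced by Lemma~\ref{extend_map_to_fSh}(1).

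First, apply Lemma~\ref{extend_map_to_fSh}(1) to the closed subspace $X \subset M'$ and the AR $M$ (with $X$ closed homotopy negligible in it) to extend $id_X$ to a map $\alpha\colon M' \to M$ with $\alpha^{-1}(X) = X$. Symmetrically, extend $id_X$ to $\bar{id}_X\colon M \to M'$ with $\bar{id}_X^{-1}(X) = X$, and $id_Y$ to $\bar{id}_Y\colon N \to N'$ with $\bar{id}_Y^{-1}(Y) = Y$. Define
\[
\psi := \bar{id}_Y|_{N\setminus Y} \circ \phi \circ \alpha|_{M'\setminus X} \colon M'\setminus X \to N'\setminus Y.
\]
The restriction $\alpha|_{M'\setminus X}$ is $X$--$X$-approaching (a sequence in $M'\setminus X$ converging to $x\in X$ is mapped by continuity of $\alpha$ to one converging to $\alpha(x)=x$, and $\alpha^{-1}(X)=X$ keeps values outside $X$); similarly $\bar{id}_Y|_{N\setminus Y}$ is $Y$--$Y$-approaching. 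Hence $\psi$, as a composition of approaching maps, is itself approaching.

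To show $\phi$ and $\psi$ are of the same fine shape class via Definition~\ref{def_fSh_class}, I use the chosen $\bar{id}_X,\bar{id}_Y$ as the witnessing maps, and must exhibit an approaching homotopy between $\bar{id}_Y\circ\phi$ and $\psi\circ\bar{id}_X = \bar{id}_Y\circ\phi\circ\alpha\circ\bar{id}_X$ viewed as approaching maps $M\setminus X \to N'\setminus Y$. Observe that both $id_M$ and $\alpha\circ\bar{id}_X$ are maps $M\to M$ extending $id_X$, each with preimage of $X$ equal to $X$. Applying Lemma~\ref{extend_map_to_fSh}(2) to the constant homotopy $X\times[0,1]\to X$, $(x,t)\mapsto x$, together with these two extensions, produces $H\colon M\times[0,1]\to M$ with $H_0 = id_M$, $H_1 = \alpha\circ\bar{id}_X$, and $H^{-1}(X) = X\times[0,1]$. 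Then set
\[
\Phi(m,t) := \bar{id}_Y\bigl(\phi(H(m,t))\bigr),\quad (m,t)\in (M\times[0,1])\setminus (X\times[0,1]).
\]

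The main verification is that $\Phi$ is an $(X\times[0,1])$--$Y$-approaching map. If $(m_i,t_i)\to (x,t)$ with $x\in X$, continuity of $H$ and the identity $H^{-1}(X) = X\times[0,1]$ imply $H(m_i,t_i)\in M\setminus X$ and $H(m_i,t_i)\to H(x,t)\in X$; the approaching property of $\phi$ yields a subsequence of $\phi(H(m_i,t_i))$ converging to some $y\in Y$; and continuity of $\bar{id}_Y$ at $y$ with $\bar{id}_Y(y)=y$ converts this into convergence of the corresponding subsequence of $\Phi$ to $y\in Y$. Since $\Phi_0 = \bar{id}_Y\circ\phi$ and $\Phi_1 = \psi\circ\bar{id}_X$, the diagram of Definition~\ref{def_fSh_class} commutes up to approaching homotopy, finishing the proof. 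I expect no deep obstacle: the argument is a bookkeeping application of the two parts of Lemma~\ref{extend_map_to_fSh}, and the only step requiring a touch of care is the last check that $\Phi$ inherits the approaching property from $\phi$ through $H$, which depends crucially on the preimage condition $H^{-1}(X) = X\times[0,1]$ built in by Lemma~\ref{extend_map_to_fSh}(2).
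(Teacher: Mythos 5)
Your proof is correct and follows the same core idea as the paper's (one-line) proof: transport $\phi$ by extensions of $id_X$ and $id_Y$ produced via Lemma~\ref{extend_map_to_fSh}(1), and observe the resulting map has the same fine shape class. The paper states only the definition of $\psi = \bar{id}_Y\circ\phi\circ\alpha$ (with $\alpha\colon M'\to M$ denoted $\bar{id}_X$ there) and declares the conclusion; you have simply supplied the verification of Definition~\ref{def_fSh_class} that the paper leaves implicit, introducing the second extension $\bar{id}_X\colon M\to M'$ in the correct direction and using Lemma~\ref{extend_map_to_fSh}(2) to build the required approaching homotopy between $\bar{id}_Y\circ\phi$ and $\psi\circ\bar{id}_X$ — which is exactly the step the paper's terse proof presupposes.
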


\begin{proof}
Take some maps $\bar{id}_{X}\colon M' \to M$ and 
$\bar{id}_{Y}\colon N \to N'$ extending $id_{X}$ and $id_{Y}$ 
respectively and such that $\bar{id}_{X}^{-1}(X) = X$ and 
$\bar{id}_{Y}^{-1}(Y) = Y$. Then the map 
$\bar{id}_{Y}\circ\phi\circ\bar{id}_{X}\colon M'\setminus X \to N'\setminus Y$ 
is of the same fine shape class as $\phi$.
\end{proof}

\begin{corollary}\label{fSh_composable}
Fine shape classes are composable: a fine shape class from $X$ to $Y$, 
represented by $\phi\colon M\setminus X \to N\setminus Y$, and a fine 
shape class from $Y$ to $Z$, represented by 
$\psi\colon N'\setminus Y \to L'\setminus Z$, compose through taking any 
$\phi'\colon M\setminus X \to N'\setminus Y$ of the same fine shape 
class as $\phi$ and taking the fine shape class of $\psi\circ\phi'$, or 
by taking any map $\psi'\colon N\setminus Y \to L'\setminus Z$ of the 
same fine shape class as $\psi$ and taking the fine shape class of 
$\psi'\circ\phi$ --- the two compositions are of the same fine shape 
class from $X$ to $Z$.
\end{corollary}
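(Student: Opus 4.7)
The plan is to reduce everything to an auxiliary observation: any two maps $\alpha, \beta \colon N \to N'$ that both extend $id_{Y}$ and satisfy $\alpha^{-1}(Y) = \beta^{-1}(Y) = Y$ restrict to approaching-homotopic maps on $N\setminus Y$. I would prove this by applying Lemma~\ref{extend_map_to_fSh}(2) to the constant homotopy $F \colon Y \times [0,1] \to Y$ at $id_{Y}$, with boundary extensions $\bar{F}_{0} := \alpha$ and $\bar{F}_{1} := \beta$; the resulting $\bar{F} \colon N \times [0,1] \to N'$ has $\bar{F}^{-1}(Y) = Y \times [0,1]$ and so restricts to an approaching homotopy on $(N\setminus Y) \times [0,1]$ between $\alpha|_{N\setminus Y}$ and $\beta|_{N\setminus Y}$.

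Next I would use this observation to construct both candidate compositions at once. Concretely, pick any extension $\bar{id}_{Y} \colon N \to N'$ of $id_{Y}$ preserving $Y$-preimage; by the construction in the proof of the preceding lemma, $\phi' := \bar{id}_{Y}\circ\phi$ is of the same fine shape class as $\phi$, and $\psi' := \psi\circ\bar{id}_{Y}$ is of the same fine shape class as $\psi$. For these specific choices, $\psi\circ\phi'$ and $\psi'\circ\phi$ are literally the same approaching map $\psi\circ\bar{id}_{Y}\circ\phi \colon M\setminus X \to L'\setminus Z$, so both approaches yield the same candidate for the composition.

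It remains to check that the fine shape class of this composition does not depend on the choice of $\phi'$ (and, by symmetry, of $\psi'$). Suppose $\phi_{1}', \phi_{2}' \colon M\setminus X \to N'\setminus Y$ are both representatives of the fine shape class of $\phi$; by transitivity and Definition~\ref{def_fSh_class}, there exist $\bar{id}_{X}\colon M \to M$ and $\bar{id}_{Y}\colon N' \to N'$ (extending $id_{X}, id_{Y}$ with correct preimages) such that $\bar{id}_{Y}\circ\phi_{1}'$ is approaching-homotopic to $\phi_{2}'\circ\bar{id}_{X}$. Post-composing with $\psi$ and using the auxiliary observation on $\bar{id}_{Y}$ and $id_{N'}$ to get $\psi\circ\bar{id}_{Y}$ approaching-homotopic to $\psi$, we conclude that $\psi\circ\phi_{1}'$ is approaching-homotopic to $\psi\circ\phi_{2}'\circ\bar{id}_{X}$; this witnesses that $\psi\circ\phi_{1}'$ and $\psi\circ\phi_{2}'$ are of the same fine shape class $X \to Z$ (with $\bar{id}_{X}$ on the $X$-side and $id_{L'}$ on the $Z$-side). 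The symmetric argument handles varying $\psi'$. The whole proof is essentially two applications of Lemma~\ref{extend_map_to_fSh}(2), and the only real subtlety lies in keeping careful track of which diagrams commute up to which approaching homotopies.
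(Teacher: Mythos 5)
Your proof is correct and fills in, in the natural way, the details the paper leaves implicit when stating this as a corollary of the preceding lemma. The auxiliary observation (any two extensions of $id_{Y}$ to maps $N \to N'$ with $Y$-preimage equal to $Y$ restrict to approaching-homotopic maps on $N\setminus Y$, by Lemma~\ref{extend_map_to_fSh}(2) applied to the constant homotopy) neatly packages the key technical point, and the simultaneous choice $\phi' := \bar{id}_{Y}\circ\phi$, $\psi' := \psi\circ\bar{id}_{Y}$ handles the agreement of the two candidate compositions efficiently.
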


Thus fine shape can be defined from $X$ to $Y$ without relying on any 
specific choice of spaces containing them; this is what differentiates 
fine shape from approaching maps.

By calling onto Lemma~\ref{extend_map_to_fSh} again, we easily see that 
fine shape is weaker than homotopy:

\begin{proposition}\label{extend_homotopy_to_fSh}
The fine shape class $[f]_{fSh}$ depends only on the homotopy class $[f]$. 
Composition of maps, or of homotopy classes, induces composition of the 
corresponding fine shape classes.
\end{proposition}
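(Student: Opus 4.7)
The plan is to treat the two assertions sequentially, relying almost entirely on Lemma~\ref{extend_map_to_fSh} and Corollary~\ref{fSh_composable}.

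First, for homotopy invariance: I fix once and for all ARs $M$ and $N$ containing $X$ and $Y$ respectively as closed homotopy negligible subsets, so that both $[f]_{fSh}$ and $[g]_{fSh}$ can be represented inside these same ARs. Given $f \simeq g$ via a homotopy $F\colon X \times [0,1] \to Y$, I apply Lemma~\ref{extend_map_to_fSh}(1) to get extensions $\bar{f}, \bar{g}\colon M \to N$ with $\bar{f}^{-1}(Y) = \bar{g}^{-1}(Y) = X$, and then Lemma~\ref{extend_map_to_fSh}(2) to extend $F$ itself to $\bar{F}\colon M \times [0,1] \to N$ (matching $\bar{f}$ and $\bar{g}$ at the endpoints) with $\bar{F}^{-1}(Y) = X \times [0,1]$. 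The restriction $\bar{F}|_{(M \times [0,1]) \setminus (X \times [0,1])}$ is then an approaching homotopy from $\bar{f}|_{M \setminus X}$ to $\bar{g}|_{M \setminus X}$. Feeding this into Definition~\ref{def_fSh_class} with $\bar{id}_{X} = id_{M}$ and $\bar{id}_{Y} = id_{N}$ (which trivially satisfy the hypotheses) shows $[f]_{fSh} = [g]_{fSh}$.

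A side benefit of the same argument, applied to the constant homotopy $F(x,t) = f(x)$, is that $[f]_{fSh}$ does not depend on the particular extension $\bar{f}$ chosen in its construction; so the notation $[f]_{fSh}$ is well-defined and passes to the quotient $[X,Y] \to [X,Y]_{fSh}$.

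Next, for compositional compatibility, take $f\colon X \to Y$ and $g\colon Y \to Z$, and pick ARs $M \supseteq X$, $N \supseteq Y$, $L \supseteq Z$ as closed homotopy negligible subsets. Extend $f$ to $\bar{f}\colon M \to N$ with $\bar{f}^{-1}(Y) = X$ and $g$ to $\bar{g}\colon N \to L$ with $\bar{g}^{-1}(Z) = Y$. Then $\bar{g}\bar{f}\colon M \to L$ extends $gf$ and satisfies $(\bar{g}\bar{f})^{-1}(Z) = \bar{f}^{-1}(\bar{g}^{-1}(Z)) = \bar{f}^{-1}(Y) = X$, so $\bar{g}\bar{f}|_{M\setminus X}$ represents $[gf]_{fSh}$. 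But the very same approaching map is, by Corollary~\ref{fSh_composable}, a representative of $[g]_{fSh} \circ [f]_{fSh}$, whence equality. Combined with the first part, composition of homotopy classes therefore induces a well-defined composition of fine shape classes.

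There is really no substantial obstacle here: the proposition is essentially a bookkeeping consequence of the extension lemma, once one notices that Definition~\ref{def_fSh_class} collapses to ordinary approaching homotopy when $M = M'$ and $N = N'$ are chosen identical on both sides. The only point that needs care is keeping track of the preimage condition $\bar{f}^{-1}(Y) = X$ through all constructions, since this is what ensures that the restrictions to complements of $X$ (and later $X \times [0,1]$) are approaching maps in the sense of Definition~\ref{def_fSh_class}.
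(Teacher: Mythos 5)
Your proof is correct and follows precisely the route the paper gestures at: the paper simply says ``by calling onto Lemma~\ref{extend_map_to_fSh} again, we easily see,'' and does not spell out the argument, whereas you fill in the details. Your reduction of the homotopy-invariance claim to Lemma~\ref{extend_map_to_fSh}(2) with identity extensions in Definition~\ref{def_fSh_class}, your observation that the constant homotopy gives well-definedness with respect to the chosen extension $\bar{f}$, and your use of matched ARs so that Corollary~\ref{fSh_composable} applies directly to $\bar g\bar f|_{M\setminus X}$, are all exactly the intended bookkeeping and are sound.
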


With all the pieces in place, we introduce

\begin{definition}\label{def_fSh_M}
The {\it fine shape category} $\mathrm{fSh(M)}$ is defined as follows:

(1)Its objects are the objects of $\mathrm{M}$ (metrizable topological spaces);

(2)Morphisms from $X$ to $Y$ are the fine shape classes from $X$ to $Y$;

(3)Composition is given by Corollary~\ref{fSh_composable};

(4)The identity morphism for a space $X$ is given by $[id_{X}]_{fSh}$, 
the fine shape class of $id_{X}$.
\end{definition}

Now Proposition~\ref{extend_homotopy_to_fSh} has the following

\begin{corollary}
There is a functor from $\mathrm{hM}$ to $\mathrm{fSh(M)}$ that is 
constant on objects and sends every homotopy class to the sole fine 
shape class induced by it.
\end{corollary}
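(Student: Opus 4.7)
The plan is to simply assemble the functor from the data already provided by Proposition~\ref{extend_homotopy_to_fSh} and Definition~\ref{def_fSh_M}. Define $F\colon \mathrm{hM} \to \mathrm{fSh(M)}$ on objects by $F(X) := X$ (so it is constant on objects in the sense stated), and on morphisms by $F([f]) := [f]_{fSh}$ for any representative $f\colon X \to Y$.

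First I would check that $F$ is well-defined on morphisms. If $[f] = [g]$ in $\mathrm{hM}$, then $f \simeq g$, and Proposition~\ref{extend_homotopy_to_fSh} states precisely that $[f]_{fSh}$ depends only on $[f]$; hence $[f]_{fSh} = [g]_{fSh}$, and the assignment is independent of the choice of representative. Moreover, the codomain of $F([f])$ is indeed the set of morphisms from $X$ to $Y$ in $\mathrm{fSh(M)}$, by Definition~\ref{def_fSh_M}(2).

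Next I would verify functoriality. Preservation of composition is the second half of Proposition~\ref{extend_homotopy_to_fSh}: for $[f]\colon X \to Y$ and $[g]\colon Y \to Z$ in $\mathrm{hM}$, composition of the representing maps induces composition of the fine shape classes, so $F([g]\circ[f]) = F([g \circ f]) = [g\circ f]_{fSh} = [g]_{fSh}\circ[f]_{fSh} = F([g])\circ F([f])$, where the composition on the right is the one given by Corollary~\ref{fSh_composable} and promoted to the identity-via Definition~\ref{def_fSh_M}(3). Preservation of identities is immediate from Definition~\ref{def_fSh_M}(4): $F([id_X]) = [id_X]_{fSh}$, which is by definition the identity of $X$ in $\mathrm{fSh(M)}$.

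There is no real obstacle: every ingredient is either stated in the cited proposition or baked into the definition of $\mathrm{fSh(M)}$, so the proof is a one-line invocation of Proposition~\ref{extend_homotopy_to_fSh} together with Definition~\ref{def_fSh_M}. The only thing one might pause on is checking that the codomains match --- that $[f]_{fSh}$ really is a morphism from $X$ to $Y$ in $\mathrm{fSh(M)}$ and not, say, a class depending on auxiliary ARs --- but this is exactly what the preceding lemma and Corollary~\ref{fSh_composable} have just established.
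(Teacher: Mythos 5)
Your proof is correct and is exactly the unwinding the paper has in mind: the corollary is stated without proof immediately after Proposition~\ref{extend_homotopy_to_fSh}, which the paper treats as the only ingredient needed, and you check well-definedness, composition, and identities from precisely that proposition and Definition~\ref{def_fSh_M}.
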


\begin{notation}
For an approaching map $\phi\colon M\setminus X \to N\setminus Y$, we 
shall use $[\phi]$ to denote the fine shape class from $X$ to $Y$ 
defined by $\phi$. For an actual map $f\colon X \to Y$, we shall use 
$[f]_{fSh}$ to denote the fine shape class (again from $X$ to $Y$) 
defined by $f$, or by its homotopy class $[f]$. The set of all fine 
shape classes from $X$ to $Y$, we denote by $[X,Y]_{fSh}$.
\end{notation}

We finish this subsection by discussing maps of $\mathrm{M}$ that induce 
isomorphisms in $\mathrm{fSh(M)}$:

\begin{definition}
A map $f\colon X \to Y$ is called a {\it fine shape equivalence} 
whenever $[f]_{fSh}$ is an isomorphism.
\end{definition}

In regards to our search for a model structure on $\mathrm{M}$ for which 
$\mathrm{fSh(M)}$ is the homotopy category, fine shape equivalences must 
clearly be the weak equivalences. To that end we state the obvious

\begin{proposition}\label{fSh_eq_props}
(1)Fine shape equivalences satisfy the 2-out-of-3-property: for any maps 
$f\colon X \to Y$ and $g\colon Y \to Z$, if any two of the maps $f$, 
$g$, and $gf$ are fine shape equivalences, then so is the third;

(2)Fine shape equivalences are closed under retracts: for any 
commutative diagram
$$\xymatrix{
A \ar[r]_{i} \ar@/^/[rr]^{id_{A}} \ar[d]_{g} & X \ar[d]_{f} \ar[r]_{r} & A \ar[d]^{g} \\
B \ar[r]^{j} \ar@/_/[rr]_{id_{B}} & Y \ar[r]^{s} & B,
}$$
\noindent if $f$ is a fine shape equivalence, then so is $g$.
\end{proposition}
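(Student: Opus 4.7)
The plan is to observe that both claims are purely categorical facts about isomorphisms transported through the functor $\mathrm{hM} \to \mathrm{fSh(M)}$ from the preceding corollary; the real content is already encoded in Proposition~\ref{extend_homotopy_to_fSh}, which tells us that sending $f$ to $[f]_{fSh}$ respects composition. A fine shape equivalence is by definition a map $f$ whose image $[f]_{fSh}$ is an isomorphism in $\mathrm{fSh(M)}$, so once we translate each diagram into $\mathrm{fSh(M)}$, the statements reduce to: (1) isomorphisms satisfy 2-out-of-3 in any category, and (2) isomorphisms are closed under retracts in any category.

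For part (1), I would start by applying $[-]_{fSh}$ to the diagram $X \xrightarrow{f} Y \xrightarrow{g} Z$ to obtain $[gf]_{fSh} = [g]_{fSh} \circ [f]_{fSh}$ in $\mathrm{fSh(M)}$. Then the standard argument applies: if two of $[f]_{fSh}$, $[g]_{fSh}$, $[gf]_{fSh}$ are invertible, compose inverses appropriately to produce the inverse of the third. For example, if $[f]_{fSh}$ and $[gf]_{fSh}$ are invertible, then $[g]_{fSh} = [gf]_{fSh} \circ [f]_{fSh}^{-1}$ has inverse $[f]_{fSh} \circ [gf]_{fSh}^{-1}$; the other two cases are symmetric.

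For part (2), applying $[-]_{fSh}$ to the retract diagram yields a retract diagram in $\mathrm{fSh(M)}$ with $[f]_{fSh}$ an isomorphism. The explicit inverse of $[g]_{fSh}$ is then $[r]_{fSh} \circ [f]_{fSh}^{-1} \circ [j]_{fSh}$: using $[s]_{fSh} \circ [f]_{fSh} = [g]_{fSh} \circ [r]_{fSh}$, $[f]_{fSh} \circ [i]_{fSh} = [j]_{fSh} \circ [g]_{fSh}$, $[s]_{fSh} \circ [j]_{fSh} = [id_{B}]_{fSh}$, and $[r]_{fSh} \circ [i]_{fSh} = [id_{A}]_{fSh}$, the two composition checks go through in three lines each.

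There is no real obstacle here: the entire proposition is an instance of general categorical bookkeeping. The only thing one must be careful about is to invoke Proposition~\ref{extend_homotopy_to_fSh} (or equivalently the functor $\mathrm{hM} \to \mathrm{fSh(M)}$) at the start so that compositions and commuting diagrams of ordinary maps are known to pass to compositions and commuting diagrams in $\mathrm{fSh(M)}$; once that is done, the arguments above are identity manipulations that work in any category. The proof can therefore be kept to a few lines.
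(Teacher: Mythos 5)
Your proof is correct and matches the approach the paper tacitly intends: the paper states this proposition without proof, prefaced by ``we state the obvious,'' precisely because once one knows that $f \mapsto [f]_{fSh}$ is functorial (Proposition~\ref{extend_homotopy_to_fSh} and the corollary preceding the statement), both parts reduce to the standard facts that isomorphisms in any category satisfy 2-out-of-3 and are closed under retracts. Your explicit verification of the inverse $[r]_{fSh}\circ[f]_{fSh}^{-1}\circ[j]_{fSh}$ in part~(2) is exactly the computation the paper elides.
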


\subsection{Left fraction localization}
\label{def_left_fractions}

In general, a category can be localized at a selected class of 
its morphisms by allowing to invert each morphism in this class. Under 
some conditions on the selected class, the 
localization may have a simple representation. In the following, we make 
use of a specific case of this, the left fraction localization, as used 
by Cathey~\cite[Theorem (1.10)]{Cathey1981}, which 
references~\cite{Gabriel1967} and~\cite{Schubert1972}:

\begin{definition}\label{def_Ore}
Assume $\mathrm{C}$ is a category, and $\Sigma$ is a class of morphisms 
of $\mathrm{C}$. We say that $\Sigma$ {\it satisfies the left 
invertibility conditions} (sometimes called the left Ore conditions) if:

(1)$\Sigma$ contains all isomorphisms and is closed under morphism 
composition;

(2)For any morphisms $i\colon A \to X$ and $u\colon A \to Y$ with 
$i \in \Sigma$, there exists an object $Z$ along with morphisms 
$j\colon Y \to Z$ and $v\colon X \to Z$ such that $ju = vi$ and 
$j \in \Sigma$:

$$\xymatrix{
A \ar[r]^{u} \ar[d]|{\circ}_{i} & Y \ar@{.>}[d]|{\circ}^{j} \\
X \ar@{.>}[r]_{v} & Z
}$$

(3)For any morphisms $u,v\colon X \to Y$ such that $ui = vi$ for some 
$i\colon A \to X$ with $i \in \Sigma$, there exist an object $Z$ and 
a morphism $j\colon Y \to Z$ such that $ju = jv$ and $j \in \Sigma$:

$$\xymatrix{
A \ar[r]|{\circ}^{i} & X \ar@/^/[r]^{u} \ar@/_/[r]_{v} & Y \ar@{.>}[r]|{\circ}^{j} & Z
}$$
\end{definition}

Here and below in the commutative diagrams we adopt the following

\begin{convention}
All diagrams we include are to be either assumed or proved commutative. 
In any diagram used to illustrate a property, solid arrows are assumed 
to exist, dotted arrows must be shown to exist, and arrows with circles 
on them belong to the select class of morphisms satisfying the left 
invertibility conditions; an object is assumed to exist if it has at 
least one solid arrow entering or exiting it, and must be shown to exist 
otherwise.
\end{convention}

\begin{definition}\label{def_left_cat}
Given a category $\mathrm{C}$ and a class of its morphisms $\Sigma$ 
satisfying the left invertibility conditions, the 
{\it category of left fractions} $\Sigma \backslash \mathrm{C}$ is 
defined as follows:

(1)Its objects are the objects of $\mathrm{C}$; 

(2)A morphism from $X$ to $Y$ is an equivalence class of cospans 
$X \xrightarrow{u} P \xleftarrow{i} Y$ (also denoted by 
$i\backslash u$), where $u$ (the ``numerator'') and $i$ 
(the ``denominator'') are morphisms of $\mathrm{C}$ with $i \in \Sigma$; 

(3)Two cospans $X \xrightarrow{u} P \xleftarrow{i} Y$ and 
$X \xrightarrow{v} Q \xleftarrow{j} Y$ belong to the same equivalence 
class precisely when there is a cospan 
$P \xrightarrow{f} R \xleftarrow{g} Q$ of morphisms of $\mathrm{C}$ such 
that $fu = gv$, $fi = gj$, and the latter morphism is in $\Sigma$:

$$\xymatrix{
 & P \ar@{.>}[drr]^{f} & & \\
X \ar[ur]^{u} \ar[dr]_{v} & & Y \ar[ul]|{\circ}^{i} \ar[dl]|{\circ}_{j} \ar@{.>}[r]|{\circ} & R \\
 & Q \ar@{.>}[urr]_{g} & & \\
}$$

\noindent (in other words, 
$i\backslash u = fi\backslash fu = gj\backslash gv = j\backslash v$ 
whenever $fu = gv$ and $fi = gj$; note that neither $f$ nor $g$ has to 
be in $\Sigma$ here);

(4)The composition of two classes represented by cospans 
$X \xrightarrow{u} P \xleftarrow{i} Y$ and 
$Y \xrightarrow{v} Q \xleftarrow{j} Z$ can be obtained by choosing 
any cospan $P \xrightarrow{w} R \xleftarrow{k} Q$ with $wi = kv$ 
and $k \in \Sigma$ (which exists by the left invertibility 
condition~(2)), and taking the class represented by the cospan 
$X \xrightarrow{wu} R \xleftarrow{kj} Z$:

$$\xymatrix{
X \ar[dr]^{u} & & Y \ar[dl]|{\circ}_{i} \ar[dr]^{v} & & Z \ar[dl]|{\circ}_{j} \\
 & P \ar[dr]^{w} & & Q \ar[dl]|{\circ}_{k} & \\
 & & R & &
}$$
\noindent (in other words, 
$(j\backslash v)\circ(i\backslash u) = (kj\backslash kv)\circ(wi\backslash wu) = kj\backslash wu$; 
any suitable $k$ and $w$ will give the same composition class by 
condition~(3), and $kj \in \Sigma$ by condition~(1));

(5)The idenity isomorphism for an object $X$ is the class of the 
fraction $X \xrightarrow{id_{X}} X \xleftarrow{id_{X}} X$ (where 
$id_{X}$ is the identity in $\mathrm{C}$, and $id_{X} \in \Sigma$ by 
condition~(1)).
\end{definition}

\begin{convention}
Here and later we write a specific fraction to mean its equivalence 
class (similar to how fractions are used in arithmetic to represent 
rational numbers).
\end{convention}

We also state the universal property of the left fraction category:

\begin{proposition}\label{left_universal}
The left fraction category $\Sigma\backslash\mathrm{C}$, whenever it 
exists, is uniquely characterized by the following: 

(1)There is a functor 
$P\colon \mathrm{C} \to \Sigma\backslash\mathrm{C}$ 
acting on a morphism $f\colon X \to Y$ by $P(f) = id_{Y}\backslash f$; 

(2)$P$ sends $\Sigma$ into the class of isomorphisms: if 
$i\colon X \to Y$ is in $\Sigma$, then $id_{Y}\backslash i$ has an 
inverse $i\backslash id_{X}$;

(3)For any category $\mathrm{D}$ and any functor 
$F\colon \mathrm{C} \to \mathrm{D}$ sending $\Sigma$ into the class of 
isomorphisms in $\mathrm{D}$, there is a unique functor 
$Q\colon \Sigma\backslash\mathrm{C} \to \mathrm{D}$ (acting by 
$Q(i\backslash u) = (Fi)^{-1}\circ(Fu)$) such that $F = QP$. 

In short, $P$ is the universal functor out of $\mathrm{C}$ 
that sends all morphisms of $\Sigma$ into invertible morphisms.
\end{proposition}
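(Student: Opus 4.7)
The proof splits into three parts matching items (1)--(3); uniqueness of $\Sigma\backslash\mathrm{C}$ itself then follows from the universal property by standard category theory. For (1), I would verify that $P(f) := id_Y \backslash f$ is functorial: $P(id_X) = id_X \backslash id_X$ is the identity morphism by item~(5) of Definition~\ref{def_left_cat}, and for composition, given $f\colon X \to Y$ and $g\colon Y \to Z$, I apply the composition formula of Definition~\ref{def_left_cat}(4) to the cospans $X \xrightarrow{f} Y \xleftarrow{id_Y} Y$ and $Y \xrightarrow{g} Z \xleftarrow{id_Z} Z$ with the trivial fill-in $w = g$, $k = id_Z$ (in $\Sigma$ by Definition~\ref{def_Ore}(1)), giving $id_Z \backslash gf = P(gf)$. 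For (2), given $i\colon X \to Y$ in $\Sigma$, I would exhibit the inverse of $P(i)$ as the cospan $Y \xrightarrow{id_Y} Y \xleftarrow{i} X$, a legitimate fraction since $i \in \Sigma$. Using the composition formula with the trivial fill-in $w = k = id_Y$, the composite of $P(i)$ with this inverse reduces directly to $id_Y \backslash id_Y$, while the reverse composite gives $i \backslash i$, which is equivalent to $id_X \backslash id_X$ via the bridging cospan $Y \xrightarrow{id_Y} Y \xleftarrow{i} X$.

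The heart of the argument is (3). I would define $Q$ on objects by $F$ and on morphisms by the forced formula $Q(i \backslash u) := F(i)^{-1} \circ F(u)$, well-defined on individual fractions since $F(i)$ is invertible by hypothesis on $F$. The central check is well-definedness on equivalence classes: given $fu = gv$, $fi = gj$, and $fi \in \Sigma$, the morphism $F(fi) = F(f)F(i)$ is invertible and hence cancellable; writing $F(f) = F(fi)F(i)^{-1}$ and $F(g) = F(fi)F(j)^{-1}$ and substituting into $F(f)F(u) = F(g)F(v)$ gives $F(i)^{-1}F(u) = F(j)^{-1}F(v)$ after cancelling $F(fi)$. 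Functoriality of $Q$ then uses the composition formula: if $wi = kv$ with $k \in \Sigma$, invertibility of $F(k)$ together with $F(w)F(i) = F(k)F(v)$ yields $F(k)^{-1}F(w) = F(v)F(i)^{-1}$, whence
$Q(kj \backslash wu) = F(j)^{-1} F(k)^{-1} F(w) F(u) = F(j)^{-1} F(v) F(i)^{-1} F(u) = Q(j \backslash v) \circ Q(i \backslash u)$. Compatibility $QP = F$ is immediate from $QP(f) = F(id_Y)^{-1} F(f) = F(f)$.

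For uniqueness of $Q$, any functor $Q'$ with $Q'P = F$ must send $P(i)$ to $F(i)$ and hence $P(i)^{-1}$ to $F(i)^{-1}$. Since any fraction decomposes as $i \backslash u = P(i)^{-1} \circ P(u)$ in $\Sigma \backslash \mathrm{C}$ (a direct application of the composition formula with $w = k = id_P$, where $P$ is the apex of the cospan), $Q'$ is forced to agree with $Q$ on all fractions. The main obstacle I anticipate is the well-definedness check for $Q$: it rests critically on the cancellability of the isomorphism $F(fi)$, which is guaranteed only because $fi \in \Sigma$; everything else reduces to formal manipulation of cospans using the left invertibility conditions, and is essentially bookkeeping.
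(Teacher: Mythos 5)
Your proof is correct throughout; I verified each step, including the trivial fill-ins $w=g,\ k=id_Z$ for $P(gf)$, the two composite computations yielding $id_Y\backslash id_Y$ and $i\backslash i$, the bridging cospan $Y\xrightarrow{id_Y}Y\xleftarrow{i}X$ showing $i\backslash i\sim id_X\backslash id_X$, the cancellation of $F(fi)$ in the well-definedness check, the cancellation of $F(k)$ in the functoriality check, and the decomposition $i\backslash u = P(i)^{-1}\circ P(u)$ for uniqueness. Note, however, that the paper does not give a proof of this proposition at all: it appears in the preliminaries section, which explicitly ``collates for reference'' previously established results (here citing Gabriel--Zisman and Schubert via Cathey), so there is no paper proof to compare against. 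Your argument is essentially the standard one in the calculus-of-fractions literature, carried out directly from Definitions~\ref{def_Ore} and~\ref{def_left_cat}, and it correctly identifies the one nontrivial point --- that well-definedness on equivalence classes hinges on $F(fi)$ being invertible, which is why the condition $fi\in\Sigma$ in Definition~\ref{def_left_cat}(3) is indispensable.
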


\section{FDR-embeddings}
\label{sec_fdr}

As Cathey~\cite{Cathey1981} starts by defining what he called SSDR-maps, 
so do we follow by defining the fine shape version of those.

\begin{definition}\label{define_fdr}
Let $A$ be a closed subset of a space $X$, and assume that:

\begin{itemize}
  \item there exists an AR $M$ containing $X$ as a closed homotopy 
  negligible subset;
  \item there exists a closed subset $L$ of $M$ such that $L$ is an AR, 
  $L \cap X = A$, and $A$ is homotopy negligible in $L$;
  \item there exists an $(X \times [0,1])-X$-approaching map 
  $\Phi \colon (M \setminus X) \times [0,1] \to M \setminus X$ such that 
  $\Phi_{0} = id_{M \setminus X}$, 
  $\Phi_{1}(M \setminus X) = L \setminus A$, and 
  $\Phi_{t}|_{L \setminus A} = id_{L \setminus A}$ for all 
  $t \in [0,1]$.
\end{itemize}

In particular, considering $\Phi_{1}$ as an approaching map from 
$M\setminus X$ to $L\setminus A$, the fine shape class $[\Phi_{1}]$ 
(from $X$ to $A$) is the inverse to the fine shape class given by the 
embedding $A \subseteq X$ (which readily extends to the embedding 
$L \subseteq M$). In this case we shall say that the fine shape class 
$[\Phi]$ is a {\it fine shape strong deformation retraction} of $X$ on 
$A$. We shall also say that $A$ is a 
{\it fine shape strong deformation retract} of $X$, the inclusion 
$A \subseteq X$ is an {\it FDR-embedding} (which is simply a shorthand 
for ``embedding of a fine shape strong deformation retract''), 
and $\Phi$ is an {\it approaching strong deformation retraction} 
representing the fine shape strong deformation retraction $[\Phi]$.
\end{definition}

\begin{example}\label{fdr_examples}
An inclusion $X \times \{0\} \subset X \times [0,1]$ is clearly an 
FDR-embedding for any $X$.  Lemma~\ref{T_fdr} below provides a 
larger class of examples.
\end{example}

The rest of this section is concerned with the properties of 
FDR-embeddings and other statements that we will need to use them. 
Some of these simply extend results for SSDR-maps given by Cathey.

\begin{proposition}\label{H_extend}
Let $M$ be an AR, $L$ and $X$ closed subsets of $M$ with $L \cap X = A$. 
Assume there are homotopies $H \colon M \times [0,1] \to M$ and 
$G \colon L \times [0,1] \to L$ such that $H_{0} = id_{M}$, 
$G_{0} = id_{L}$, $H(M \times (0,1]) \subseteq M\setminus X$, and 
$G(L \times (0,1]) \subseteq L\setminus A$. Then there is a homotopy 
$F \colon M \times [0,1] \to M$ with $F_{0} = id_{M}$, 
$F(M \times (0,1]) \subseteq M\setminus X$, and 
$F|_{L \times [0,1]} = G$ (so in particular 
$F(L \times (0,1]) \subseteq L\setminus A$).
\end{proposition}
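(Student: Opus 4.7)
The plan is to combine $id_M$ (on $M \times \{0\}$) with $G$ (on $L \times [0,1]$) into a single continuous map on the closed subset
\[
C := (M \times \{0\}) \cup (L \times [0,1]) \subseteq M \times [0,1],
\]
then extend to all of $M \times [0,1]$, and finally compose with $H$ to push everything off $X$ in a way that does not disturb what happens on $C$. This is the same two-stage pattern (extend, then slide off) already used in the proof of Lemma~\ref{extend_map_to_fSh}.

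First I would check that $id_M$ and $G$ agree on $M \times \{0\} \cap L \times [0,1] = L \times \{0\}$ (both give $id_L$), so they glue into a continuous map $F' \colon C \to M$. Since $C$ is closed in $M \times [0,1]$ and $M$ is an AR, $F'$ extends to a continuous map $\tilde{F} \colon M \times [0,1] \to M$. Next, using metrizability of $M \times [0,1]$, pick a continuous function $h \colon M \times [0,1] \to [0,1]$ with $h^{-1}(0) = C$. Define
\[
F(m,t) := H\bigl(\tilde{F}(m,t),\, h(m,t)\bigr).
\]

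The verification then splits into three cases. On $M \times \{0\}$ we have $h = 0$, so $F_0(m) = H(m,0) = m$. On $L \times [0,1]$ we again have $h = 0$, so $F(m,t) = H(G(m,t),0) = G(m,t)$; moreover $G(L \times (0,1]) \subseteq L \setminus A$, and since $L \cap X = A$, this image avoids $X$. Finally, for $(m,t) \in M \times (0,1]$ with $m \notin L$, the point $(m,t)$ lies outside $C$, so $h(m,t) > 0$, and then $F(m,t) \in H(M \times (0,1]) \subseteq M \setminus X$. Thus $F$ has all the required properties.

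The only substantive choices are the AR-extension (straightforward from the hypothesis on $M$) and the separating function $h$ (standard for closed subsets of metrizable spaces); there is no real obstacle. The conceptual point worth emphasizing is just that the homotopy $H$ is used only as a ``corrector'' scaled by $h$, which is arranged to vanish exactly where we have already prescribed the desired values of $F$.
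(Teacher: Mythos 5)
Your proof is correct and takes essentially the same route as the paper: define the partial homotopy on $M\times\{0\}\cup L\times[0,1]$, extend by the AR property, and then post-compose with $H_{h(\cdot,\cdot)}$ where $h$ vanishes exactly on that closed set. The only difference is notational ($F(m,t) := H(\tilde F(m,t),h(m,t))$ versus the paper's $F_t(m) := H_{h(m,t)}\circ F'_t(m)$, which is the same formula).
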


\begin{proof}
First define $F$ on $M \times \{0\} \cup L \times [0,1]$, a 
closed subset of $M \times [0,1]$, by $F|_{M \times \{0\}} = id_{M}$, 
$F|_{L \times [0,1]} = G$. As $M$ is an AR, this extends to some 
$F' \colon M \times [0,1] \to M$. Now to ensure that 
$F(M \times (0,1]) \subseteq M\setminus X$, take a continuous function 
$h \colon M \times [0,1] \to [0,1]$ with 
$h^{-1}(0) = M \times \{0\} \cup L \times [0,1]$, and define 
$F_{t}(m) := H_{h(m,t)} \circ F'_{t}(m)$.
\end{proof}

\begin{corollary}\label{single_H}
Under the conditions of Definition~\ref{define_fdr}, there exists a 
single homotopy $H \colon M \times [0,1] \to M$ with $H_{0} = id_{M}$, 
$H(M \times (0,1]) \subseteq M\setminus X$, and 
$H(L \times [0,1]) \subseteq L$. In other words, $(X,A)$ is homotopy 
negligible in $(M,L)$.
\end{corollary}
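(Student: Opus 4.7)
The plan is to observe that this corollary follows almost immediately from Proposition~\ref{H_extend}, applied to homotopies that the hypotheses of Definition~\ref{define_fdr} already supply. The third bullet of the definition (the approaching strong deformation retraction $\Phi$) is not even needed here; only the two homotopy negligibility conditions from the first two bullets are relevant.

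First, I would extract the two input homotopies. Since $X$ is closed homotopy negligible in $M$ (first bullet of Definition~\ref{define_fdr}), there is a homotopy $H'\colon M \times [0,1] \to M$ with $H'_{0} = id_{M}$ and $H'(M \times (0,1]) \subseteq M \setminus X$. Since $A$ is homotopy negligible in $L$ (second bullet), there is a homotopy $G\colon L \times [0,1] \to L$ with $G_{0} = id_{L}$ and $G(L \times (0,1]) \subseteq L \setminus A$.

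Next, I would feed these into Proposition~\ref{H_extend}, using $H'$ as the ambient homotopy pushing $M$ off $X$ and $G$ as the boundary homotopy on $L$ pushing $L$ off $A$. The proposition's hypotheses are satisfied verbatim ($M$ is an AR, $L$ and $X$ are closed in $M$ with $L \cap X = A$), so it yields a homotopy $F\colon M \times [0,1] \to M$ with $F_{0} = id_{M}$, $F(M \times (0,1]) \subseteq M \setminus X$, and $F|_{L \times [0,1]} = G$. Setting $H := F$ gives exactly what is required: since $G$ takes values in $L$, we automatically get $H(L \times [0,1]) \subseteq L$, and the remaining two conditions are the ones already delivered by Proposition~\ref{H_extend}.

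There is no real obstacle here; the entire content is in recognizing that Proposition~\ref{H_extend} was stated in precisely the form that produces the simultaneous deformation asserted by the pair-version of homotopy negligibility. If anything, the only small point worth remarking on is that the conclusion $H(L \times (0,1]) \subseteq L \setminus A$ (needed to justify calling $(X,A)$ homotopy negligible in $(M,L)$) follows automatically from $H|_{L \times [0,1]} = G$ together with $G(L \times (0,1]) \subseteq L \setminus A$, and so does not require any additional argument.
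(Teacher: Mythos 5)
Your proof is correct and is exactly the argument the paper intends: the corollary is stated without a separate proof precisely because it is an immediate application of Proposition~\ref{H_extend} to the two negligibility homotopies supplied by the first two bullets of Definition~\ref{define_fdr}. Nothing to add.
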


The next two lemmas are needed to extend homotopy negligibility property 
to mapping cylinders. These results are already practically contained 
in~\cite[Proposition~19.8]{Melikhov2022T}; we simply confirm the 
specific formulations we need.

\begin{lemma}\label{negl_mc}
Assume $Y$ is a closed homotopy negligible subset of a space $N$, $X$ 
and $L$ are closed subsets of a space $M$, $A = L \cap X$. Assume 
further that there is a homotopy $H\colon M \times [0,1] \to M$ such 
that $H_{0} = id_{M}$, $H(M \times (0,1]) \subseteq M\setminus X$, and 
$H(L \times [0,1]) \subseteq L$. Then for any map $f\colon L \to N$ 
such that $f^{-1}(Y) = A$, $MC_{X}(f|_{A})$ is homotopy negligible in 
$MC_{M}(f)$. In particular, with $L = M$, $A = X$, and 
$f\colon M \to N$, $MC(f|_{X})$ is homotopy negligible in $MC(f)$.
\end{lemma}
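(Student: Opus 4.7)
The plan is to construct a homotopy $K \colon MC_M(f) \times [0,1] \to MC_M(f)$ with $K_0 = \mathrm{id}$ and $K(MC_M(f) \times (0,1]) \subseteq MC_M(f) \setminus MC_X(f|_A)$. Taking the partial mapping cylinder $MC_X(f|_A)$ to have base $Y$ (rather than all of $N$), its underlying set is $X \cup A \times (0,1] \cup Y$, and the complement in $MC_M(f)$ is $(M \setminus X) \cup (L \setminus A) \times (0,1] \cup (N \setminus Y)$. Fix a homotopy $G \colon N \times [0,1] \to N$ witnessing the homotopy negligibility of $Y$ in $N$.

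I would first set $K_t(m) = H_t(m)$ on the top copy of $M$ (landing in $M \setminus X$, and sending $L$ into $L \setminus A$ for $t > 0$ by combining $H(L \times [0,1]) \subseteq L$ with $L \cap (M \setminus X) = L \setminus A$), and $K_t(n) = G_t(n)$ on the base copy of $N$ (landing in $N \setminus Y$). On the cylinder interior $L \times (0,1]$ the naive choice $K_t(l,s) = (H_{ts}(l), s)$ is the identity at $t = 0$, matches $H_t(l)$ at $s = 1$, and sends $A \times (0,1]$ into $(L \setminus A) \times (0,1]$ for $t > 0$; its defect is continuity at the base, where $(H_{ts}(l), s) \to f(l)$ while $K_t(f(l)) = G_t(f(l))$, and these differ for $t > 0$.

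To repair this I would pass to the ambient join $M \star N$, in which $MC_M(f)$ sits as $M \cup \{(l, 2s-1, f(l))\} \cup N$, and use the join-level homotopy $\tilde K_t(m,s,n) = (H_{t(1+s)/2}(m), s, G_{t(1-s)/2}(n))$: this is continuous on the join, reduces to $H_t$ at the top and $G_t$ at the base, but in general leaves the cylinder subspace, since $G_{\cdot}(f(l))$ need not equal $f(H_{\cdot}(l))$. Correcting this requires picking a Urysohn function $h \colon MC_M(f) \to [0,1]$ with $h^{-1}(0) = MC_X(f|_A)$ and modulating $\tilde K$ by $h$, so that its amplitude tapers wherever the image threatens to leave $MC_M(f)$ while remaining strong on $MC_X(f|_A)$; the compact/full-map version of this construction is~\cite[Proposition~19.8]{Melikhov2022T}, and the ``in particular'' statement is the specialization $L = M$, $A = X$. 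The main obstacle is precisely this base continuity: the rigid cylinder relation $n = f(l)$ prevents a simple product of $H$ and $G$, so the blending must either embed into the join and retract back, or match carefully across $s = 0$ using the join metric --- this is the technical heart.
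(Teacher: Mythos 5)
Your setup is right and you have correctly located the difficulty (continuity across the seam where $L\times(0,1]$ meets $N$), but the proposed repair of that difficulty does not work, and this is exactly the step the lemma lives or dies on. The Urysohn modulation is oriented the wrong way: with $h^{-1}(0)=MC_X(f|_A)$, the modulated homotopy $p\mapsto \tilde K_{t\,h(p)}(p)$ is the identity on $MC_X(f|_A)$ for every $t$, so the image of $MC_X(f|_A)$ at time $t>0$ is $MC_X(f|_A)$ itself --- the opposite of homotopy negligibility. More fundamentally, tapering the time parameter cannot force the image back into the subspace $MC_M(f)\subset M\star N$: for every $t>0$ the point $\tilde K_t(l,2s-1,f(l))$ violates the cylinder relation (third coordinate equal to $f$ of the first) for essentially all $(l,s)$ in the cylinder interior, so the only modulation keeping the image inside $MC_M(f)$ is the one that vanishes identically there, i.e.\ the trivial homotopy on the cylinder. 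The alternative you mention --- extend over the join and retract back --- would require a retraction of a neighbourhood of $MC_M(f)$ in $M\star N$ onto $MC_M(f)$, which you do not construct and which is not available in general ($M$ and $N$ are arbitrary spaces in this lemma, not ARs, so the extend-then-push-off trick of Lemma~\ref{extend_map_to_fSh} is also blocked; and pushing off $MC_X(f|_A)$ by a homotopy of $MC_M(f)$ is circular, since such a homotopy is what we are trying to build).

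The missing idea is a metric reparametrization of $H$. Fixing a bounded metric $d_N$ on $N$, one first replaces $H$ by a ``slowed-down'' homotopy $H'$ that still satisfies the three hypotheses but in addition obeys the uniform estimate $d_N(f\circ H'_t(m),f(m))\le t$ for all $m$ and $t$. One then defines the homotopy so that cylinder points genuinely flow \emph{down} the cylinder and into $N$: for $s\le t/2$ the point $(l,s)$ is sent to $G_{t-2s}\circ f\circ H'_{2s}(l)\in N$, and for $s>t/2$ it stays in the cylinder over $H'_t(l)$ with a strictly decreased $s$-coordinate. Continuity at the base is then an $\varepsilon$-argument from the estimate: if $(l_k,s_k)\to n$ then $f\circ H'_{2s_k}(l_k)\to n$ because $d_N(f\circ H'_{2s_k}(l_k),f(l_k))\le 2s_k\to 0$, so the $N$-branch converges to $G_t(n)$ as required. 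Note that descent into $N$ is unavoidable: any formula that keeps the $s$-coordinate fixed (like your $(H_{ts}(l),s)$) can only converge at the base to $n=G_0(n)$, never to the required boundary value $G_t(n)$, no matter how $H$ is reparametrized.
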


\begin{proof}
In addition to $H$, choose some homotopy 
$G\colon N \times [0,1] \to N$ with $G_{0} = id_{N}$ and 
$G(N \times (0,1]) \subseteq N\setminus Y$. Moreover, let $d_{N}$ be 
some metric on $N$ that provides its topology and is bounded by unity.

First, we will want to ``slow down'' $H$ if necessary to obtain a new 
homotopy $H'\colon M \times [0,1] \to M$ such that the three assumed 
properties of $H$ still apply, but in addition 
$d_{N}(f\circ H_{t}'(m),f(m)) \leq t$ for all $m \in M$ and 
$t \in [0,1]$. To achieve that, in $M \times [0,1] \times [0,1]$ 
consider the closed subset 
$C := \{(m,t,p) \mid d_{N}(f\circ H_{p}(m),f(m)) \geq t\}$ containing 
$M \times \{0\} \times \{0\}$ and define a function 
$\delta(m,t) := \frac{1}{2}d((m,t,0),C)$, taking any metric $d$ giving 
the product topology and bounded by unity. Now take 
$H_{t}'(m) := H_{\delta(m,t)}(m)$. Then $H_{0}' = id_{M}$ (as 
$(m,0,0) \in C$ for all $m \in M$), and for $(x,t) \in X \times (0,1]$, 
$H_{t}'(x) \in M\setminus X$ as $(x,t,0)$ is not in $C$. But also for 
all $(m,t) \in M\ \times [0,1]$, we have $(m,t,\delta(m,t)) \in C$ if 
and only if $\delta(m,t) = 0$ and so $f\circ H_{t}'(m) = f(m)$. Thus 
$d_{N}(f\circ H_{t}'(m),f(m)) \leq t$ for all $m$ and all $t$, as 
needed.

With this we shall define a homotopy 
$F\colon MC_{M}(f) \times [0,1] \to MC_{M}(f)$. First, $F$ restricts to 
$H'$ and $G$ on the standard embeddings of $M$ and $N$ respectively. 
Second, for a point $(l,s) \in L \times (0,1)$, we define 
$$
F_{t}(l,s) := 
\begin{cases}
  G_{t - 2s}\circ f \circ H_{2s}(l), & s \leq \frac{t}{2} \\
  (H_{t}'(m),\frac{2s - t}{2 - t}), & s > \frac{t}{2}
\end{cases}.
$$
Then $F_{0} = id_{MC_{M}(f)}$ and the image of $F_{t}$ does not 
intersect $MC_{X}(f|_{A})$ for $t > 0$. Moreover, $F$ is continuous: in 
the only inobvious case is if a sequence 
$(l_{k},s_{k})$ in $L \times (0,1)$ converges to $n \in N$, we have 
$d_{N}(f(l_{k}),n) \to 0$ and $s_{k} \to 0$. Now assume also 
$t_{k} \to t$. For those $k$ which satifsy $2s_{k} \leq t_{k}$, we have 
$d_{N}(f\circ H_{2s_{k}}'(l_{k}),f(l_{k})) \to 0$ and therefore 
$f\circ H_{2s_{k}}'(l_{k}) \to n$, so in addition to $s_{k} \to 0$ we 
have $G_{t_{k} - 2s_{k}}\circ f\circ H_{2s_{k}}'(l_{k}) \to G_{t}(n)$, 
which means that $F_{t_{k}}(l_{k},s_{k}) \to F_{t}(n)$. And those $k$ 
for which $2s_{k} > t_{k}$ are finite in number unless $t_{k} \to 0$, 
and in the latter case we obtain 
$d_{N}(f\circ H_{t_{k}}'(l_{k}),f(l_{k})) \to 0$, so also 
$f\circ H_{t_{k}}(l_{k}) \to n = G_{t}(n)$, so again 
$F_{t_{k}}(l_{k},s_{k}) \to G_{t}(n)$, as needed.
\end{proof}

\begin{lemma}\label{rel_cyl_h_negl}
Let $f \colon M \to N$ be a map, $X$ and $Y$ closed subsets 
of $M$ and $N$ respectively such that $f^{-1}(Y) = X$, $f|_{X}$ is 
perfect, and $Y$ is homotopy negligible in $N$. Then $Y$ is also 
homotopy negligible in $MC(f|X)$.
\end{lemma}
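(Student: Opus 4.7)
The plan is to construct the required homotopy $F\colon MC(f|X)\times[0,1]\to MC(f|X)$ by combining the given homotopy $G\colon N\times[0,1]\to N$ (witnessing negligibility of $Y$ in $N$) with a sliding of cylinder fibers downward onto the base $N$. The crucial observation is that $Y\subseteq N\subseteq MC(f|X)$, so a cylinder point $(m,s)$ with $s>0$ is already outside $Y$; only points of $N$ itself need to be pushed off $Y$, which $G$ already does. It therefore suffices to extend $G$ continuously across the cylinder without worrying about newly created intersections with $Y$ away from $N$.

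Fix a continuous $h\colon M\to[0,1]$ with $h^{-1}(0)=X$ as in the definition of $MC(f|X)$. I would define
$$
F_t(p):=\begin{cases}
(m,s-t) & \text{if } p=(m,s)\text{ and }s>t,\\
G_{t-s}(f(m)) & \text{if } p=(m,s)\text{ and }0<s\leq t,\\
G_t(n) & \text{if } p=n\in N.
\end{cases}
$$
Then $F_0=id_{MC(f|X)}$, and $F_t$ lands in $MC(f|X)$ because $s-t\leq s\leq h(m)$ keeps the first case inside the allowed region. For $t>0$ the image avoids $Y$: the first case produces a cylinder point of positive height, hence not in $N$ and so not in $Y$; the third case lies in $N\setminus Y$; and in the second case, if $s<t$ then $G_{t-s}(f(m))\in N\setminus Y$, while the boundary $s=t>0$ gives $f(m)$ with $h(m)\geq s>0$, forcing $m\notin X$ and hence $f(m)\notin Y$ by the hypothesis $f^{-1}(Y)=X$.

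The principal obstacle is joint continuity in $(p,t)$, and it concentrates at two natural interfaces. First, inside the cylinder region the two cases must agree at $s=t$: as $s\to t^+$ the first case yields $(m,s-t)\to f(m)$ in the mapping cylinder topology, matching $G_0(f(m))=f(m)$ from the second case. Second, a sequence of cylinder points $(m_k,s_k)$ converges to $n\in N$ exactly when $s_k\to 0$ and $f(m_k)\to n$; for any $t>0$ the second clause then eventually applies and $G_{t-s_k}(f(m_k))\to G_t(n)=F_t(n)$ by continuity of $G$. Routine case analysis over the signs of $s_k-t_k$ and $s-t$ handles joint continuity in all remaining configurations.

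Finally, note that perfectness of $f|_X$ does not enter the construction itself; it is needed upstream only to make the homeomorphism type of $MC(f|X)$ independent of the choice of $h$, so we may fix any admissible $h$ and proceed with the formula above.
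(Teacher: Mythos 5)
Your proof is correct and matches the paper's construction essentially verbatim: the paper also slides cylinder points down by $t$ until they hit the base, then applies the given homotopy on $N$ for the remaining time, with exactly the same case split at $s=t$ and the same observation that $h(m)\geq s>0$ keeps the boundary case off $Y$. The only cosmetic difference is that the paper first builds the homotopy on all of $MC(f)$ and then restricts to $MC(f|X)$, whereas you verify directly that the formula preserves $MC(f|X)$.
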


\begin{proof}
We have a homotopy $H \colon N \times [0,1] \to N$ with 
$H_{0} = id_{N}$ and $H(N \times (0,1]) \subseteq N\setminus Y$. Now 
on the usual cylinder $MC(f)$ define a homotopy 
$G \colon MC(f) \times [0,1] \to MC(f)$ by $G_{t}|_{N}(n) := H_{t}(n)$,

$$
G_{t}|_{M \times (0,1]}(m,s) := 
\begin{cases}
  (m,s-t), & t < s \\
  H_{t-s}\circ f(m), & t \geq s
\end{cases}
$$
 
Then in particular $G(MC(f|X) \times [0,1]) \subseteq MC(f|X)$. 
Moreover, $G(MC(f|X) \times (0,1]) \cap Y = \varnothing$, thus the 
restriction of $G$ gives a homotopy of $MC(f|X)$ proving $Y$ homotopy 
negligible in it.
\end{proof}

\begin{lemma}\label{fdr_AR_change}
Assume the inclusion $A \subseteq X$ is an FDR-embedding, with $M$, 
$L$, and $\Phi$ as in Definition~\ref{define_fdr}. For any other 
AR $L'$ containing $A$ as a closed homotopy negligible subset, we can 
always construct an AR $M'$ 
containing $X$ and $L'$ as closed subsets with $L' \cap X = A$ and $X$ 
homotopy negligible in $M'$, along with an approaching strong 
deformation retraction $\Phi'$ of $M'\setminus X$ onto $L'\setminus A$. 
\end{lemma}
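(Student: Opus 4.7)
My plan is to build $M'$ by gluing $M$ to a relative mapping cylinder that converts $L$ into $L'$, and to propagate $\Phi$ across this gluing. First, by Lemma~\ref{extend_map_to_fSh}(1), I extend $id_{A}\colon A \to A$ to a map $g\colon L \to L'$ with $g^{-1}(A) = A$. Since $g|_{A} = id_{A}$ is perfect, the relative metrizable mapping cylinder $MC(g|A)$ is defined; it contains $L'$ as a closed subset and $L$ as a closed subset via the graph $\{(m, h(m))\}$ of the collar function $h$, with $L \cap L' = A$, and is an AR as a retract of the AR $MC(g)$ (Proposition~\ref{AR_props}(3,5)). Set $M' := M \cup_{L} MC(g|A)$, the pushout along the two closed inclusions of $L$. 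Since $M$, $MC(g|A)$, and $L = M \cap MC(g|A)$ are all ARs with $M$ and $MC(g|A)$ closed in $M'$, Proposition~\ref{AR_props}(4) makes $M'$ an AR; $X$ and $L'$ are closed in $M'$ with $L' \cap X = L' \cap L \cap X = A \cap A = A$.

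For homotopy negligibility of $X$ in $M'$, invoke Corollary~\ref{single_H} to pick a homotopy $H\colon M \times [0,1] \to M$ with $H_{0} = id_{M}$, $H(M \times (0,1]) \subseteq M\setminus X$, and $H(L \times [0,1]) \subseteq L$; automatically $H(L \times (0,1]) \subseteq L \setminus A$. By Lemma~\ref{rel_cyl_h_negl}, $A$ is homotopy negligible in $MC(g|A)$. Then apply Proposition~\ref{H_extend} inside the AR $MC(g|A)$, with its ``$L$'' and ``$X$'' roles played by our $L$ and $A$ (so $L \cap A = A$), using $H|_{L}$ for the $L$-homotopy and the homotopy from Lemma~\ref{rel_cyl_h_negl} for the ambient one. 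This yields $\tilde F\colon MC(g|A) \times [0,1] \to MC(g|A)$ with $\tilde F_{0} = id$, $\tilde F|_{L \times [0,1]} = H|_{L}$, and $\tilde F(MC(g|A) \times (0,1]) \subseteq MC(g|A) \setminus A$. Since $\tilde F$ and $H$ agree on $L = M \cap MC(g|A)$, they glue to a homotopy on $M'$ whose image at positive times lies in $(M\setminus X) \cup (MC(g|A)\setminus A) = M'\setminus X$.

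To build $\Phi'$, I concatenate two stages. Let $\Psi\colon MC(g|A) \times [0,1] \to MC(g|A)$ be the standard linear strong deformation retraction onto $L'$, given by $\Psi_{t}(m, s) := (m, (1-t)s)$ on the cylinder part and the identity on $L'$; it restricts to a strong deformation retraction of $MC(g|A) \setminus A$ onto $L' \setminus A$ fixing $L'$ pointwise. For $t \leq 1/2$ set $\Phi'_{t}$ to act as $\Phi_{2t}$ on $M \setminus X$ and as the identity on $MC(g|A) \setminus L$; continuity along $L \setminus A$ is automatic since $\Phi_{2t}|_{L \setminus A} = id$. For $t \geq 1/2$ set $\Phi'_{t} := \Psi_{2t-1} \circ \Phi'_{1/2}$, which lands in $L' \setminus A$ at $t = 1$ and fixes $L' \setminus A$ throughout. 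The main obstacle will be verifying the approaching property of $\Phi'$: for a sequence $(m_{i}, t_{i})$ in $(M' \setminus X) \times [0,1]$ with $m_{i} \to x \in X$, one splits cases by whether $m_{i}$ eventually lies in $M \setminus X$ or in $MC(g|A) \setminus L$. The subtle case is $m_{i} \in M \setminus X$ with $t_{i} > 1/2$, where approachingness of $\Phi$ forces $\Phi_{1}(m_{i_{k}})$ to accumulate at some $y \in L \cap X = A$, and continuity of $\Psi$ combined with $\Psi$ fixing $A \subseteq L'$ then produces the required accumulation point $y \in A \subseteq X$.
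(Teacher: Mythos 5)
Your proof is correct and follows essentially the same route as the paper's: extend $id_A$ to $g\colon L\to L'$ with $g^{-1}(A)=A$, set $M':=M\cup_L MC(g|A)$, prove it an AR via Proposition~\ref{AR_props}(4), establish homotopy negligibility of $X$ by combining Corollary~\ref{single_H}, Lemma~\ref{rel_cyl_h_negl}, and Proposition~\ref{H_extend}, and obtain $\Phi'$ by running $\Phi$ (rel $MC(g|A)\setminus A$) and then the linear cylinder retraction onto $L'\setminus A$. You spell out the approaching-property verification slightly more explicitly than the paper does, but there is no substantive difference in method.
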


\begin{corollary}\label{fdr_AR_property}
Assume that every space $A$ can be embedded as a closed homotopy 
negligible subset in an AR $L$ such that some property $P$ holds for 
either $L$, or the embedding $A \subseteq L$. Then for every 
FDR-embedding $A \subseteq X$, we can have $M$, $L$, and $\Phi$ as in 
Definition~\ref{define_fdr} such that the property $P$ holds for $L$ or 
$A \subseteq L$ correspondingly.
\end{corollary}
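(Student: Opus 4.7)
The plan is to glue $L'$ onto $M$ along $A$ using the relative metrizable mapping cylinder and then build $\Phi'$ by concatenating the given $\Phi$ with the standard cylinder retraction. Since $A$ is closed in $L$ and closed homotopy negligible in the AR $L'$, Lemma~\ref{extend_map_to_fSh}(1) extends $id_A$ to a map $g\colon L \to L'$ with $g^{-1}(A) = A$. Choose $h\colon L \to [0,1]$ continuous with $h^{-1}(0) = A$ and form $MC(g|A)$, which is an AR by Proposition~\ref{AR_props}(5) (noting $g|_A = id_A$ is perfect). Inside $MC(g|A)$ both $L$ and $L'$ sit as closed subsets, and because $g|_A = id_A$ the copies of $A$ at the top and at the base coincide, so $L \cap L' = A$ there. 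Set $M' := M \cup_L MC(g|A)$; by Proposition~\ref{AR_props}(4) this is an AR, $X \subseteq M$ and $L' \subseteq MC(g|A)$ remain closed in it, and $X \cap L' = X \cap M \cap L' = X \cap L = A$.

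Homotopy negligibility of $X$ in $M'$ requires gluing two homotopies: a homotopy $H$ of $M$ off $X$ (obtained from Corollary~\ref{single_H} so that it also preserves $L$) and a homotopy of $MC(g|A)$ off $A$ (which exists by Lemma~\ref{rel_cyl_h_negl}). To match them on $L \times [0,1]$, first extend $id_{MC(g|A)} \cup H|_{L \times [0,1]}$ from the closed subset $MC(g|A) \times \{0\} \cup L \times [0,1]$ of $MC(g|A) \times [0,1]$ using the AR property of $MC(g|A)$, then push the extension off $A$ via a bump-function composition exactly in the style of Proposition~\ref{H_extend}. The result agrees with $H$ on $L \times [0,1]$ and glues with $H$ to the required homotopy on $M'$.

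To build $\Phi'$, let $\Psi$ be the standard strong deformation retraction of $MC(g|A)$ onto $L'$. Because $g^{-1}(A) = A$, $\Psi$ restricts to an approaching self-deformation of $MC(g|A) \setminus A$ carrying $L \setminus A$ into $L' \setminus A$ at time $1$. Define $\Phi'$ by concatenation: for $t \in [0,1/2]$ use $\Phi_{2t}$ on $M \setminus X$ and the identity on $MC(g|A) \setminus A$; for $t \in [1/2,1]$ use $\Psi_{2t-1} \circ \Phi_1$ on $M \setminus X$ and $\Psi_{2t-1}$ on $MC(g|A) \setminus A$. Continuity follows from the gluing lemma (the two pieces agree on $L \setminus A$ since $\Phi$ fixes it), and the endpoints, the fixing of $L' \setminus A$, and the $(X \times [0,1])-X$-approaching property all drop out by separate verification on each piece, the last using the approaching property of $\Phi$ on the $M$-side and the continuity of $\Psi$ at points of $A$ on the cylinder side.

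The hard part is the homotopy-negligibility step, where two homotopies defined for different reasons must be reconciled on $L \times [0,1]$; the rest of the argument, while cumbersome to write out, is routine bookkeeping with mapping-cylinder coordinates.
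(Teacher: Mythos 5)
Your proposal is correct and follows essentially the same route as the paper: the corollary is an immediate consequence of Lemma~\ref{fdr_AR_change}, whose proof in the paper proceeds exactly as you describe — extend $id_A$ to $L \to L'$ with preimage condition, glue on $MC(\bar{id}_A|A)$ to form $M'$, reconcile the two homotopy-negligibility homotopies via Corollary~\ref{single_H}, Lemma~\ref{rel_cyl_h_negl}, and Proposition~\ref{H_extend}, and concatenate $\Phi$ with the cylinder retraction to obtain $\Phi'$. You have simply inlined the lemma's proof into the corollary rather than citing the lemma; the content is the same.
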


\begin{remark}
Note that the same cannot, in general, be said of $M$ or $X \subseteq M$ 
in this situation.
\end{remark}

\begin{proof}[Proof (of Lemma~\ref{fdr_AR_change})]
The identity map $id_{A}$ extends to some map 
$\bar{id}_{A} \colon L \to L'$ with $\bar{id}_{A}^{-1}(A) = A$. Now 
$MC(\bar{id}_{A}|A)$ is an AR, and so is 
$M' := M \cup_{L} MC(\bar{id}_{A}|A)$. By applying $\Phi$ to 
$M\setminus X$ while keeping every point of 
$MC(\bar{id}_{A}|A)\setminus A$ constant (these agree on 
$L\setminus A$), we obtain an approaching strong deformation retraction 
of $M'\setminus X$ onto $MC(\bar{id}_{A}|A)\setminus A$, which then 
approaching strong deformation retracts onto $L'\setminus A$. 

To prove $X$ homotopy negligible in $M'$, first use 
Corollary~\ref{single_H} to construct a homotopy 
$F \colon M \times [0,1] \to M$ with 
$F_{0} = id_{M}$, $F(M \times (0,1]) \subseteq M\setminus X$, and 
$F(L \times [0,1]) \subseteq L$; further use Lemma~\ref{rel_cyl_h_negl} 
to construct a homotopy 
$G \colon MC(\bar{id}_{A}|A) \times [0,1] \to MC(\bar{id}_{A}|A)$ with 
$G_{0} = id_{MC(\bar{id}_{A}|A)}$ and 
$G(MC(\bar{id}_{A}|A) \times (0,1]) \subseteq MC(\bar{id}_{A}|A)\setminus A$. 
Then by Proposition~\ref{H_extend} there is a homotopy 
$F' \colon MC(\bar{id}_{A}|A) \times [0,1] \to MC(\bar{id}_{A}|A)$ with 
$F'_{0} = id_{MC(\bar{id}_{A}|A)}$, 
$F'(MC(\bar{id}_{A}|A) \times (0,1]) \subseteq MC(\bar{id}_{A}|A)\setminus A$, 
and $F'|_{L \times [0,1]} = F|_{L \times [0,1]}$. Now combine $F$ and 
$F'$ into a single homotopy $F'' \colon M' \times [0,1] \to M'$.
\end{proof}

\begin{proposition}\label{fdr_closed}
Every identity map is an FDR-embedding. If $i\colon X \to Y$ and 
$j\colon Y \to Z$ are FDR-embeddings, so is $ji$.
\end{proposition}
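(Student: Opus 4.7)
The plan is to handle the two claims separately. For the identity case, my approach is to embed $X$ as a closed homotopy negligible subset of some AR $M$ (possible by Proposition~\ref{AR_props}(7)) and simply take $L = M$ with the constant homotopy $\Phi_t = id_{M \setminus X}$; a trivial check shows all conditions of Definition~\ref{define_fdr} hold with $A = X$.

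For the composition of FDR-embeddings $i\colon X \hookrightarrow Y$ and $j\colon Y \hookrightarrow Z$, the strategy is to align the ambient ARs so that a single concatenated homotopy can serve as the approaching strong deformation retraction. Let $(M_1, L_1, \Phi)$ be FDR data for $i$ (so $M_1$ is the AR containing $Y$ h.n., and $L_1$ is the AR with $L_1 \cap Y = X$) and $(M_2, L_2, \Psi)$ be FDR data for $j$. The first key step is to apply Lemma~\ref{fdr_AR_change} to $j$ with the ``new $L'$'' chosen to be $M_1$ itself, which is legitimate precisely because $M_1$ is an AR containing $Y$ as a closed homotopy negligible subset. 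This produces an AR $M_2'$ containing both $Z$ and $M_1$ as closed subsets with $M_1 \cap Z = Y$ and $Z$ homotopy negligible in $M_2'$, together with an approaching SDR $\Psi'\colon (M_2'\setminus Z)\times[0,1] \to M_2'\setminus Z$ onto $M_1 \setminus Y$. I then set $M' := M_2'$ and $L' := L_1$; since $L_1$ is closed in $M_1$ and $M_1$ is closed in $M_2'$, $L_1$ is closed in $M_2'$, and $L_1 \cap Z = (L_1 \cap M_1) \cap Z = L_1 \cap Y = X$. The approaching strong deformation retraction $\Theta$ is built by concatenation: run $\Psi'$ on $[0,\tfrac{1}{2}]$ (rescaled) to land everything inside $M_1 \setminus Y$, then run $\Phi$ on $[\tfrac{1}{2},1]$ (rescaled) starting from $\Psi'_1$, landing in $L_1 \setminus X$. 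Continuity at $t = \tfrac{1}{2}$ is immediate since $\Phi_0 = id$, and $\Theta_t$ fixes $L_1 \setminus X$ at all times because $L_1 \setminus X \subseteq M_1 \setminus Y$ is fixed by $\Psi'$ throughout (as an SDR target) and by $\Phi$ throughout (by hypothesis).

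The main obstacle is verifying that $\Theta$ is actually $(Z\times[0,1])$--$Z$-approaching — the concatenation must respect the approaching property along sequences that accumulate at $Z$. For a sequence $(m_k, t_k) \to (z, t)$ with $z \in Z$, I pass to a subsequence on which all $t_k$ lie on one side of $\tfrac{1}{2}$. The case $t_k \leq \tfrac{1}{2}$ follows directly from $\Psi'$ being approaching. The delicate case is $t_k \geq \tfrac{1}{2}$: here $\Psi'_1(m_k) \in M_1$, and because $\Psi'$ is approaching, a subsequence of $\Psi'_1(m_k)$ converges to a point of $Z$; but since $M_1$ is closed in $M_2'$ with $M_1 \cap Z = Y$, that limit must lie in $Y$. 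Applying the approaching property of $\Phi$ to the pair $(\Psi'_1(m_k), 2t_k - 1) \in (M_1 \setminus Y)\times[0,1]$ with accumulation point in $Y\times[0,1]$ then yields a subsequence of $\Phi_{2t_k-1}(\Psi'_1(m_k))$ converging to a point of $Y \subseteq Z$. This use of $M_1$ being closed inside the amalgamated ambient $M_2'$ — guaranteed by Lemma~\ref{fdr_AR_change} — is exactly what makes the alignment argument go through, and is why the preliminary lemma is indispensable here.
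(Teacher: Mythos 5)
Your proposal is correct and follows essentially the same route as the paper: the identity case is handled by the trivial constant homotopy, and for composition the paper also applies Lemma~\ref{fdr_AR_change} to re-ambient the data for $j$ so that the big AR for $j$ contains the big AR for $i$ as a closed subset with intersection $Y$ against $Z$, and then concatenates the two approaching strong deformation retractions. Your write-up is a bit more explicit than the paper's (the paper abbreviates the concatenation as ``$\Phi\circ\Psi''$'' and leaves the approaching-property check tacit), and your observation that the closedness of $M_1$ in $M_2'$ is what forces the intermediate accumulation point into $Y$ correctly identifies the crux of that check.
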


\begin{proof}
Identity part is clear.

For composition, first $ji$ is a closed embedding. Now assume $i$ being 
an FDR-embedding is proven (as per Definition~\ref{define_fdr}) with 
ARs $N \supset Y$, $M \supset X$, and an approaching strong deformation 
retraction $\Phi\colon (N\setminus Y)\times [0,1] \to N\setminus Y$, and 
$j$ with ARs $L' \supset Z$, $N' \supset Y$, and 
$\Psi'\colon (L'\setminus Z)\times [0,1] \to L'\setminus Z$; all 
inclusions are those of closed homotopy negligible subsets. By the 
previous lemma, construct an AR $L''$ containing $N$ and $Z$ as closed 
subsets with $N \cap Z = Y$, $Z$ a Z-set in $L''$, and $L''\setminus Z$ 
approaching strong deformation retracting onto $N\setminus Y$ by some 
$\Psi''$. Then the composition $\Phi \circ \Psi''$ is well-defined (as 
the image of $\Psi''_{1}$ is $N\setminus Y$), and is an approaching 
strong deformation retraction of $L''\setminus Z$ onto $M\setminus X$.
\end{proof}

\begin{lemma}
Let $A \subseteq X$ be a closed embedding with $A$ nonempty. Then there 
exist an AR $M$ containing $X$ as a closed subset, a closed subset 
$L \subseteq M$ such that $L \cap X = A$ and $L$ is an AR (in fact, $L$ 
can be chosen to be a strong deformation retract of $M$), and a homotopy 
$H\colon M \times [0,1] \to M$ such that $H_{0} = id_{M}$, 
$H(M \times (0,1]) \subseteq M\setminus X$, and 
$H(L \times [0,1]) \subseteq L$.
\end{lemma}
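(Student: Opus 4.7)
The plan is to construct $M$ as $P\times[0,1]$, where $P$ is an AR thickening of $X$, and to take $L$ to be the epigraph of a function vanishing exactly on $A$. This sidesteps the natural but unworkable attempts: $L=A\times[0,1]$ fails because $A$ need not be an AR, and more elaborate mapping-cylinder gluings along $A$ hit the same wall via Proposition~\ref{AR_props}(4), which demands the intersection be an AR.

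Concretely, first invoke Proposition~\ref{AR_props}(7) to embed $X$ as a closed subset of an AR $P$, and set $M:=P\times[0,1]$, which is an AR by Proposition~\ref{AR_props}(1)(2); identify $X$ with the closed subspace $X\times\{0\}\subset M$. Since $A$ is closed in $X$ and hence in $P$, metrizability gives a continuous $f\colon P\to[0,1]$ with $f^{-1}(0)=A$ (for example, $f(p):=\min\{d_P(p,A),1\}$). Define
$$
L:=\{(p,s)\in M\mid s\geq f(p)\},
$$
which is closed in $M$. On $X\times\{0\}$, $(x,0)\in L$ iff $f(x)\leq 0$ iff $x\in A$, so $L\cap X=A$ after identification.

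To see $L$ is an AR, exhibit it as a strong deformation retract of $M$ via
$$
\rho_t(p,s):=\bigl(p,\max\{s,tf(p)\}\bigr).
$$
One checks $\rho_0=id_M$, $\rho_1(M)\subseteq L$, and $\rho_t|_L=id_L$ (since on $L$ we have $s\geq f(p)\geq tf(p)$). As $M$ is an AR and $L$ is a retract of $M$, Proposition~\ref{AR_props}(5) gives that $L$ is an AR.

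Finally, define $H\colon M\times[0,1]\to M$ by $H_t(p,s):=(p,\min\{s+t,1\})$. Then $H_0=id_M$; for $t>0$ the second coordinate of $H_t(p,s)$ is at least $t>0$, so $H_t(p,s)\notin X\times\{0\}=X$, giving $H(M\times(0,1])\subseteq M\setminus X$; and if $(p,s)\in L$, then $\min\{s+t,1\}\geq\min\{f(p),1\}=f(p)$, so $H(L\times[0,1])\subseteq L$. The main conceptual step is recognizing the epigraph construction; after that, each verification is routine.
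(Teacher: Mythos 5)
Your proposal is correct and follows essentially the same route as the paper: embed $X$ in a product AR $M'\times[0,1]$ with $X\cong X\times\{0\}$, take $L$ to be the epigraph $\{(m,s)\mid s\geq h(m)\}$ of a function $h$ vanishing exactly on $A$, and use the additive homotopy $H_t(m,s)=(m,\min\{s+t,1\})$. The only difference is that you explicitly verify $L$ is an AR by exhibiting the strong deformation retraction $\rho_t(p,s)=(p,\max\{s,tf(p)\})$, a detail the paper merely asserts parenthetically.
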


\begin{proof}
Take $M'$ to be any AR containing $X$ as a closed subset, and take any 
continuous function $h\colon M' \to [0,1]$ such that $h^{-1}(0) = A$.
Now let $M := M' \times [0,1]$ with $X = X \times \{0\} \subset M$, 
$L := \{(m,s) \in M \mid s \geq h(m)\}$, $d_{M'}$, and 
$H_{t}(m,s) := (m,\min\{s+t,1\})$
\end{proof}

\begin{lemma}\label{T_fdr}
Let $A$ be a closed subset of a space $X$. Then the inclusion 
of $A \times [0,1] \cup X \times \{0\}$ in $X \times [0,1]$ is an 
FDR-embedding.~(cf.~\cite[Corollary~1.6(b)]{Melikhov2022F},~\cite[Corollary~(1.6)]{Cathey1981})
\end{lemma}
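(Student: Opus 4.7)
The plan is to realize the data of Definition~\ref{define_fdr} by explicit formulas. Let $P$ be an AR containing $X$ as a closed subset (Proposition~\ref{AR_props}(7)), and pick a continuous $\alpha\colon P \to [0,1]$ with $\alpha^{-1}(0) = A$. Set $M := P \times [0,1] \times [0,1]$, an AR by Proposition~\ref{AR_props}(2), and embed $X \times [0,1]$ as $X \times [0,1] \times \{0\} \subset M$. Then $X \times [0,1]$ is closed homotopy negligible in $M$, being a closed subset of $P \times [0,1] \times \{0\}$, which is closed homotopy negligible by Proposition~\ref{AR_props}(7). Writing $Y := A \times [0,1] \cup X \times \{0\}$, I take $L := \{(p, s, r) \in M \mid r \geq s\alpha(p)\}$: this is closed in $M$ and is a retract of $M$ via $(p, s, r) \mapsto (p, s, \max\{r, s\alpha(p)\})$, hence is an AR by Proposition~\ref{AR_props}(5); the intersection $L \cap (X \times [0,1] \times \{0\}) = Y \times \{0\}$ because at $r = 0$ the defining inequality reduces to $s = 0$ or $p \in A$; and the homotopy $H_u(p, s, r) := (p, s, \min\{r + u, 1\})$ restricts to $L$ and witnesses $Y \times \{0\}$ as closed homotopy negligible in $L$.

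The main obstacle is the approaching strong deformation retraction. The naive retraction $(p, s, r) \mapsto (p, s, \max\{r, s\alpha(p)\})$ (pushing $r$ up to $s\alpha(p)$) is \emph{not} approaching: for $p \in X \setminus A$ and $s > 0$, it would send a sequence approaching $(p, s, 0)$ to a limit with positive $r$-coordinate, escaping the forbidden strip $X \times [0,1] \times \{0\}$. The fix is to push $s$ down instead, keeping $r$ fixed. Define $\phi\colon M \to [0,1]$ by $\phi(p, s, r) := \max\{0, 1 - r/(s\alpha(p))\}$ when $s\alpha(p) > 0$ and $\phi := 0$ otherwise, and set $\Phi_u(p, s, r) := (p, s(1 - u\phi(p, s, r)), r)$.

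Verification then splits cleanly. Since $\phi$ vanishes on $L$, one has $\Phi_u|_L = \mathrm{id}_L$; for $(p, s, r) \notin L$ one has $s\alpha(p) > r \geq 0$, so the formula computes to $\Phi_1(p, s, r) = (p, r/\alpha(p), r)$, lying on the boundary of $L$; and because the $r$-coordinate is unchanged, $\Phi_u$ preserves the domain $M \setminus (X \times [0,1] \times \{0\})$ and is approaching (a sequence in the domain approaching the forbidden strip has image with $r$-coordinate still tending to $0$ and $p$-coordinate in $X$, so every accumulation lies in $X \times [0,1] \times \{0\}$). The one delicate point is continuity: $\phi$ can oscillate where $s\alpha(p) = 0$, but this occurs either with $s_i \to 0$ (where the outer factor $s_i$ absorbs the oscillation in the second coordinate of $\Phi$) or with $p_i \to p \in A$ and $s > 0$ (where being in the domain forces $r > 0$ in the limit, making $\phi_i \to 0$ anyway).
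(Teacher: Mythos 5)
Your proof is correct, and it takes a genuinely different and more elementary route than the paper's. The paper first builds an AR pair $(M,L)$ for the plain inclusion $A\subseteq X$ via a preceding setup lemma, then passes to the relative metrizable mapping cylinder $N := MC(L\subseteq M\mid A)$, invoking Lemma~\ref{rel_cyl_h_negl}, Proposition~\ref{H_extend}, and Remark~\ref{MC_remarks}(3) to carry homotopy negligibility through the cylinder and extract the approaching SDR; it also must treat $A=\varnothing$ as a separate case. You instead work directly inside the product $P\times[0,1]^2$, realize the required $L$ as the closed ``cone'' $\{r\ge s\alpha(p)\}$ (a retract, hence an AR), and write down the approaching SDR by an explicit formula, handling $A=\varnothing$ uniformly. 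Your key observation --- that the naive push-up retraction $r\mapsto\max\{r,s\alpha(p)\}$ is not approaching, and that pushing $s$ down instead preserves the $r$-coordinate and hence the approaching property automatically --- is exactly the point, and it is verified correctly. The one place your write-up is informal is the continuity of $\Phi$; this can be made transparent by noting that the only quantity appearing in $\Phi_u$ is $s\cdot\phi(p,s,r)$, and that on the open set $\{\alpha(p)>0\}$ one has $s\phi=\max\{0,\,s-r/\alpha(p)\}$, which is manifestly continuous there, while at points with $p\in A$ (necessarily $r>0$ in the domain) any nearby value of $s'-r'/\alpha(p')$ tends to $-\infty$, so $s\phi$ is eventually zero. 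This bypasses the oscillation discussion entirely. Both approaches buy the same lemma; yours trades the paper's relative-mapping-cylinder machinery for a short formula, at the cost of a continuity check that must be done by hand rather than inherited from the cylinder's deformation retraction.
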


\begin{remark}\label{fdr_nontrivial}
This provides nontrivial examples of FDR-embeddings. After all, a closed 
embedding $A \subseteq X$ is a cofibration if and only if 
$A \times [0,1] \cup X \times \{0\}$ is a retract of $X \times [0,1]$, 
and there exist closed embeddings of metrizable spaces that are not 
cofibrations. Note also that the fine shape classes of the approaching 
retractions are therefore not induced, in general, by any ordinary maps.
\end{remark}

\begin{proof}[Proof (of Lemma~\ref{T_fdr})]
If $A$ is empty, then $X \times [0,1]$ has a strong 
deformation retraction onto $X \times \{0\}$, which readily extends to 
a strong deformation retraction of $M \times [0,1]$ onto 
$M \times \{0\}$ for any AR $M$ containing $X$ as a closed homotopy 
negligible subset. 

Now assume $A$ nonempty. Take $M$, $L$, and $H$ given by the previous 
lemma. Let $N$ to be the relative metrizable mapping cylinder of the 
embedding $L \subseteq M$ by $A$. Then $N$ is an AR, and there is a 
homotopy $G \colon N \times [0,1] \to N$ with $G_{0} = id_{N}$, 
$G(N \times (0,1]) \subseteq N\setminus X$, and 
$G(L \times [0,1]) \subseteq L$: first apply Lemma~\ref{rel_cyl_h_negl} 
using $H$ to prove that $X$ is closed and homotopy negligible in $N$, 
then apply Proposition~\ref{H_extend} with $H|_{L \times [0,1]}$ as the 
second homotopy to obtain $G$ with 
$G|_{L \times [0,1]} = H|_{L \times [0,1]}$. Now $N \times [0,1]$ is an 
AR containing $X \times [0,1]$ as a closed homotopy negligible subset, 
and $L \times [0,1] \cup N \times \{0\}$ an AR in it intersecting 
$X \times [0,1]$ precisely by $A \times [0,1] \cup X \times \{0\}$, 
which is closed and homotopy negligible in it too (this uses the fact 
that $G(L \times [0,1]) \subseteq L$). 

For the approaching strong deformation retraction, note that 
$N\setminus X$ is homeomorphic to the (nonrelative) metrizable mapping 
cylinder of the embedding of $L\setminus A$ into $M\setminus X$. Then, 
as per Remark~\ref{MC_remarks}(3), we have a strong deformation 
retraction of $(N\setminus X) \times [0,1]$ onto 
$(L\setminus A) \times [0,1] \cup (N\setminus X) \times \{0\}$; this 
retraction is correctly approaching, thus proving the lemma.
\end{proof}

\begin{corollary}\label{II_fdr}
If $A \subseteq X$ is an FDR-embedding, then so is 
$A \times [0,1] \cup X \times \{0,1\} \subseteq X \times [0,1]$~(cf.~\cite[Corollary~(1.6)]{Cathey1981}).
\end{corollary}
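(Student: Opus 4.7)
The plan is to construct explicit FDR data for the inclusion $A \times [0,1] \cup X \times \{0,1\} \subseteq X \times [0,1]$ by doubling the data witnessing $A \subseteq X$. By Definition~\ref{define_fdr}, pick an AR $M' \supseteq X$ with $X$ closed homotopy negligible, a closed AR $L' \subseteq M'$ with $L' \cap X = A$ and $A$ homotopy negligible in $L'$, and an approaching strong deformation retraction $\Phi'\colon (M' \setminus X) \times [0,1] \to M' \setminus X$ onto $L' \setminus A$. I take as the ambient AR $M := M' \times [0,1]$ (containing $X \times [0,1]$ as a closed homotopy negligible subset via the product homotopy) and as the target subset $L := L' \times [0,1] \cup M' \times \{0,1\}$, for which $L \cap (X \times [0,1]) = A \times [0,1] \cup X \times \{0,1\}$.

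To confirm $L$ is an AR, I would apply Proposition~\ref{AR_props}(4) twice. First, each $L_j := L' \times [0,1] \cup M' \times \{j\}$ for $j = 0, 1$ decomposes as $(L' \times [0,1]) \cup (M' \times \{j\})$ with intersection $L' \times \{j\} \cong L'$; all three are ARs, so each $L_j$ is. Then $L = L_0 \cup L_1$ with $L_0 \cap L_1 = L' \times [0,1]$, again all ARs. To show $A \times [0,1] \cup X \times \{0,1\}$ is homotopy negligible in $L$, I would invoke Proposition~\ref{H_extend} with the negligibility homotopies $H'$ of $X$ in $M'$ and $G$ of $A$ in $L'$ to obtain $F\colon M' \times [0,1] \to M'$ satisfying $F_0 = id_{M'}$, $F(M' \times (0,1]) \subseteq M' \setminus X$, and $F|_{L' \times [0,1]} = G$. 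Defining $K_t(l, s) := (G_t(l), s)$ on $L' \times [0,1]$ and $K_t(m, j) := (F_t(m), j)$ on $M' \times \{j\}$ then gives a consistent homotopy of $L$ pushing off $A \times [0,1] \cup X \times \{0,1\}$ (agreement on the overlap $L' \times \{0,1\}$ follows from $F|_{L'} = G$).

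The main technical step is then to produce an approaching strong deformation retraction $\Psi\colon (M' \setminus X) \times [0,1] \times [0,1] \to (M' \setminus X) \times [0,1]$ onto $(L' \setminus A) \times [0,1] \cup (M' \setminus X) \times \{0, 1\}$, fixing this target. The strategy is to combine $\Phi'$ with the classical strong deformation retraction of $[0,1]^2$ onto its U-shape $\{0,1\} \times [0,1] \cup [0,1] \times \{0\}$, paralleling how Lemma~\ref{T_fdr} combines an approaching SDR with the L-shape retraction from Remark~\ref{MC_remarks}(3). For $(m, s)$ with $s$ bounded away from $\{0, 1\}$, $\Psi_1$ should send $(m, s)$ to $(\Phi'_1(m), s) \in (L' \setminus A) \times [0,1]$; for $s$ near an endpoint, the retraction simultaneously slackens the $\Phi'$-flow and pulls $s$ toward that endpoint. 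The approaching property of $\Psi$ will follow from that of $\Phi'$ together with the fact that the $[0,1]$-factor contributes no negligible part.

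I expect the main obstacle to be writing down an explicit formula for $\Psi$ that correctly interpolates the $[0,1]^2$ U-shape retraction with $\Phi'$, is continuous at the corners $s \in \{0, 1\}$ where both retractions degenerate to the identity, and whose approaching condition can be verified for arbitrary sequences in $(M' \setminus X) \times [0,1]$ converging to limit points in $X \times [0, 1]$.
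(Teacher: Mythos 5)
Your setup is correct and matches the paper's: the ambient AR $M'\times[0,1]$, the target AR $L := L'\times[0,1]\cup M'\times\{0,1\}$, the two applications of Proposition~\ref{AR_props}(4), and the use of Proposition~\ref{H_extend} to get a compatible negligibility homotopy $K$ are all exactly right and dispose of the first two clauses of Definition~\ref{define_fdr}. But the third clause --- producing the approaching strong deformation retraction $\Psi$ onto $(L'\setminus A)\times[0,1]\cup(M'\setminus X)\times\{0,1\}$ --- is precisely where your proposal stops: you describe the desired qualitative behavior and then explicitly flag the formula as the ``main obstacle.'' That is the entire technical content of the corollary, so as written the proof is incomplete.

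The paper closes this gap not by a single interpolated formula but by a two-stage composition $\tilde\Phi = \tilde\Phi^{(2)}\circ\tilde\Phi^{(1)}$. The first stage slows the $\Phi$-flow near the two endpoints via a bump function in $s$: $\tilde\Phi^{(1)}_t(m,s) := (\Phi_{(1-|2s-1|)t}(m),\,s)$. This leaves $(M'\setminus X)\times\{0,1\}$ and $(L'\setminus A)\times[0,1]$ pointwise fixed, and at the end of stage one it pushes the entire middle slice $(M'\setminus X)\times\{\tfrac12\}$ into $(L'\setminus A)\times\{\tfrac12\}$. Because of that last fact, the second stage can apply the already-proved Lemma~\ref{T_fdr} (via Remark~\ref{MC_remarks}(3)) to the two halves $(M'\setminus X)\times[0,\tfrac12]$ and $(M'\setminus X)\times[\tfrac12,1]$ \emph{independently}: on the image of $\tilde\Phi^{(1)}_1$ the two L-shape retractions agree along $s=\tfrac12$, since the only points remaining there already lie in the subset that both retractions fix. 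This factorization is what lets you reuse the approaching property established in Lemma~\ref{T_fdr} instead of re-verifying it for a new hand-written interpolation --- which is exactly where your proposed single-formula approach would get stuck. If you want to finish your version, you should either adopt this composition or else produce the interpolating map explicitly and then carry out the sequence-convergence verification of the approaching condition from scratch, checking in particular the behavior at the corners $s\in\{0,1\}$ and along $(L'\setminus A)\times[0,1]$.

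Two minor points. First, Lemma~\ref{T_fdr} does not itself ``combine an approaching SDR with the L-shape retraction''; it constructs an approaching SDR from Remark~\ref{MC_remarks}(3) for an arbitrary closed pair $A\subseteq X$, with no FDR hypothesis involved, so your stated analogy is slightly off. Second, $\Psi_1(m,s) = (\Phi'_1(m),s)$ for interior $s$ is one valid choice of endpoint map, but a strong deformation retraction must also fix $(M'\setminus X)\times\{0,1\}$ throughout; the bump function $(1-|2s-1|)$ in the paper's $\tilde\Phi^{(1)}$ is exactly what enforces this, and any direct formula you write would need a similar device.
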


\begin{proof}
Take spaces $M \supset X$ and $L \supset A$ and map $\Phi$ 
as in the definition of an FDR-embedding, and take 
$L \times [0,1] \cup M \times \{0,1\} \subseteq M \times [0,1]$ to be 
the ARs, clearly containing $A \times [0,1] \cup X \times \{0,1\}$ and 
$X \times [0,1]$ respectively as closed homotopy negligible subsets. Now 
define 
$\tilde{\Phi} \colon (M \times [0,1] \setminus X \times [0,1]) \times [0,1] \to M \times [0,1] \setminus X \times [0,1]$ 
as a composition of $\tilde{\Phi}^{(1)}$ followed by 
$\tilde{\Phi}^{(2)}$. Here, 
$\tilde{\Phi}^{(1)}_{t}(m,s) := (\Phi_{(1 - |2s - 1|)t}(m),s)$, 
and then $\tilde{\Phi}^{(2)}$ applies the previous lemma to 
$(M \setminus X) \times [0,\frac{1}{2}]$ and 
$(M \setminus X) \times [\frac{1}{2},1]$ separately. The two maps for 
halves can be glued together into a single $\tilde{\Phi}^{(2)}$ because 
the image of $\tilde{\Phi}^{(1)}_{1}$ does not contain any points of the 
intersection $(M \setminus X) \times \{\frac{1}{2}\}$ that do not belong 
to $(L \setminus A) \times \{\frac{1}{2}\}$, and those stay unchanged 
under both maps, which therefore coincide on the intersection. Then 
$\tilde{\Phi}$ is an approaching strong deformation retraction, as 
needed.
\end{proof}

The final part of this section is not used further in the present work, 
yet it pertains directly to the search for the model structure, as 
explained in the introduction:

\begin{theorem}\label{WC_is_fdr}
Let $i\colon A \to X$ be a closed embedding. Then $i$ is an 
FDR-embedding if and only if $i$ is a fine shape equivalence.
\end{theorem}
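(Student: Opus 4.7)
The forward direction is immediate from the definition: given FDR data $M \supseteq X$, $L \supseteq A$, and an approaching strong deformation retraction $\Phi$ as in Definition~\ref{define_fdr}, I would observe that $\Phi_{1}$, viewed as an approaching map $M\setminus X \to L\setminus A$, represents the fine shape inverse of $[i]_{fSh}$. Composed with the inclusion $\iota\colon L\setminus A \hookrightarrow M\setminus X$ (which represents $[i]_{fSh}$), it becomes $\Phi_{1}$ regarded as a self-map of $M\setminus X$, approaching homotopic to $id_{M\setminus X}$ via $\Phi$ itself; the other composition is literally $id_{L\setminus A}$, since $\Phi_{t}|_{L\setminus A} = id$ for every $t$.

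For the reverse direction, the plan is to reconstruct the data of Definition~\ref{define_fdr} from the hypothesis that $[i]_{fSh}$ is an isomorphism. First I would build the ambient ARs: choose an AR $L$ containing $A$ as a closed homotopy negligible subset, some AR $M_{0}$ containing $X$ as such, and produce an AR $M$ containing both $X$ and $L$ as closed subsets with $L \cap X = A$ and $X$ still closed homotopy negligible in $M$ via a mapping cylinder construction analogous to the proof of Lemma~\ref{fdr_AR_change} (extend $A \hookrightarrow L$ to $\hat{\jmath}\colon M_{0} \to L$ with $\hat{\jmath}^{-1}(A) = A$ by Lemma~\ref{extend_map_to_fSh}, then take the relative metrizable mapping cylinder $MC(\hat{\jmath}|_{A})$). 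Next, using the lemma preceding Corollary~\ref{fSh_composable}, I would represent $[i]_{fSh}^{-1}$ by an approaching map $\rho\colon M\setminus X \to L\setminus A$ for these specific ARs; the inverse relation in $\mathrm{fSh(M)}$ then supplies two approaching homotopies, $\Phi^{(1)}\colon id_{M\setminus X} \simeq_{\mathrm{appr}} \iota\rho$ and $\Psi\colon \rho\iota \simeq_{\mathrm{appr}} id_{L\setminus A}$.

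The hardest step will be upgrading $\Phi^{(1)}$ to an approaching strong deformation retraction $\Phi$ with $\Phi_{t}|_{L\setminus A} = id$ for every $t$. The restriction $\Phi^{(1)}|_{L\setminus A}$, concatenated with $\iota\Psi$, produces an approaching self-loop of $id_{L\setminus A}$; the plan is to null-homotope this loop on $L\setminus A$ and extend the nullifying homotopy to all of $M\setminus X$, thereby modifying $\Phi^{(1)}$ so that its restriction to $L\setminus A$ becomes the constant identity while $\Phi^{(1)}_{1}$ still maps into $L\setminus A$. The main obstacle is this extension step: in ordinary homotopy theory one would invoke HEP for a closed cofibration, but a closed embedding of metrizable spaces need not be a cofibration (cf.~Remark~\ref{fdr_nontrivial}). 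The workaround is to perform the construction at the level of the ambient ARs $M$ and $L$, combining Proposition~\ref{H_extend} with the approaching-homotopy extension in Lemma~\ref{extend_map_to_fSh}(2) to glue and extend homotopies while preserving the $(X\times[0,1])$-$X$-approaching condition; the AR property of the ambient spaces then substitutes for HEP.
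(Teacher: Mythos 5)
The forward direction is fine and agrees with the paper. Your setup for the reverse direction (ambient ARs via a relative mapping cylinder, represent $[i]_{fSh}^{-1}$ by an approaching map $\rho$, obtain homotopies $\Phi^{(1)}$ and $\Psi$) also matches the paper's opening moves. But the decisive step in your outline is flawed.

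You write that $\Phi^{(1)}|_{L\setminus A}$ concatenated with $\iota\Psi$ gives ``an approaching self-loop of $id_{L\setminus A}$''; that is not what it is. $\Phi^{(1)}|_{L\setminus A}$ runs from the inclusion $\iota\colon L\setminus A \hookrightarrow M\setminus X$ to $\iota\rho\iota$, and $\iota\Psi$ runs it back to $\iota$, so the concatenation is a loop of $\iota$ in the space of maps $L\setminus A \to M\setminus X$, not of $id_{L\setminus A}$. More seriously, there is no reason whatsoever that this loop should be null-homotopic; nothing in the hypotheses forces it, and the standard proof of the topological prototype (cofibration $+$ homotopy equivalence $\Rightarrow$ strong deformation retract, cf.\ \cite{Strom1968} and \cite[Lemma~A.2]{Melikhov2022T}) does not go through null-homotoping any loop, precisely because that would fail. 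Your claim that the AR property of $M$, $L$ ``substitutes for HEP'' is also too coarse: extending a self-homotopy of $\iota$ over $M\setminus X$ via Proposition~\ref{H_extend} or Lemma~\ref{extend_map_to_fSh}(2) would only give you a homotopy of $id_{M\setminus X}$ to some other map, with no control on where that map sends $M\setminus X$, so it does not yield the required retraction.

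The correct route, which is what the paper does, is a two-stage exploitation of an approaching homotopy extension property. One first arranges matters (via the relative cylinder $MC(\bar{\imath}|A)$ with $M$ as the base, not $L$ --- note the direction is opposite to yours) so that $M\setminus X$ is the nonrelative metrizable mapping cylinder of $L\setminus A \hookrightarrow (\text{old }M\setminus X)$; then Remark~\ref{MC_remarks}(3) and Lemma~\ref{T_fdr} give an approaching strong deformation retraction of $(M\setminus X)\times[0,1]$ onto $(L\setminus A)\times[0,1]\cup(M\setminus X)\times\{0\}$. This is the approaching-HEP tool. With it, one defines $\omega$ on $(L\setminus A)\times[0,1]\cup(M\setminus X)\times\{0\}$, taking the value $\Phi^{(1)}_1=\bar{\imath}\phi$ (regarded as landing in $L\setminus A$) on the $\{0\}$-slice and the reversal $\Psi_{1-t}$ on the cylinder over $L\setminus A$; these agree on the overlap precisely because $\bar{\imath}$ is a closed embedding. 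Extending $\omega$ over $(M\setminus X)\times[0,1]$ produces an $\Omega$ whose time-$1$ slice is a genuine approaching retraction onto $L\setminus A$. One then concatenates $\Phi^{(1)}$ with $\Omega$ to get a deformation retraction (not yet strong), and finally applies the approaching SDR from Lemma~\ref{T_fdr} once more to render it strong, i.e., stationary on $L\setminus A$. The null-homotopy idea has to be replaced by this corner-extension and re-composition argument.
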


\begin{corollary}\label{WC_is_fdr_cor}
(1)A closed embedding homotopic to an FDR-embedding is itself an 
FDR-embedding;

(2)Let $i\colon X \to Y$ and $j\colon Y \to Z$ be closed embeddings. If 
any two of $i$, $j$, and $ji$ are FDR-embeddings, then so is the 
third;

(3)Retracts of FDR-embeddings are FDR-embeddings.
\end{corollary}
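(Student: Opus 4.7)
The plan is to reduce each of the three items to the corresponding closure property of fine shape equivalences and then invoke Theorem~\ref{WC_is_fdr} (identifying FDR-embeddings with the closed embeddings that are fine shape equivalences) in both directions. The essential inputs beyond Theorem~\ref{WC_is_fdr} are Proposition~\ref{extend_homotopy_to_fSh} (homotopy-invariance of the fine shape class) and Proposition~\ref{fSh_eq_props} (2-out-of-3 and retract-closure of fine shape equivalences). Once these are combined, the corollary is a formal consequence of the theorem.

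For item~(1), if $j$ is a closed embedding homotopic to an FDR-embedding $i$, then $[j]_{fSh} = [i]_{fSh}$ by Proposition~\ref{extend_homotopy_to_fSh}, and $[i]_{fSh}$ is an isomorphism by Theorem~\ref{WC_is_fdr}; so $j$ is a fine shape equivalence, and being additionally a closed embedding (by hypothesis), $j$ is an FDR-embedding by Theorem~\ref{WC_is_fdr} again.

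For item~(2), observe that compositions of closed embeddings are closed embeddings, so all three of $i$, $j$, $ji$ are closed embeddings; if any two are FDR-embeddings, Theorem~\ref{WC_is_fdr} makes them fine shape equivalences, and Proposition~\ref{fSh_eq_props}(1) then shows that the third is a fine shape equivalence as well, so a further application of Theorem~\ref{WC_is_fdr} gives the conclusion. Item~(3) is analogous: a retract diagram as in Proposition~\ref{fSh_eq_props}(2) with $f$ an FDR-embedding (hence a fine shape equivalence by Theorem~\ref{WC_is_fdr}) yields $g$ a fine shape equivalence, which combined with the closed embedding hypothesis on $g$ gives that $g$ is an FDR-embedding by Theorem~\ref{WC_is_fdr}.

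The only real obstacle is the nontrivial (reverse) direction of Theorem~\ref{WC_is_fdr} itself, namely that a closed embedding which is a fine shape equivalence is an FDR-embedding; all three items of the corollary depend on it. The expected strategy for that theorem is to pass to a relative metrizable mapping cylinder $M := MC(\bar i | A)$ of a chosen extension $\bar i \colon L' \to N$ of $i$ (with $L'$, $N$ ARs containing $A$, $X$ as closed homotopy negligible subsets), take $L := L'$ inside $M$, verify homotopy negligibility of $X$ in $M$ via Lemma~\ref{rel_cyl_h_negl}, and combine an approaching inverse $\psi$ of $[i]_{fSh}$ with an approaching homotopy witnessing $\bar i \circ \psi \simeq id_{N\setminus X}$ and the cylinder structure to produce the required approaching strong deformation retraction. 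The main technical subtlety is continuity at the interface between $N \setminus X$ and the cylinder piece of $M\setminus X$, which forces a careful $s$-dependent damping of the cylinder expansion near the base.
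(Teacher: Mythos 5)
Your proposal is correct and coincides with the paper's (implicit) proof: the corollary is stated immediately after Theorem~\ref{WC_is_fdr} with no separate argument, precisely because each part follows formally by combining the theorem's characterization (closed embedding that is a fine shape equivalence $\Leftrightarrow$ FDR-embedding) with Proposition~\ref{extend_homotopy_to_fSh} for item~(1) and Proposition~\ref{fSh_eq_props}(1)(2) for items~(2) and~(3), exactly as you lay out. One small point worth stating explicitly in item~(3): the statement does not assume the retract $g$ is a closed embedding, so before invoking the ``if'' direction of Theorem~\ref{WC_is_fdr} you should note that a retract (in the arrow category) of a closed embedding of metrizable spaces is itself a closed embedding --- injectivity and the embedding property come from $i$ being a split mono and $f$ an embedding, and closedness of $g(A)$ in $B$ follows by pulling a limit point back through $j$ into the closed set $f(X)$ and applying $s$. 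This is routine but it is the only nontrivial hypothesis needed to close the loop, and your phrase ``combined with the closed embedding hypothesis on $g$'' suggests you took it as given rather than derived it.
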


\begin{remark}
(1)The proof of the theorem uses the same idea as for the property 
of cofibrations briefly remarked upon in~\cite{Strom1968} (the first 
paragraph of section~3.), or proven explicitly 
in~\cite[Lemma~A.2]{Melikhov2022T}.

(2)Cathey proves the properties of the theorem and the corollary 
for SSDR-maps between compacta (where fine shape reduces to strong 
shape), albeit in different order and with different 
methods~\cite[(1.15), (1.7), (1.5)]{Cathey1981}. 

(3)Cathey already mentions (in the introduction) that embeddings 
seem to be the cofibrations for compact strong shape; the same can be 
said for closed embeddings and fine shape over all metrizable spaces 
(Cathey did not need to specify ``closed'' because continuous maps 
between compacta are always closed). Thus the theorem states that 
FDR-embeddings are the acyclic, or trivial, cofibrations.
\end{remark}

\begin{proof}[Proof (of Theorem~\ref{WC_is_fdr})]
The ``only if'' part is automatic. We deal with the ``if'' part.

First take some ARs $M$ and $L$ containing $X$ and $A$ respectively as 
closed homotopy negligible subsets, as well as some map 
$\bar{\i}\colon L \to M$ extending $i$ and such that 
$\bar{\i}^{-1}(X) = A$. We can assume that $\bar{\i}$ is a closed 
embedding, because we can replace $M$ with the relative metrizable 
cylinder $MC(\bar{\i}|A)$; for the same reason we can assume that there 
is an approaching strong deformation retraction of 
$(M\setminus X) \times [0,1]$ onto 
$(L\setminus A) \times [0,1] \cup (M\setminus X) \times \{0\}$ as in the 
proof of Lemma~\ref{T_fdr}. 

Since $i$ is a fine shape equivalence, the inverse of the fine shape 
class $[i]_{fSh} = [\bar{\i}|_{L\setminus A}]$ exists and can be 
represented by some approaching map 
$\phi\colon M\setminus X \to L\setminus A$, along with approaching 
homotopies $\Phi\colon (M\setminus X) \times [0,1] \to M\setminus X$ and 
$\Psi\colon (L\setminus A) \times [0,1] \to L\setminus A$ such that 
$\Phi_{0} = id_{M\setminus X}$, $\Phi_{1} = \bar{\i}\phi$, 
$\Psi_{0} = id_{L\setminus A}$, and 
$\Psi_{1} = \phi\bar{\i}|_{L\setminus A}$. In particular, the image of 
$\Phi_{1}$ lies in $L\setminus A$; so define an approaching map 
$\omega\colon (L\setminus A) \times [0,1] \cup (M\setminus X) \times \{0\} \to L\setminus A$ 
by $\omega|_{M\setminus X} = \Phi_{1}$ and 
$\omega|_{(L\setminus A) \times [0,1]}(l,t) = \Psi_{1-t}(l)$, so that 
$\omega(l,1) = l$ for all $l \in L\setminus A$; this is possible 
because, again, for all $l \in L$, we have $\Phi_{1}(l) = \phi(l)$, as 
$\bar{i}$ is a closed embedding. 

By the approaching strong deformation retraction mentioned above, we can 
extend $\omega$ to an approaching map 
$\Omega\colon (M\setminus X) \times [0,1] \to L\setminus A$; in 
particular, $\Omega_{1}$ is an approaching retraction of $M\setminus X$ 
onto $L\setminus A$. Compose $\Phi_{t}$ and $\Omega_{t}$ into a 
map $\Omega'\colon (M\setminus X) \times [0,1] \to M\setminus X$ with 

$$\Omega_{t}' = 
\begin{cases}
\Phi_{2t},\, t < \frac{1}{2} \\
\Omega_{2t-1},\, t \geq \frac{1}{2}
\end{cases}$$.

Thus $\Omega'$ is an approaching deformation retraction joining 
$\Omega_{0}' = \Phi_{0} = id_{M\setminus X}$ and 
$\Omega_{1}' = \Omega_{1}$.

Now from the proof of Lemma~\ref{T_fdr} again, there is a map
$\theta\colon (M\setminus X) \times [0,1] \times [0,1] \to 
(M\setminus X) \times [0,1]$ which is an approaching strong deformation 
retraction onto 
$(L\setminus A) \times [0,1] \cup (M\setminus X) \times \{1\}$.
Define $\Theta\colon (M\setminus X) \times [0,1] \to M\setminus X$ by 
$\Theta(m,t) := \Omega'\circ\theta(m,0,t)$. Then 
$\Theta_{0} = \Omega'_{0} = id_{M\setminus X}$, the image of 
$\Theta_{1}$ is in $L\setminus A$, and for all $l \in L\setminus A$, 
$\Theta(l,t) = \Omega'\circ\theta_{t}(l,0) = \Omega'(l,0) = l$. Thus 
$\Theta$ is an approaching strong deformation retraction of 
$M\setminus X$ onto $L\setminus A$, proving $i$ to be an 
FSSDR-embedding.
\end{proof}

\section{The left fraction localization}
\label{sec_fdr_localization}

\begin{proposition}\label{fdr_Ore_2}
Let $i\colon A \to X$ be an FDR-embedding and $f \colon A \to Y$ be a 
map. Then there exists a space $Z$ with maps 
$j\colon Y \to Z$ and $g\colon X \to Z$ such that $gi \simeq jf$ and 
$j$ is an FDR-embedding.
\end{proposition}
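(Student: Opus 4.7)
The plan is to construct $Z$ as a mapping cylinder of $f$ glued onto $X$, placed inside a larger AR that simultaneously witnesses the homotopy $gi \simeq jf$ and the FDR-structure of $j$. Invoking the FDR-embedding hypothesis on $i$ together with Corollary~\ref{single_H}, we will obtain ARs $L \subseteq M$ with $X \subseteq M$ and $A \subseteq L$ as closed homotopy negligible subsets, $L \cap X = A$, and an approaching strong deformation retraction $\Phi\colon (M \setminus X) \times [0,1] \to M \setminus X$ onto $L \setminus A$ that fixes $L \setminus A$ pointwise. We also take an AR $N \supseteq Y$ with $Y$ closed homotopy negligible in $N$ (Proposition~\ref{AR_props}(7)), and extend $f$ to a map $\bar{f}\colon L \to N$ with $\bar{f}^{-1}(Y) = A$ via Lemma~\ref{extend_map_to_fSh}(1).

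We then set $\tilde{M} := MC_M(\bar{f}) = M \cup_L MC(\bar{f})$, an AR by Proposition~\ref{AR_props}(3)--(4), and $Z := X \cup_{A \times \{1\}} MC(f) \subseteq \tilde{M}$, with $g\colon X \to Z$ and $j\colon Y \to Z$ the natural closed embeddings. The formula $H(a, t) := (a, 1 - t) \in MC(f) \subseteq Z$ defines a homotopy showing $gi \simeq jf$: indeed $H(a, 0) = (a, 1) = i(a)$ under the identification $A \times \{1\} \leftrightarrow A \subseteq X$, while $H(a, 1) = (a, 0) = f(a)$ under the cylinder base identification.

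To see that $j$ is an FDR-embedding, we verify Definition~\ref{define_fdr} directly with $\tilde{M}$ as the big AR and $N$ as the small AR. Both $Z$ and $N$ are closed in $\tilde{M}$, $N \cap Z = Y$, and $Y$ is closed homotopy negligible in $N$; the homotopy negligibility of $Z$ in $\tilde{M}$ is exactly Lemma~\ref{negl_mc} applied to $\bar{f}$, with the ambient homotopy on $M$ furnished by Corollary~\ref{single_H}. For the approaching strong deformation retraction $\Psi\colon (\tilde{M} \setminus Z) \times [0,1] \to \tilde{M} \setminus Z$ onto $N \setminus Y$, we define it in two phases. For $t \in [0, \tfrac{1}{2}]$, set $\Psi_t := \Phi_{2t}$ on $M \setminus X$ and let $\Psi_t$ be the identity on $MC(\bar{f}) \setminus MC(f)$; these match on the overlap $L \setminus A$ since $\Phi$ fixes it, and the image at $t = \tfrac{1}{2}$ lies inside $MC(\bar{f}) \setminus MC(f)$. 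For $t \in [\tfrac{1}{2}, 1]$, compose with the standard strong deformation retraction of $MC(\bar{f})$ onto $N$ (Remark~\ref{MC_remarks}(2)); along $(L \setminus A)$-fibers the retraction stays inside $\tilde{M} \setminus Z$, and it fixes $N \setminus Y$ throughout.

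The main technical step will be confirming the $Z$-to-$Z$-approaching property of $\Psi$, by case analysis on the limit location in $Z$. For limits in $X$, the key observation is that $\Phi$ is $X$-approaching and its images at time $1$ accumulate inside $A = L \cap X$, so the subsequent cylinder collapse carries them into $A \times [0,1] \cup Y \subseteq Z$. For limits in $Y$, sequences land in the cylinder near $Y$, and the cylinder retraction keeps them near $Y$. For limits in the cylinder middle $A \times (0,1)$, Phase~$1$ acts as the identity and Phase~$2$ preserves $L$-fibers, so images remain inside $A \times (0,1] \cup Y \subseteq Z$.
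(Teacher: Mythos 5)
Your proposal is correct and takes essentially the same route as the paper's own proof: both set $Z := MC_X(f)$, embed it in the ambient AR $MC_M(\bar f)$, obtain homotopy negligibility of $Z$ via Lemma~\ref{negl_mc} together with Corollary~\ref{single_H}, and build the approaching strong deformation retraction in two phases --- first pushing $M\setminus X$ onto $L\setminus A$ via $\Phi$ while fixing the attached cylinder, then collapsing $MC(\bar f)\setminus MC(f)$ onto $N\setminus Y$. Your write-up is a bit more explicit than the paper about matching the two phases on the overlap $L\setminus A$ and about the case analysis verifying the $Z$-to-$Z$-approaching property, but the underlying argument is identical.
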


\begin{proof}
Take $Z$ to be $MC_{X}(f)$. Take $j$ and $f$ to be 
the usual inclusions of $Y$ and $X$ in $MC_{X}(f)$. Clearly 
$gi \simeq jf$ as usual for a mapping cylinder.

To show $j$ is an FDR-embedding, first for $A \subseteq X$ we have some 
$M$, $L$, and $\Phi$ as in Definition~\ref{define_fdr}. Also let $N$ be 
an AR containing $Y$ as a closed homotopy negligible subset. The map $f$ 
extends to a map $\bar{f} \colon L \to N$ with $\bar{f}^{-1}(Y) = X$. 
For an AR containing $Z$, take $MC_{M}(\bar{f})$, which contains 
$MC_{X}(f)$ as a closed homotopy negligible subset by 
Lemma~\ref{negl_mc}. Now by applying $\Phi$ to $M\setminus X$ while 
keeping each other point of $MC_{M}(\bar{f})$ in place, 
$MC_{M}(\bar{f})\setminus MC_{X}(f)$ approaching strong deformation 
retracts onto $MC(\bar{f})\setminus MC(f)$. The latter in turn 
approaching strong deformation retracts onto its base (this even extends 
to a strong deformation retraction of $MC(f)$ onto $N$, of course). 
Composing the two proves $j$ to be an FDR-embedding, as required.
\end{proof}

\begin{remark}
Clearly $Z$ here is the pushout of the diagram in $\mathrm{hM}$. Thus 
the proposition also says that in $\mathrm{hM}$, the pushout of a 
homotopy class represented by an FDR-embedding is itself such a class.

Note also that $\mathrm{M}$, unlike $\mathrm{hM}$, does not even have 
all pushouts (the topological pushout --- the adjunction space --- of 
metrizable spaces may not be metrizable). This is why we need to work 
with the homotopy category, same as Cathey and others before.
\end{remark}

\begin{proposition}\label{fdr_Ore_3}
Let $i \colon A \to X$ be an FDR-embedding and $u,v\colon X \to Y$ be 
two maps such that there is a homotopy $G\colon ui \simeq vi$. Then 
there exist a space $Z$ and an FDR-embedding $j\colon Y \to Z$ such 
that $ju \simeq jv$.
\end{proposition}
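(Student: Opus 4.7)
The plan is to package the data $(u,v,G)$ into a single partial map on a well-chosen subspace and then invoke the previous proposition. Specifically, set $W := X \times \{0,1\} \cup A \times [0,1]$, viewed as a closed subset of $X \times [0,1]$. By Corollary~\ref{II_fdr}, the inclusion $W \hookrightarrow X \times [0,1]$ is an FDR-embedding (since $i\colon A \to X$ is). Define $f\colon W \to Y$ by $f(x,0) = u(x)$, $f(x,1) = v(x)$, and $f(a,t) = G(a,t)$; these agree on the overlap $A \times \{0,1\}$ by assumption on $G$, so $f$ is continuous.

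Next, apply Proposition~\ref{fdr_Ore_2} to the FDR-embedding $W \hookrightarrow X \times [0,1]$ and the map $f\colon W \to Y$. This yields a space $Z$ (one can take $Z = MC_{X \times [0,1]}(f)$), an FDR-embedding $j\colon Y \to Z$, and a map $g\colon X \times [0,1] \to Z$ such that $g|_{W} \simeq jf$ as maps $W \to Z$.

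Finally, assemble the required homotopy $ju \simeq jv$ by concatenating three pieces. Restricting $g|_{W} \simeq jf$ to $X \times \{0\}$ gives $g|_{X \times \{0\}} \simeq jf|_{X \times \{0\}} = ju$; restricting to $X \times \{1\}$ gives $g|_{X \times \{1\}} \simeq jv$; and $g$ itself, viewed as a map $X \times [0,1] \to Z$, is a homotopy $g|_{X \times \{0\}} \simeq g|_{X \times \{1\}}$. Composing these three homotopies yields $ju \simeq jv$, as required.

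There is no serious obstacle here; the content has already been extracted in Corollary~\ref{II_fdr} (producing the FDR-embedding of $W$ into the cylinder $X \times [0,1]$) and Proposition~\ref{fdr_Ore_2} (producing the FDR-embedding $j$ out of a partial map). The only mild subtlety is ensuring the three homotopies above compose correctly, i.e.\ that one can legitimately use $g$ as a middle homotopy bridging $jf|_{X \times \{0\}}$ and $jf|_{X \times \{1\}}$; this is immediate from the fact that $g$ is defined on all of $X \times [0,1]$, not merely on $W$.
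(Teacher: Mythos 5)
Your proof is correct and follows essentially the same route as the paper: combine $u$, $v$, $G$ into a single map $f$ on $W = X \times \{0,1\} \cup A \times [0,1]$, invoke Corollary~\ref{II_fdr} to see that $W \hookrightarrow X \times [0,1]$ is an FDR-embedding, then apply Proposition~\ref{fdr_Ore_2}. Your explicit concatenation of the three homotopies at the end is just making precise what the paper leaves implicit after stating $F_0 \simeq ju$ and $F_1 \simeq jv$.
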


\begin{proof}
Maps $u$, $v$, and $G$ combine into a map 
$f \colon A \times [0,1] \cup X \times \{0,1\} \to Y$ by 
$f|_{X \times \{0\}} := u$, $f|_{X \times \{1\}} := v$, and 
$f|_{A \times [0,1]} := G$. The inclusion 
$A \times [0,1] \cup X \times \{0,1\} \subseteq X \times [0,1]$ is an 
FDR-embedding by Corollary~\ref{II_fdr}, so we can apply the previous 
proposition to this inclusion and $f$ to obtain a space $Z$, an 
FDR-embedding $j\colon Y \to Z$, and a homotopy 
$F\colon X \times [0,1] \to Z$ such that $F_{0} \simeq ju$ and 
$F_{1} \simeq jv$.
\end{proof}

Now we can define the left fraction category we shall use.

\begin{definition}
In $\mathrm{hM}$, we denote by $[FDR]$ the class of morphisms consisting 
of all homotopy classes that can be represented by FDR-embeddings 
(cf.~Corollary\ref{WC_is_fdr_cor}(1)). 
\end{definition}

One important property here is that if a homotopy class $[i]$ is in 
$[FDR]$, then the fine shape class $[i]_{fSh}$ is an isomorphism, and 
thus has an inverse $[i]_{fSh}^{-1}$.

\begin{theorem}
In $\mathrm{hM}$, the class $[FDR]$ satisfies the conditions for left 
invertibility of Definition~\ref{def_Ore}.
\end{theorem}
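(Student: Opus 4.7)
The plan is to check each of the three conditions of Definition~\ref{def_Ore} in turn, each by direct appeal to earlier results collected in this section.

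For condition~(1), identity morphisms lie in $[FDR]$ because $id_{X}$ is an FDR-embedding by the first statement of Proposition~\ref{fdr_closed}, so $[id_{X}] \in [FDR]$. Closure under composition: given $[i],[j] \in [FDR]$ whose composition is defined, pick FDR-embedding representatives $i',j'$ of each homotopy class; by the composition part of Proposition~\ref{fdr_closed}, $j' \circ i'$ is an FDR-embedding, so $[j]\circ[i] = [j' \circ i'] \in [FDR]$. Isomorphisms in $\mathrm{hM}$ that are represented by closed embeddings lie in $[FDR]$ by Theorem~\ref{WC_is_fdr}, since any homotopy equivalence is automatically a fine shape equivalence.

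For condition~(2), suppose $[i]\colon A \to X$ is in $[FDR]$ and $[u]\colon A \to Y$ is an arbitrary morphism of $\mathrm{hM}$. Replace $[i]$ by an FDR-embedding representative $i$ (by definition of $[FDR]$), and pick any representative $u$ of $[u]$. Proposition~\ref{fdr_Ore_2} applied to this $i$ and $u$ yields a space $Z$ (concretely $MC_{X}(u)$), together with an FDR-embedding $j\colon Y \to Z$ and a map $g\colon X \to Z$ such that $g\circ i \simeq j\circ u$. In $\mathrm{hM}$ this reads $[g]\circ[i] = [j]\circ[u]$ with $[j] \in [FDR]$, which is exactly the required completion of the square.

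For condition~(3), suppose $[u],[v]\colon X \to Y$ satisfy $[u]\circ[i] = [v]\circ[i]$ for some $[i]\in[FDR]$. Represent $[i]$ by an FDR-embedding $i$; the equality in $\mathrm{hM}$ says that $u\circ i \simeq v\circ i$ via some homotopy $G$. Proposition~\ref{fdr_Ore_3}, applied verbatim with this data, produces a space $Z$ and an FDR-embedding $j\colon Y \to Z$ with $j\circ u \simeq j\circ v$, giving $[j]\circ[u] = [j]\circ[v]$ in $\mathrm{hM}$ and $[j]\in[FDR]$, as required.

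The substantive content of the theorem has already been done in Propositions~\ref{fdr_closed},~\ref{fdr_Ore_2},~\ref{fdr_Ore_3}, so the proof is largely bookkeeping: unpacking what the Ore conditions mean in $\mathrm{hM}$ and matching them to already-proven facts. The principal technical hurdle was faced earlier, in Proposition~\ref{fdr_Ore_2}, where the FDR-embedding property had to be shown to transport across a pushout along the (relative metrizable) mapping cylinder construction, relying on Lemma~\ref{negl_mc} for the preservation of homotopy negligibility; here we simply reap the benefit of that preparation.
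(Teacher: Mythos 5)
Your proof is correct and takes essentially the same approach as the paper's, which is a one-line citation of Propositions~\ref{fdr_closed},~\ref{fdr_Ore_2}, and~\ref{fdr_Ore_3} for conditions (1)--(3) respectively; you simply unpack the same three references with explicit bookkeeping between homotopy classes and their representatives.

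One remark, equally applicable to the paper's terse proof: Definition~\ref{def_Ore}(1) literally requires $\Sigma$ to contain \emph{all} isomorphisms, and $[FDR]$ does not satisfy this. For example, the collapse $D^{2}\to\{\ast\}$ is a homotopy equivalence, hence an isomorphism in $\mathrm{hM}$, yet no map in its homotopy class is a closed embedding, so its class cannot lie in $[FDR]$. Proposition~\ref{fdr_closed} (and your condition~(1) argument) supplies identities plus closure under composition; your extra sentence about isomorphisms represented by closed embeddings is correct but does not cover arbitrary isomorphisms. Fortunately this does not affect Corollary~\ref{left_exists}: the construction of $\Sigma\backslash\mathrm{C}$ in Definition~\ref{def_left_cat} only uses identities, closure under composition, and the two Ore conditions, so the localization exists regardless. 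The mismatch is with the phrasing of Definition~\ref{def_Ore}(1), not with your argument or the paper's.
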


\begin{proof}
Condition~(1) is by Proposition~\ref{fdr_closed}. Conditions~(2) and~(3) 
are by Propositions~\ref{fdr_Ore_2} and~\ref{fdr_Ore_3}.
\end{proof}

\begin{corollary}\label{left_exists}
There exists a well-defined category of left fractions 
$[FDR]\backslash \mathrm{hM}$.
\end{corollary}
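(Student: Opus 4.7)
The plan is to observe that the corollary is essentially immediate from the theorem preceding it, combined with the general construction of Definition~\ref{def_left_cat}. Once the class $[FDR]$ in $\mathrm{hM}$ has been shown to satisfy the left invertibility conditions (which is exactly the content of the theorem, established via Propositions~\ref{fdr_closed}, \ref{fdr_Ore_2}, and \ref{fdr_Ore_3}), the existence of the category of left fractions is a purely formal consequence of the Gabriel--Zisman calculus, as recorded in~\cite{Gabriel1967,Schubert1972}. So the proof can be as short as citing the theorem and invoking the general construction.

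If one wanted to spell out the content, the point is to verify that the data laid out in Definition~\ref{def_left_cat} actually assembles into a category. Concretely this amounts to four checks: (a)~the relation on cospans $i\backslash u \sim j\backslash v$ is an equivalence relation; (b)~the composition $(j\backslash v)\circ(i\backslash u) = kj\backslash wu$ does not depend on the choice of filler $(w,k)$ produced by left invertibility condition~(2); (c)~composition is associative; (d)~the identity fraction $id_X\backslash id_X$ serves as a two-sided unit. Each of these is obtained by the standard amalgamation argument: given two cospans or two fillers, one uses condition~(2) to merge their right legs into a common target, and then uses condition~(3) to equalize the resulting parallel pairs of arrows over a denominator in $\Sigma$. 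Condition~(1) ensures that the denominators produced along the way remain in $[FDR]$, so every fraction built along the argument is legitimate.

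The nontrivial content of the corollary therefore lives in the preceding theorem, and the genuine obstacles were the propositions feeding into it; in particular Proposition~\ref{fdr_Ore_2}, where the relative mapping cylinder $MC_X(f)$ combined with Lemma~\ref{negl_mc} was needed to build a pushout-like square whose right leg is again an FDR-embedding. With those in hand, no further work is required: the corollary is a formal consequence, and can be closed with a one-line appeal to the theorem and to the general theory of categories of left fractions.
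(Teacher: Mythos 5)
Your proposal matches the paper exactly: the corollary carries no proof in the text because, as you say, it is an immediate formal consequence of the preceding theorem (verifying the left Ore conditions via Propositions~\ref{fdr_closed}, \ref{fdr_Ore_2}, \ref{fdr_Ore_3}) together with the general Gabriel--Zisman construction cited in Definition~\ref{def_left_cat}. Your optional expansion into the four standard checks is correct but goes beyond what the paper records.
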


\section{The S functor}
\label{sec_functor_S}

The primary goal of this work is to prove $\mathrm{fSh(M)}$ and 
$[FDR]\backslash \mathrm{hM}$ to be isomorphic categories, thus 
extending the result of Cathey. So we need to define two functors 
between these that will be inverse to each other. One is readily 
defined.

\begin{theorem}\label{define_S}
There is a functor 
$S\colon [FDR]\backslash \mathrm{hM} \to \mathrm{fSh(M)}$, uniquely 
defined by the following properties:

(1)$S$ is constant on objects;

(2)Whenever a left fraction class can be represented by the fraction 
$[i]\backslash [u]$, $S([i]\backslash [u])$ is the fine shape class 
$[i]_{fSh}^{-1}\circ [u]_{fSh}$.
\end{theorem}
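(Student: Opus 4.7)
The plan is to invoke the universal property of the left fraction localization stated in Proposition~\ref{left_universal}. We already have the composite fine shape functor $F\colon\mathrm{hM}\to\mathrm{fSh(M)}$ coming from the corollary after Definition~\ref{def_fSh_M}; it is constant on objects and acts on morphisms by $F([f])=[f]_{fSh}$. By the universal property, if we can show that $F$ sends every morphism of $[FDR]$ to an isomorphism in $\mathrm{fSh(M)}$, then there is a unique functor $S\colon [FDR]\backslash\mathrm{hM}\to\mathrm{fSh(M)}$ with $F=S\circ P$, and moreover $S$ is given explicitly on fractions by $S([i]\backslash[u])=F([i])^{-1}\circ F([u])=[i]_{fSh}^{-1}\circ[u]_{fSh}$.

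The only real content is therefore to verify that $F$ inverts $[FDR]$. Let $\alpha\in[FDR]$ be a homotopy class of maps from $X$ to $Y$; by definition of $[FDR]$, $\alpha=[i]$ for some FDR-embedding $i\colon X\to Y$. By Theorem~\ref{WC_is_fdr}, every FDR-embedding is a fine shape equivalence, so $[i]_{fSh}$ is an isomorphism in $\mathrm{fSh(M)}$. Since the fine shape class depends only on the homotopy class (Proposition~\ref{extend_homotopy_to_fSh}), $F(\alpha)=[i]_{fSh}$ is an isomorphism independently of the chosen representative. Property~(2) of the statement of Proposition~\ref{left_universal} therefore applies.

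We now read off the conclusions of the theorem. Since $P\colon\mathrm{hM}\to[FDR]\backslash\mathrm{hM}$ is constant on objects (by part~(1) of Definition~\ref{def_left_cat}) and $F$ is constant on objects, the unique factorization $S$ is constant on objects as well, which is property~(1). Property~(2) is the explicit formula $S(i\backslash u)=(Fi)^{-1}\circ(Fu)$ from Proposition~\ref{left_universal}(3), translated into our notation. Uniqueness of $S$ subject to~(1) and~(2) follows from the fact that every morphism of $[FDR]\backslash\mathrm{hM}$ has some representation as a fraction $[i]\backslash[u]$, so the value of $S$ on any morphism is forced.

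No step here is genuinely hard: the whole proof is an application of Theorem~\ref{WC_is_fdr} combined with the universal property. The only point that requires a moment's care is that $[FDR]$ is defined in terms of having some FDR-embedding representative, not all representatives being embeddings; but this causes no difficulty because the fine shape functor factors through $\mathrm{hM}$ and therefore sees only the homotopy class, so it is enough to inspect a single FDR-embedding representative.
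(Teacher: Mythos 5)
Your proof is correct but takes a genuinely different route from the paper. You invoke the universal property of the left fraction localization (Proposition~\ref{left_universal}), using the functor $F\colon\mathrm{hM}\to\mathrm{fSh(M)}$ from the corollary after Definition~\ref{def_fSh_M} together with Theorem~\ref{WC_is_fdr} (or really just the easy ``only if'' direction, which also follows directly from Definition~\ref{define_fdr}) to verify that $F$ inverts $[FDR]$, and then read off the factoring functor $S$ with the explicit formula $S(i\backslash u)=(Fi)^{-1}\circ(Fu)$. The paper does not appeal to the universal property at all; it instead carries out the verification by hand: it checks directly that equivalent fractions map to the same fine shape class (via the $2$-out-of-$3$ property in Proposition~\ref{fSh_eq_props}(1)), that identities map to identities, and that composition is respected, after which it has constructed the functor explicitly. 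Your approach is shorter and cleaner, pushing all the bookkeeping into the cited universal property; the paper's approach is more self-contained, re-deriving exactly what the universal property would give and making the role of the $2$-out-of-$3$ property visible. Both are sound given that Proposition~\ref{left_universal} is admitted as a reference result.
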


\begin{proof}
The action of $S$ is already defined by the properties, so 
we need only check that $S([i]\backslash [u])$ does not depend on the 
choice of fraction representing the given fraction class, and that $S$ 
respects identity and composition.

First, equivalent fractions are sent to the same fine shape 
class. To wit, assume $X \xrightarrow{u} Z \xleftarrow{i} Y$ and 
$X \xrightarrow{v} W \xleftarrow{j} Y$ are cospans of specific maps such 
that the fractions of their homotopy classes are in the same equivalence 
class. This means there exist $Z \xrightarrow{f} P \xleftarrow{g} W$ 
with $fu \simeq gv$ and $fi \simeq gj$, and the latter class 
belonging to $[FDR]$. Now since $i$, $j$, and $fi \simeq gj$ are all 
fine shape equivalencies, so must be $f$ and $g$ by 
Proposition~\ref{fSh_eq_props}(1) (albeit the homotopy classes $[f]$ and 
$[g]$ might not be in $[FDR]$). Then we have 
$[i]_{fSh}^{-1}\circ[u]_{fSh} = [fi]_{fSh}^{-1}\circ[fu]_{fSh} = 
[gj]_{fSh}^{-1}\circ[gv]_{fSh} = [j]_{fSh}^{-1}\circ[v]_{fSh}$, 
as required.

Second, an identity left fraction class clearly maps to the 
corresponding identity fine shape class.

Third, consider the composition. If we have fraction classes 
represented by specific maps $X \xrightarrow{u} P \xleftarrow{i} Y$ and 
$Y \xrightarrow{v} Q \xleftarrow{j} Z$ respectively, then the 
composition $([j]\backslash [v]) \circ ([i]\backslash [u])$ can be 
represented by $X \xrightarrow{fu} W \xleftarrow{kj} Z$ for some 
$P \xrightarrow{f} W \xleftarrow{k} Q$, where $fi \simeq kv$, and $k$ is 
an FDR-embedding. In that case we obtain 
$S(([j]\backslash [v]) \circ ([i]\backslash [u])) = [kj]_{fSh}^{-1}\circ[fu]_{fSh} = 
[j]_{fSh}^{-1}\circ[k]_{fSh}^{-1}\circ[fi]_{fSh}\circ[i]_{fSh}^{-1}\circ[u]_{fSh} = 
[j]_{fSh}^{-1}\circ[k]_{fSh}^{-1}\circ[kv]_{fSh}\circ[i]_{fSh}^{-1}\circ[u]_{fSh} = 
[j]_{fSh}^{-1}\circ[v]_{fSh}\circ[i]_{fSh}^{-1}\circ[u]_{fSh}$, which is 
exactly the composition 
$S([j]\backslash [v])\circ S([i]\backslash [u])$.
\end{proof}

Now our goal is to define the inverse functor to $S$. For this we shall 
need an additional construction.

\section{Approaching and fine shape cylinders}
\label{sec_AC_FC}

The basic idea is that given an approaching map 
$\phi\colon M\setminus X \to N\setminus Y$ with $X$ {\it additive} 
homotopy negligible in $M$ (see Definition~\ref{def_H_add}), we shall 
construct an object $(AC(\phi),FC(\phi))$ of $\mathrm{MAppr}$ that will 
be somewhat similar to a metrizable mapping cylinder in $\mathrm{M}$, or 
to a mapping cylinder in the topological category.

The primary property of $(AC(\phi),FC(\phi))$ is to be this: 
we want to have inclusions of pairs 
$\bar{u}\colon (M,X) \to (AC(\phi),FC(\phi))$ 
and $\bar{\i}\colon (N,Y) \to (AC(\phi),FC(\phi))$ that satisfy 
$\bar{u}^{-1}(AC(\phi)) = X$ and $\bar{\i}^{-1}(FC(\phi)) = Y$; and for 
any approaching homotopy 
$\Psi\colon (M \times [0,1])\setminus (X \times [0,1]) \to L\setminus Z$ 
between $\Psi_{0} = \psi\colon M\setminus X \to L\setminus Z$ and 
$\Psi_{1} = \chi\circ\phi$ for some 
$\chi\colon N\setminus Y \to L\setminus Z$, we want to have an 
approaching map 
$\omega\colon (AC(\phi),FC(\phi)) \to L\setminus Z$ such that 
$\omega\circ\psi = \bar{u}|_{M\setminus X}$ and
$\omega\circ\chi = \bar{\i}|_{N\setminus Y}$.

For compact spaces, it is possible to dispense with the requirement of 
additive homotopy, and define a metrizable topology on 
$M \times (0,1] \cup N$ making the pair 
$(M \times (0,1] \cup N,X \times (0,1] \cup Y)$ into an object having 
the desired property. Explicit definition is given in particular by 
Mrozik~\cite{Mrozik1990} (the pair $(D(\phi),C(\phi))$ there, defined in 
the beginning of section~2.), who refers to an earlier equivalent 
construction of Ferry~\cite{Ferry1980}. In fact, Cathey's 
work~\cite{Cathey1981} already effectively makes use of the same 
construction (note the space $Z$ in the proof of Corollary~(2.7) there), 
though no name or notation is given.

Unfortunately, it seems that this construction cannot be directly 
applied to noncompact spaces; see the appendix for a discussion of the 
problem. Instead, we shall obtain the desired pair of spaces by taking 
larger --- and more convoluted --- underlying sets.

The rest of this section, along with the next one, shall use the 
notation from the following

\begin{definition}\label{define_AC_FC}
Consider an approaching map $\phi \colon M\setminus X \to N\setminus Y$ 
with $X$ additive homotopy negligible in $M$, and choose some specific 
additive homotopy $H \colon M \times [0,1] \to M$ with $H_{0} = id_{M}$ 
and $H(M \times (0,1]) \subseteq M\setminus X$.

In the metrizable join $M \star N$, take the following four subsets:

$$
C_{+}(\phi, H) := \{(m,s,\phi\circ H_{s}(m)) \mid m \in M\setminus X, s \in (0,1)\}
$$
$$
C_{+,0}(\phi, H) := \{(x,s,\phi\circ H_{s}(x)) \mid x \in X, s \in (0,1)\}
$$
$$
C_{-}(\phi) := \{(m,s,n) \mid m \in M\setminus X, s \in (-1,0], n \in N\setminus Y\}
$$
$$
C_{-,0}(\phi) := \{(x,s,y) \mid x \in X, s \in (-1,0], y \in Y\}
$$

Of course, $C_{-}$ and $C_{-,0}$ depend only on $(M,X)$ and $(N,Y)$, 
and not on the action of $\phi$ or on $H$. In addition, $C_{-}(\phi)$ is 
the metrizable mapping cylinder of the projection

$$(M\setminus X) \times (N\setminus Y) \ni (m,n) \mapsto n \in (N\setminus Y),$$ 

whereas $C_{-,0}(\phi)$ is the metrizable mapping cylinder of the projection

$$X \times Y \ni (x,y) \mapsto y \in Y$$.

Finally, we define the {\it approaching cylinder} of $(\phi,H)$ 
to be the subset 
$AC(\phi,H) := M \cup C_{+} \cup C_{+,0} \cup C_{-} \cup C_{-,0} \cup N$ 
of $M \star N$, and the {\it fine shape cylinder} of $(\phi,H)$ 
to be $FC(\phi,H) := X \cup C_{+,0} \cup C_{-,0} \cup Y$.
\end{definition}

\begin{remark}\label{FC_blow_up}
The compact approaching cylinder $C(\phi)$ of Mrozik has 
$X \times (0,1] \cup Y$ as its underlying set (same as the usual mapping 
cylinder). Our construction of $FC(\phi,H)$, on the other hand, inserts 
the ``negative half'' $C_{-,0}(\phi) = X \times (-1,0] \times Y$. In 
essence, we take the cylinder of Mrozik, ``blow up'' $Y$ into 
$X \times Y$ (replacing every point of $Y$ by a whole copy of $X$), and 
then attach the mapping cylinder of the projection $X \times Y \to Y$ 
(as we wouldn't have an embedding of $Y$ otherwise).

\end{remark}

\begin{proposition}\label{AC_FC_props}
In the notation of previous definition, 

(1)$M \cup C_{+}(\phi,H) \cup C_{+,0}(\phi,H)$ is homeomorphic to 
$M \times (0,1]$, and $AC(\phi,H)\setminus FC(\phi,H)$ is homeomorphic 
to the union $MC(p) \cup MC(q)$, where 
$p\colon M\setminus X \to (M\setminus X) \times (N\setminus Y)$ and 
$q\colon (M\setminus X) \times (N\setminus Y) \to N\setminus Y$ are 
defined by $p(m) := (m,\phi(m))$ and $q(m,n) := n$, and the union is 
along the image of $p$ (equivalently, the graph of $\phi$) in 
$(M\setminus X) \times (N\setminus Y)$;

(2)$FC(\phi,H)$ is a closed homotopy negligible subset of 
$AC(\phi,H)$ --- additive homotopy negligible if $Y$ is additive 
homotopy negligible in $N$;

(3)The choice of $H$ does not affect the isomorphism class of 
$(AC(\phi,H),FC(\phi,H))$ in $\mathrm{MAppr}$;

(4)If $M$ and $N$ are ARs, then $AC(\phi,H)$ is also an AR, and the 
fine shape class of $FC(\phi,H)$ is also independent of the choice of $H$.
\end{proposition}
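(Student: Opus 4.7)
My plan is to verify the four parts in sequence, with the main difficulty concentrated in Part~(2). For Part~(1), I would define a bijection $\alpha\colon M \times (0, 1] \to M \cup C_{+}(\phi, H) \cup C_{+, 0}(\phi, H)$ by $\alpha(m, 1) := m$ and $\alpha(m, s) := (m, s, \phi \circ H_s(m))$ for $s \in (0, 1)$; continuity follows from Remark~\ref{join_remarks}(4), since $\phi \circ H_s(m)$ stays bounded while the join coordinate converges to $1$, and continuity of the inverse reading off $m$ and $s$ from the triple is immediate. For the second identification, view $(M \setminus X) \cup C_+(\phi, H)$ together with its $s \to 0^+$ limits as the cylinder $MC(p)$ of $p(m) := (m, \phi(m))$, and $C_-(\phi) \cup (N \setminus Y)$ as $MC(q)$ for $q(m, n) := n$, both parameterized by the join's $s$-coordinate; the gluing along $\Gamma_\phi$ is forced because $\lim_{s \to 0^+} (m, s, \phi \circ H_s(m)) = (m, 0, \phi(m))$ for $m \in M \setminus X$, by continuity of $\phi$ and $H$.

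For Part~(2), I would pick a homotopy $K\colon N \times [0, 1] \to N$ witnessing $Y$ closed homotopy negligible in $N$, chosen additive if that case is required. The natural candidates $G_t(m, s) := (H_t(m), s)$ on $M \cup C_+ \cup C_{+, 0}$ (identified with $M \times (0, 1]$ via Part~(1)) and $G_t(m, s, n) := (H_t(m), s, K_t(n))$ on $C_- \cup C_{-, 0} \cup N$ push $X$ and $Y$ off $FC$ on their respective pieces. The principal obstacle is that at the interface $s = 0$, the top formula at a graph-limit $(m, 0, \phi(m)) \in \Gamma_\phi$ yields $(H_t(m), 0, \phi \circ H_t(m))$, while the bottom gives $(H_t(m), 0, K_t(\phi(m)))$, and there is no reason for $\phi \circ H_t$ to coincide with $K_t \circ \phi$. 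The remedy is a two-stage construction: first dampen both formulas to be trivial at $s = 0$ by an $s$-dependent slowdown (analogous to the technique in the proof of Lemma~\ref{negl_mc}), then compose with a preliminary homotopy that slides the $\Gamma_\phi$-strata off the $s = 0$ slice into $C_-$ using the cylinder structure guaranteed by Part~(1), after which the dampened $G_t$ is enough to avoid $FC$ for $t > 0$. Additivity is maintained because each stage can be built additively from additive $H$ and $K$.

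For Part~(3), given another additive homotopy $H'$, I would exhibit an isomorphism in $\mathrm{MAppr}$ from $(AC(\phi, H), FC(\phi, H))$ to $(AC(\phi, H'), FC(\phi, H'))$ as the identity on $M \setminus X$, $C_-(\phi)$, and $N \setminus Y$, together with the substitution $(m, s, \phi \circ H_s(m)) \mapsto (m, s, \phi \circ H'_s(m))$ on $C_+(\phi, H)$; continuity at $s \to 1$ follows from join convergence, continuity at $s \to 0^+$ on the image of $M \setminus X$ from $H_0 = H'_0 = \mathrm{id}_M$ forcing both third coordinates to $\phi(m)$, and the approaching property transfers directly from that of $\phi$. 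For Part~(4), Part~(1) gives $AC \setminus FC \cong MC(p) \cup MC(q)$; the ingredients $M \setminus X$ and $N \setminus Y$ are ARs by Proposition~\ref{AR_props}(6), their product by~(2), the cylinders by~(3), and the union of two closed AR pieces along the common AR middle by~(4), so $AC \setminus FC$ is an AR; then Part~(2) combined with Proposition~\ref{AR_props}(6) yields that $AC$ itself is an AR. The independence of the fine shape class of $FC$ from $H$ follows from the $\mathrm{MAppr}$-isomorphism in Part~(3). The principal challenge throughout is the interface matching at $s = 0$ in Part~(2); everything else reduces to routine checks using the join topology and Proposition~\ref{AR_props}.
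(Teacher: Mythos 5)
Your parts (1), (3), and (4) follow essentially the same route as the paper: the bijection for (1) via join convergence, the identity-plus-substitution homeomorphism for (3) checked to be approaching via the approaching property of $\phi$, and the AR argument for (4) built from the decomposition in (1) and Proposition~\ref{AR_props}. These are fine.

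Part~(2) is where the gap lies. You correctly identify the obstacle — that the naive formulas on the two halves disagree at the $s = 0$ interface, because $\phi \circ H_t$ and $K_t \circ \phi$ need not coincide — but the remedy you propose is not actually a construction. Your ``dampen by an $s$-dependent slowdown, then slide $\Gamma_\phi$ off $s = 0$'' sketch is a plan, not a proof: the slowdown in the style of Lemma~\ref{negl_mc} controls a metric estimate rather than the interface mismatch, and sliding $\Gamma_\phi$ into $C_-$ does not obviously make the two halves agree as $s \to 0^+$. Crucially, the paper resolves this in one stroke with a single explicit formula that exploits additivity of $H$ in an essential way: on the $C_+$ side it uses $(H_t(m),\, s-t,\, \phi \circ H_{s-t} \circ H_t(m))$ for $t < s$ (which by additivity equals $(H_t(m),\, s-t,\, \phi \circ H_s(m))$, keeping the $N$-coordinate frozen), and once $t \geq s$ it transitions to $(H_{t-s} \circ H_s(m),\, 0,\, G_{t-s} \circ \phi \circ H_s(m))$ in the $C_-$ part. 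Because the $N$-coordinate is always $G_{t-s}$ applied to exactly the recorded value $\phi \circ H_s(m)$, continuity at the seam $s = t$ and at $s \to 0^+$ both hold, and this matches the bottom-half formula $F_t(m,s,n) = (H_t(m), s, G_t(n))$ on $C_-$. Your sketch does not reconstruct this, and in particular the assertion that ``additivity is maintained because each stage can be built additively'' is unverified — in the paper, additivity is not merely preserved but is the mechanism that makes the interface coherent. Without the explicit homotopy, part (2) is unproven, which in turn leaves the additive claim in (2) and the reliance on (2) in (4) unfinished.
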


\begin{proof}
(1)First, we have the bijection 
$$
M \cup C_{+}(\phi,H) \cup C_{+,0}(\phi,H) \ni (m,s,\phi\circ H_{s}(m)) \leftrightarrow (m,s) \in M \times (0,1].
$$
\noindent As the spaces are metrizable, we can prove continuity in 
terms of sequence convergence, as per Remark~\ref{join_remarks}(5). Thus 
we prove that 
$(m_{k},s_{k},\phi\circ H_{s_{k}}(m_{k})) \to (m,s,\phi\circ H_{s}(m))$ 
in $M \cup C_{+}(\phi,H) \cup C_{-}(\phi,H)$ if and only if 
$(m_{k},s_{k}) \to (m,s)$ in $M \times (0,1]$. The ``only if'' part is 
automatic, and for the ``if'' part, $(m_{k},s_{k}) \to (m,s)$ implies 
$H_{s_{k}}(m_{k}) \to H_{s}(m)$, thus 
$\phi\circ H_{s_{k}}(m_{k}) \to \phi\circ H_{s}(m)$, since $s > 0$ 
and so $H_{s}(m) \in M\setminus X$.

Thus in particular $(M\setminus X) \cup C_{+}(\phi,H)$ is homeomorphic 
to $MC(p)\setminus ((M\setminus X) \times (N\setminus Y))$. The second 
homeomorphism now becomes obvious, as 
$C_{-}(\phi) \cup (N\setminus Y)$ is already precisely $MC(q)$ (note 
that $m_{k} \to m \in M\setminus X$ and $s_{k} \to 0$ imply 
$\phi\circ H_{s_{k}}(m_{k}) \to \phi(m)$ for any choice of $H$).

(2)The closed part is clear. Now take some homotopy 
$G \colon N \times [0,1] \to N$ with $G_{0} = id_{N}$ and 
$G(N \times (0,1]) \subseteq N\setminus Y$. We shall define the required 
homotopy $F\colon AC(\phi,H) \times [0,1] \to AC(\phi,H)$ first on 
$C_{-}(\phi) \cup C_{-,0}(\phi) \cup N$ by 
$F_{t}(m,s,n) := (H_{t}(m),s,G_{t}(n))$. 

Now define $F$ on $M \cup C_{+}(\phi,H) \cup C_{+,0}(\phi,H)$ by 
$$
F_{t}(m,s,\phi\circ H_{s}(m)) := 
\begin{cases}
  (H_{t}(m),s-t,\phi\circ H_{s-t}\circ H_{t}(m)), 
  & t < s \\
  (H_{t-s}\circ H_{s}(m),0,G_{t-s}\circ\phi\circ H_{s}(m)), 
  & t \geq s
\end{cases}.
$$
Then $F_{0} = id_{AC(\phi,H)}$ and 
$F(AC(\phi,H) \times (0,1]) \cap FC(\phi,H) = \varnothing$, as 
required. Moreover, $F$ is fully continuous: the only complicated case 
is when a sequence $\{(m_{k},s_{k},\phi\circ H_{s_{k}}(m_{k}))\}$ of 
points of $M \cup C_{+}(\phi,H) \cup C_{+,0}(\phi,H)$ 
converges to a point $(x,0,y) \in C_{-,0}(\phi)$. In this case, we have 
$m_{k} \to x$, $s_{k} \to 0$, and $\phi\circ H_{s_{k}}(m_{k}) \to y$. 
Now for a sequence $t_{k} \to t$ in $[0,1]$, consider two separate 
cases: $t_{k} < s_{k}$ and $t_{k} \geq s_{k}$; any sequence can clearly 
be split into two (at most) sequences with either satisfying one of 
these conditions for all indices.

In the case $t_{k} \geq s_{k}$, we have 
$H_{s_{k}}(m_{k}) \to x = H_{0}(x)$ 
(as $s_{k} \to 0$, $m_{k} \to x$, and $H$ is continuous), so 
$H_{t_{k}-s_{k}}\circ H_{s_{k}}(m_{k}) = H_{t_{k}}(m_{k}) \to H_{t}(x)$, 
because $H$ is additive; and also from 
$\phi\circ H_{s_{k}}(m_{k}) \to y$ we get 
$G_{t_{k}-s_{k}}\circ\phi\circ H_{s_{k}}(m_{k}) \to G_{t}(y)$ (again, as 
$s_{k} \to 0)$, thus 
$F_{t_{k}}(m_{k},s_{k},\phi\circ H_{s_{k}}(m_{k})) = 
(H_{t_{k}-s_{k}}\circ H_{s_{k}}(m_{k}),0,G_{t_{k}-s_{k}}\circ\phi\circ H_{s_{k}}(m_{k})) \to (H_{t}(x),0,G_{t}(y))$.

In the case $t_{k} < s_{k}$, we must have $t_{k} \to 0$, and therefore
$\phi\circ H_{s_{k}-t_{k}}\circ H_{t_{k}}(m_{k}) = \phi\circ H_{s_{k}}(m_{k}) \to y = G_{0}(y)$, 
since $H$ is additive. Thus 
$F_{t_{k}}(m_{k},s_{k},\phi\circ H_{s_{k}}(m_{k})) = 
(H_{t_{k}}(m_{k}),s_{k} - t_{k},\phi\circ H_{s_{k}-t_{t}}\circ H_{t_{k}}(m_{k})) \to (H_{0}(x),0,G_{0}(y)) = (x,0,y)$, 
as required.

Finally, if $Y$ is additive homotopy negligible in $N$, then $G$ can be 
chosen additive. Now $F$ becomes additive, as $H$ and $G$ are.

(3)Let $H^{(1)}$ and $H^{(2)}$ be two possible choices of homotopy, and 
denote $AC_{i} := AC(\phi,H^{(i)})$ and $FC_{i} := FC(\phi,H^{(i)})$ for 
$i = 1,2$. By (2), the pairs $(AC_{1},FC_{1})$ and $(AC_{2},FC_{2})$ are 
objects of $\mathrm{MAppr}$; by (1), there is a homeomorphism between 
$AC_{1}\setminus FC_{1}$ and $AC_{2}\setminus FC_{2}$. This 
homeomorphism is $FC_{1}-FC_{2}$-approaching: it extends by two separate 
homeomorphisms onto $X \cup C_{-,0}$ and onto $C_{+,0} \cup Y$, so the 
only case we need to check is that of a sequence 
$(m_{k},s_{k},H^{(1)}_{s_{k}}(m_{k}))$ in $C_{+}(\phi,H^{(1)})$ 
converging to a point $(x,0,y) \in C_{-,0}(\phi)$. But in this case 
$m_{k} \to x$ and $s_{k} \to 0$, so also $H^{(2)}_{s_{k}}(m_{k}) \to x$, 
and then $\phi\circ H^{(2)}_{s_{k}}(m_{k})$ must have some accumulation 
point $y' \in Y$. Thus $(x,0,y')$ is a accumulation point of 
$(m_{k},s_{k},H^{(2)}_{s_{k}}(m_{k}))$, proving the homeomorphism 
approaching. The inverse homeomorphism is similarly 
$FC_{2}-FC_{1}$-approaching, so we have an isomorphism of 
$(AC_{1},FC_{1})$ and $(AC_{2},FC_{2})$ as objects of $\mathrm{MAppr}$.

(4)As $FC(\phi,H)$ is a homotopy negligible subset by (2), it suffices 
to prove that $AC(\phi,H)\setminus FC(\phi,H)$ is an AR, as per 
Proposition~\ref{AR_props}(7). But by (1), 
$AC(\phi,H)\setminus FC(\phi,H)$ is an intersection of two metrizable 
mapping cylinders between ARs along an AR (the graph of $\phi$ in 
$(M\setminus X) \times \{0\} \times (N\setminus Y) \subset M \star N$, 
which is homeomorphic to $M\setminus X$) that is a closed subset of 
either. Also given any choices of $H^{(1)}$ and $H^{(2)}$, the 
approaching homeomorphisms of (3) now provide a fine shape 
isomorphism between $FC(\phi,H^{(1)})$ and $FC(\phi,H^{(2)})$.
\end{proof}

It follows from the proposition that all versions of 
$FC(\phi)$ for various $H$ represent the same isomorphism class of 
objects of $\mathrm{fSh(M)}$, and all versions of the pair 
$(AC(\phi),FC(\phi))$ represent the same isomorphism class of objects of 
$\mathrm{MAppr}$. Moreover, the fine shape class of any specific 
approaching map from $AC(\phi)\setminus FC(\phi)$, or into it, is also
independent of the choice of $H$. Thus we are justified in adopting the 
following

\begin{convention}
From now on, we shall refer to the two cylinders as $AC(\phi)$ and 
$FC(\phi)$, understanding that $H$ is chosen arbitrarily (as long as it 
is additive). Whenever needed, an arbitrary point of 
$C_{+}(\phi,H) \cup C_{+,0}(\phi,H)$ shall be denoted by 
$(m,s,\phi\circ H_{s}(m))$ for some $m \in M$ and $s \in (0,1)$, and a 
point of $C_{-}(\phi) \cup C_{-,0}(\phi)$ by $(m,s,n)$, where $m \in M$, 
$s \in (-1,0]$, and $n \in N$.
\end{convention}

The first indication that these cylinders allow us to invert the $S$ 
functor of section~\ref{sec_functor_S} is given by

\begin{proposition}\label{FC_fdr}
Given an approaching map $\phi\colon M\setminus X \to N\setminus Y$, if 
$M$ and $N$ are ARs, and $X$ is additive homotopy negligible in $M$, 
denote the standard embeddings as $u\colon X \to FC(\phi)$ and 
$i\colon Y \to FC(\phi)$. Then $i$ is an FDR-embedding, and 
$[i]_{fSh}^{-1}\circ[u]_{fSh} = [\phi]$.
\end{proposition}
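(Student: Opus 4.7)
The plan is to use $AC(\phi)$ as the ambient AR and the standard embedding of $N$ (the ``$s = -1$ slice'' of the join) as the target of the retraction. Proposition~\ref{AC_FC_props}(2) and~(4) make $AC(\phi)$ an AR containing $FC(\phi)$ as a closed homotopy negligible subset, and since $FC(\phi)$ meets $\{s = -1\}$ only in $Y$, we have $N \cap FC(\phi) = Y$; the hypothesis that $Y$ is homotopy negligible in $N$ is built into our data. What remains for the FDR-embedding conditions is to produce an approaching strong deformation retraction $\Phi$ of $AC(\phi)\setminus FC(\phi)$ onto $N\setminus Y$.

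I would construct $\Phi$ by a two-phase slide along the $s$-coordinate, following the decomposition $AC(\phi)\setminus FC(\phi) = MC(p) \cup MC(q)$ from Proposition~\ref{AC_FC_props}(1). For $t \in [0, \tfrac{1}{2}]$, keep $C_-(\phi) \cup (N\setminus Y)$ fixed and collapse $MC(p)$ onto the graph of $\phi$ by
$$(m, s, \phi\circ H_{s}(m)) \mapsto (m, (1 - 2t)s, \phi\circ H_{(1 - 2t)s}(m)),$$
with $m \in M\setminus X$ treated as the $s = 1$ point; for $t \in [\tfrac{1}{2}, 1]$, slide all of $MC(q)$ down to $s = -1$ via $(m, s, n) \mapsto (m, (2 - 2t)s - (2t - 1), n)$. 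At $t = \tfrac{1}{2}$ both formulas match on the graph, so $\Phi$ is continuous; the conditions $\Phi_0 = id$, $\Phi_t|_{N\setminus Y} = id$, and $\Phi_1(AC(\phi)\setminus FC(\phi)) \subseteq N\setminus Y$ follow at once.

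The main technical obstacle is verifying that $\Phi$ is $(FC(\phi)\times[0,1]) - FC(\phi)$-approaching. The delicate case is a sequence $(a_k, t_k)$ converging to $(c, \tfrac{1}{2})$ with $c \in X$: writing $a_k = (m_k, s_k, \phi\circ H_{s_k}(m_k))$ with $m_k \to c$ and $s_k \to 1$, the image has second coordinate $(1 - 2t_k)s_k \to 0$ and its first-coordinate iterate $H_{(1 - 2t_k)s_k}(m_k) \to H_0(c) = c \in X$ by additivity of $H$; the approaching property of $\phi$ then gives a subsequence of $\phi\circ H_{(1 - 2t_k)s_k}(m_k)$ converging to some $y \in Y$, so the image accumulates at $(c, 0, y) \in C_{-,0} \subset FC(\phi)$. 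All other subcases (limits in $C_{+,0}$, $C_{-,0}$, or $Y$, and values of $t$ away from $\tfrac{1}{2}$) reduce to continuity arguments essentially identical to those proving Proposition~\ref{AC_FC_props}(2).

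To close, I would verify $[i]_{fSh}^{-1}\circ[u]_{fSh} = [\phi]$ by direct computation. Take $\bar{u}\colon M \hookrightarrow AC(\phi)$ and $\bar{\i}\colon N \hookrightarrow AC(\phi)$ to be the standard join inclusions; they extend $u$ and $i$ and satisfy $\bar{u}^{-1}(FC(\phi)) = X$ and $\bar{\i}^{-1}(FC(\phi)) = Y$. Hence $[u]_{fSh}$ is represented by $\bar{u}|_{M\setminus X}$, while the retraction $\Phi$ just built shows that $[i]_{fSh}^{-1}$ is represented by $\Phi_1\colon AC(\phi)\setminus FC(\phi) \to N\setminus Y$. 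For $m \in M\setminus X$, seen as $(m,1)$ in the join, the two-phase formula collapses to $\Phi_1(m) = (m, -1, \phi(m)) = \phi(m)$, so $\Phi_1\circ\bar{u}|_{M\setminus X} = \phi$ and Corollary~\ref{fSh_composable} yields $[i]_{fSh}^{-1}\circ[u]_{fSh} = [\phi]$.
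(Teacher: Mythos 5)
Your proposal is correct and follows essentially the same route as the paper: both set $AC(\phi)$ as the ambient AR with $N$ as the retract, and both construct the approaching strong deformation retraction in the same two phases (first collapsing the $MC(p)$ piece onto the graph of $\phi$ at $s=0$, then collapsing $C_-(\phi)\cup(N\setminus Y) = MC(q)$ onto $N\setminus Y$), with the approaching property reduced in both cases to the observation that $H_{(1-2t_k)s_k}(m_k)\to x\in X$ forces $\phi\circ H_{(1-2t_k)s_k}(m_k)$ to accumulate in $Y$. You merely make explicit the interpolation formulas and the sequence check that the paper describes more abstractly via the cylinder structure of Proposition~\ref{AC_FC_props}(1).
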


\begin{corollary}\label{fSh_represent}
For any fine shape class $[\phi] \in [X,Y]_{fSh}$, there exist a space 
$Z$ along with maps (which can be chosen to be closed embeddings) 
$u\colon X \to Z$ and $i\colon Y \to Z$ such that $i$ is a fine shape 
equivalence and $[i]_{fSh}^{-1}\circ[u]_{fSh} = [\phi]$.
\end{corollary}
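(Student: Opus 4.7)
The plan is to derive this corollary from Proposition~\ref{FC_fdr} by arranging its hypotheses for the given fine shape class. Since fine shape classes from $X$ to $Y$ are independent of the ARs used to represent them (subsection~\ref{def_fine_shape}), I would first choose ARs $M$ and $N$ containing $X$ and $Y$ respectively as closed homotopy negligible subsets, along with an approaching map $\phi\colon M\setminus X \to N\setminus Y$ whose fine shape class is the given $[\phi]$.

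The one point requiring care is that Proposition~\ref{FC_fdr} needs $X$ to be \emph{additive} homotopy negligible in $M$, which is strictly stronger than the homotopy negligibility guaranteed by the definition of fine shape. To arrange this, I would take any AR $M_{0}$ containing $X$ as a closed subset and replace $M$ by $M_{0} \times [0,1]$, identifying $X$ with $X \times \{0\}$: Proposition~\ref{AR_props}(7) shows this is an AR in which $X$ sits as a closed additive homotopy negligible subset, with explicit additive homotopy $H_{t}(m,s) := (m, \min\{s+t,1\})$. Representing $[\phi]$ by an approaching map defined on this new $M \setminus X$ is possible by the same independence-of-representation lemma recalled in subsection~\ref{def_fine_shape}.

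Then I would simply set $Z := FC(\phi)$ and take $u\colon X \to Z$, $i\colon Y \to Z$ to be the standard embeddings. Proposition~\ref{FC_fdr} supplies directly both the identity $[i]_{fSh}^{-1}\circ[u]_{fSh} = [\phi]$ and the fact that $i$ is an FDR-embedding, hence in particular a fine shape equivalence. Closedness of $u$ and $i$ follows from the construction in Definition~\ref{define_AC_FC}: $X$ and $Y$ sit as closed subsets of the metrizable join $M \star N$ under the standard embeddings, and $FC(\phi)$ inherits the subspace topology from $M \star N$, so the inclusions of $X$ and $Y$ into $FC(\phi)$ are still closed embeddings.

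There is no genuine obstacle here; the substantive work has already been done in Section~\ref{sec_AC_FC}, and the corollary is essentially a restatement of Proposition~\ref{FC_fdr} after the preparatory step of enlarging the ambient AR of $X$ to secure additive homotopy negligibility. The only mildly delicate point is verifying that, after replacing $M$ by $M_{0} \times [0,1]$, the resulting approaching map still represents the original fine shape class --- but this is immediate from Definition~\ref{def_fSh_class} together with the extension lemma (Lemma~\ref{extend_map_to_fSh}) applied to $id_{X}$.
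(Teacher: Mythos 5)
Your proof is correct and takes the approach the paper intends; the paper in fact gives no explicit proof of this corollary, treating it as an immediate consequence of Proposition~\ref{FC_fdr} once one observes (exactly as you do) that the additive homotopy negligibility hypothesis can always be arranged via Proposition~\ref{AR_props}(7) and that the standard embeddings into $FC(\phi) \subset M \star N$ are closed because $M$ and $N$ are closed in the metrizable join.
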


\begin{proof}[Proof (of Proposition~\ref{FC_fdr})]
$u$ and $i$ readily extend to embeddings $\bar{u}\colon M \to AC(\phi)$ 
and $\bar{i}\colon N \subset AC(\phi)$, and $FC(\phi)$ is closed and 
homotopy negligible in $AC(\phi)$ by the point 2) of the previous 
proposition. The approaching strong deformation retraction of 
$AC(\phi)\setminus FC(\phi)$ onto $N\setminus Y$ can be constructed in 
two steps. First, strong deformation retract 
$AC(\phi)\setminus FC(\phi)$ onto $C_{-}(\phi) \cup (N\setminus Y)$ by 
retracting each interval $\{(m,s,\phi\circ H_{s}(m)) \mid s \in [0,1]\}$ 
along itself; given the point 1) of the previous preposition, this is 
the same as retracting the metrizable cylinder of the map 
$$M\setminus X \ni m \mapsto (m,\phi(m)) \in (M\setminus X) \times (N\setminus Y).$$ 
Second, strong deformation retract $C_{-}(\phi) \cup (N\setminus Y)$ 
onto $N\setminus Y$, treating it as the metrizable cylinder of the map 
$$(M\setminus X) \times (N\setminus Y) \ni (m,n) \mapsto n \in (N\setminus Y).$$ 
Both these retractions are clearly correctly approaching, noting yet 
again that if 
$(m_{k},s_{k},\phi\circ H_{s_{k}}(m_{k})) \to (x,0,y) \in C_{-,0}(\phi)$, 
then $\phi\circ H_{t_{k}}(m_{k})$ has an accumulation point in $Y$ for 
any sequence $t_{k} \to 0$.

Now for any point $m \in M$, we have $\bar{u}(m) = (m,1)$, whose image 
at the end of this approaching strong deformation retraction is 
$\phi(m)$. Thus the class $[i]_{fSh}^{-1}\circ[u]_{fSh}$ is represented 
by $\phi$.
\end{proof}

\begin{remark}\label{FC_equiv_to_MC}
Consider a fine shape class induced by a map $f\colon X \to Y$. For any 
ARs $M$ and $N$ containing $X$ and $Y$ respectively as closed homotopy 
negligible subsets (additive or not), and any extension 
$\bar{f}\colon M \to N$ with $\bar{f}^{-1}(Y) = X$, the metrizable 
mapping cylinder $MC(f)$ is closed and homotopy negligible in 
$MC(\bar{f})$ (by~\cite[Proposition~19.8(a)]{Melikhov2022T}), which is 
an AR (by Proposition~\ref{AR_props}(3)); thus $\bar{f}|_{M\setminus X}$ 
represents $[f]_{fSh}$. Since $MC(\bar{f})$ strong deformation retracts 
onto $N$, restricting to a strong deformation retraction of $MC(f)$ onto 
$Y$, we conclude that $MC(f)$ is (in particular) fine shape isomorphic 
to $Y$, and therefore also to $FC(\bar{f}|_{M\setminus X})$. Thus our 
cylinder is consistent, at least in fine shape, with the usual 
metrizable cylinder.
\end{remark}

Now we state the first result related to the fine shape mapping cylinder 
and extension set of an approaching map 
(see Definition~\ref{def_phi_ex_set}).

\begin{proposition}\label{embed_MC}
Let $X$ be closed additively homotopy negligible in $M$, and let 
$\phi\colon M\setminus X \to N\setminus Y$ be an approaching map 
that extends on $A \subseteq X$ by a map $f\colon A \to Y$. Then there 
is an embedding of the metrizable mapping cylinder $MC_{X}(f)$ into 
$FC(\phi)$; therefore, if $u\colon X \subset FC(\phi)$ and 
$i\colon Y \subset FC(\phi)$ are the standard embeddings, then 
$if \simeq u|_{A}$.
\end{proposition}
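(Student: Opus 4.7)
The plan is to construct an explicit embedding $\iota\colon MC_{X}(f) \to FC(\phi)$ that sends the standard copies of $X$ and $Y$ in $MC_{X}(f)$ to those in $FC(\phi)$; the homotopy $if \simeq u|_{A}$ will then fall out by composing the standard ``sliding'' along the cylindrical coordinate with $\iota$.

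I would define $\iota$ to be the identity on $X$ and on $Y$, and on a point $(a, s) \in A \times (0, 1]$ split the definition at $s = 1/2$ (where the natural join parameter $2s - 1$ changes sign). For $s \in (0, 1/2]$, set $\iota(a, s) := (a, 2s - 1, f(a))$, which lands in $C_{-,0}(\phi)$; this is essentially the naive inclusion $MC_{X}(f) \subset X \star Y \subset M \star N$. For $s \in (1/2, 1)$, however, the naive formula fails to land in $C_{+,0}(\phi, H)$, since the latter constrains its third coordinate to be $\phi \circ H_{s'}$ applied to the first; so I would instead set $\iota(a, s) := (a, 2s - 1, \phi \circ H_{2s - 1}(a))$. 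At the top $s = 1$ the upper formula collapses to $a$ by the join identification $(m, 1, n) = m$, consistent with the identity on $X$.

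The delicate step is verifying continuity at the seam $s = 1/2$. As $s \to (1/2)^{+}$ in the upper formula, $H_{2s - 1}(a) \to a \in A$, and the extension property of $\phi$ on $A$ forces $\phi \circ H_{2s - 1}(a) \to f(a)$; hence $\iota(a, s) \to (a, 0, f(a)) = \iota(a, 1/2)$, matching the value coming from the lower formula. This is really the only step where something nontrivial happens — all other continuity checks at the $X$-end, the $Y$-end, and in the interiors of the two halves follow directly from the sequence-convergence description of the metrizable join (Remark~\ref{join_remarks}(4)). Injectivity is immediate from the explicit parametrization, and bicontinuity is dual: a convergent image sequence in $FC(\phi) \subset M \star N$ has convergent first and join (second) coordinates, and this is enough to pull back convergence into $MC_{X}(f)$.

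Finally, I would define the homotopy $F\colon A \times [0, 1] \to FC(\phi)$ by $F(a, t) := \iota(a, t)$ for $t \in (0, 1]$ and $F(a, 0) := f(a) \in Y$. This is the composition of the cylindrical coordinate map $A \times [0, 1] \to MC_{X}(f)$ (which identifies $(a, 0)$ with $f(a) \in Y$) with $\iota$, so continuity is automatic; direct evaluation gives $F_{0} = if$ and $F_{1} = u|_{A}$, yielding the required homotopy.
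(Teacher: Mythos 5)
Your proof is essentially the same as the paper's: identical embedding formula (splitting $A \times (0,1]$ at $s = 1/2$, sending the upper half to $(a, 2s-1, \phi\circ H_{2s-1}(a)) \in C_{+,0}$ and the lower half to $(a, 2s-1, f(a)) \in C_{-,0}$), and the same use of the extension property of $\phi$ on $A$ to establish continuity at the seam; the concluding homotopy via the cylinder coordinate also matches.
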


\begin{proof}
Given any choice of homotopy $H$, we embed 
$MC(f) = X \times \{1\} \cup A \times (0,1) \cup N$ by sending a 
point $y \in Y$ or $x \in X$ to itself (through the standard 
embeddings), while sending a point $(a,s) \in A \times (0,1)$ to 
$(a,2s-1,\phi\circ H_{2s-1}(a))$ for $s \in (\frac{1}{2},1]$ or to 
$(a,2s-1,f(a))$ for $s \in (0,\frac{1}{2}]$. Continuity of this mapping 
is assured by the fact that $A \subseteq X(\phi)$. 
Specifically, denote the combination of $\phi$ and $f$ by $\bar{\phi}$. 
Whenever $m_{k} \to m$ in $(M\setminus X) \cup A$, we have 
$\bar{\phi}(m_{k}) \to \bar{\phi}(m)$, and if also $s_{k} \to s \geq 0$, 
then $H_{s_{k}}(m_{k}) \to H_{s}(m)$, so 
$\bar{\phi}\circ H_{s_{k}}(m_{k}) \to \bar{\phi}\circ H_{s}(m)$.
\end{proof}

The last proposition of this section describes the primary property of 
the pair $(AC(\phi),FC(\phi))$ that makes it similar to the usual 
mapping cylinder. In addition, it also includes a result about extension 
sets. This, combined with the previous proposition, will be used to 
prove fraction equivalences in the next section.

\begin{proposition}\label{AC_extend}
Let $X$ be closed additively homotopy negligible in $M$. Assume an 
approaching map $\phi\colon M\setminus X \to N\setminus Y$. Take 
$(AC(\phi),FC(\phi))$, and denote the embeddings of $M$ and $N$ by 
$\bar{u}$ and $\bar{\i}$ respectively. Given approaching maps 
$\psi\colon M\setminus X \to L\setminus Z$ and 
$\chi\colon N\setminus Y \to L\setminus Z$, along with an approaching 
homotopy 
$\Psi\colon (M\times [0,1])\setminus (X\times [0,1]) \to L\setminus Z$ 
with $\Psi_{1} = \psi$ and $\Psi_{0} = \chi\circ\phi$, there is an 
approaching map
$\omega \colon AC(\phi)\setminus FC(\phi) \to L\setminus Z$ such 
that $\omega\circ\bar{u}|_{M\setminus X} = \psi$ and 
$\omega\circ\bar{\i}|_{N\setminus Y} = \chi$. 
Moreover, $\omega$ can be chosen so that it extends on $X(\psi)$ and 
$Y(\chi)$ by the same maps that extend $\psi$ and $\chi$.
\end{proposition}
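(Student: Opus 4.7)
The plan is to define $\omega$ piece-by-piece on the four-part decomposition $AC(\phi)\setminus FC(\phi) = (M\setminus X) \cup C_{+}(\phi,H) \cup C_{-}(\phi) \cup (N\setminus Y)$ from Definition~\ref{define_AC_FC}, exploiting the mapping-cylinder interpretations of Proposition~\ref{AC_FC_props}(1). On the top $M\setminus X$, set $\omega := \psi$; on the base $N\setminus Y$, set $\omega := \chi$. On $C_{-}(\phi)$, which together with $N\setminus Y$ is the metrizable mapping cylinder of the projection $q\colon (M\setminus X)\times(N\setminus Y) \to N\setminus Y$, collapse to the base via $\omega(m,s,n) := \chi(n)$. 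The interesting piece is $C_{+}(\phi,H)$: under the identification $(m,s,\phi\circ H_{s}(m)) \leftrightarrow (m,s) \in (M\setminus X) \times (0,1]$ from Proposition~\ref{AC_FC_props}(1), the homotopy $\Psi$ is the natural candidate, but to make the extension clause at $X(\psi)$ work I would reparametrize, setting $\omega(m,s,\phi\circ H_{s}(m)) := \Psi_{\alpha(s)}(m)$ for some continuous $\alpha\colon [0,1]\to[0,1]$ with $\alpha(0)=0$ and $\alpha(s)=1$ for $s \geq \tfrac{1}{2}$, e.g.\ $\alpha(s) := \min\{2s,1\}$.

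Continuity is automatic on each piece; the only gluing checks are at $s=1$, where the $C_{+}$ formula returns $\Psi_{1}(m)=\psi(m)$, matching the $M\setminus X$ formula, and at $s=0$, where a sequence $(m_{k},s_{k},\phi\circ H_{s_{k}}(m_{k})) \to (m,0,\phi(m))$ in $C_{+}$ with $m \in M\setminus X$ has $\Psi_{\alpha(s_{k})}(m_{k}) \to \Psi_{0}(m) = \chi(\phi(m))$ by continuity of $\Psi$, agreeing with the $C_{-}$ formula. The required identities $\omega\circ\bar{u}|_{M\setminus X}=\psi$ and $\omega\circ\bar{\i}|_{N\setminus Y}=\chi$ are then immediate from the definition.

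The approaching property is verified by cases indexed by where in $FC(\phi) = X \cup C_{+,0} \cup C_{-,0} \cup Y$ a sequence accumulates. Sequences accumulating in $X \cup C_{+,0}$ lie in $(M\setminus X)\cup C_{+}$, and under the product identification take the form $(m_{k},s_{k}) \to (x,s)$ with $x \in X$; on these $\omega$ equals either $\psi(m_{k})$ or $\Psi_{\alpha(s_{k})}(m_{k})$, and approachingness of $\psi$ or of $\Psi$ on $(M\times[0,1])\setminus(X\times[0,1])$ supplies the needed accumulation in $Z$. Sequences accumulating in $Y \cup C_{-,0}$ from the $C_{-} \cup (N\setminus Y)$ side have $N$-coordinate tending into $Y$, so $\omega=\chi(n_{k})$ accumulates in $Z$ by approachingness of $\chi$; $C_{+}$-sequences converging to $C_{-,0}$ are handled via approachingness of $\Psi$ near $s=0$. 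For the extension clause, at any $y \in Y(\chi)$ every sequence in $AC(\phi)\setminus FC(\phi)$ converging to $y$ has its $N$-coordinate tending to $y$ (sequences from $M\setminus X$ or $C_{+}$ cannot converge to $y$ in $AC(\phi)$), so $\omega=\chi(n_{k})$ converges to the extending value automatically; at any $x \in X(\psi)$, sequences from $M\setminus X$ are handled by the extension of $\psi$, while sequences from $C_{+}$ have $s_{k}\to 1$, hence eventually $\alpha(s_{k})=1$ and $\omega=\psi(m_{k})$ again converges correctly.

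The main subtlety, and the raison d'\^etre of the reparametrization $\alpha$, is that the homotopy $\Psi$ is only an approaching map on $(M\times[0,1])\setminus(X\times[0,1])$, so even when $\psi=\Psi_{1}$ extends continuously at $x \in X$, $\Psi$ itself need not extend at $(x,1)$; a naive choice $\omega = \Psi_{s}$ on $C_{+}$ could therefore fail the extension clause at $X(\psi)$, whereas stabilizing $\omega$ to $\psi$ on a neighborhood of $s=1$ neutralizes this issue. Everything else in the proof amounts to routine gluing and the standard approaching-map verifications sketched above.
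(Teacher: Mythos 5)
Your construction of $\omega$ is identical to the paper's — $\Psi_{\min\{2s,1\}}(m)$ on $(M\setminus X)\cup C_{+}(\phi,H)$ and $\chi(n)$ on $C_{-}(\phi)\cup(N\setminus Y)$ — and your detailed verifications of continuity, the approaching property, and the extension clause (including the observation that the reparametrization stabilizing to $\psi$ near $s=1$ is what makes the $X(\psi)$ extension work) are all sound. The paper writes down the same formula and declares the remaining checks trivial; you have simply supplied them.
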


\begin{proof}
Given any choice of additive homotopy $H$, define $\omega$ by
$$
\omega|_{(M\setminus X) \cup C_{+}(\phi,H)}(m,s,\phi\circ H_{s}(m)) := 
\begin{cases}
\psi(m), & s \in [\frac{1}{2},1] \\
\Psi_{2s}(m), & s \in (0,\frac{1}{2}]
\end{cases}
$$
and 
$\omega|_{C_{-}(\phi) \cup (N\setminus Y)}(m,s,n) := \chi(n)$. All 
required properties are trivial to prove.
\end{proof}

\begin{remark}\label{AC_universal}
In the topological category, the usual mapping cylinder of a map 
$f\colon X \to Y$ is the pushout of the diagram 
$X \times [0,1] \xleftarrow{x = (x,0)} X \xrightarrow{f} Y$; the same is 
true for the metrizable mapping cylinder in $\mathrm{M}$ (with respect 
to all metrizable spaces, but not to all topological ones), which had to 
be defined, to begin with, because the topological pushout of metrizable 
spaces may not be metrizable. The situation with $\mathrm{MAppr}$ seems 
to be even worse and less studied too; our proposition, therefore, does 
not even show the existence part of the same universal property (and our 
cylinder pair, while definable even with $H$ not additive, may not be 
an object of $\mathrm{MAppr}$ in that case). It is not immediately clear 
whether $\mathrm{MAppr}$ has pushouts even of this restricted type (see 
the appendix; the counterexample is applicable here too). What we do 
prove suffices for our purposes, but there may be some interest in 
studying the matter further.
\end{remark}

\section{The T functor}
\label{sec_functor_T}

Now we are in position to define the inverse to $S$. First we provide a 
lemma that uses Propositions~\ref{AC_extend} and~\ref{embed_MC} to prove 
fraction equivalence, and will be used in proving the next two theorems.

\begin{lemma}\label{fraction_equiv}
Let $\phi\colon M\setminus X \to N\setminus Y$ be an approaching map, 
$u\colon X \to FC(\phi)$ and $i\colon Y \to FC(\phi)$ be the 
standard embeddings, and 
$(M,X) \xrightarrow{\bar{v}} (L,Z) \xleftarrow{\bar{\j}} (N,Y)$ be a 
cospan of maps of pairs with $\bar{v}^{-1}(Z) = X$ and 
$\bar{\j}^{-1}(Z) = Y$; here $M$, $N$, and $L$ are ARs, and $X$, $Y$, 
and $Z$ are their closed homotopy negligible subsets, with $X$ and $Y$ 
additively so. Denote $v := \bar{v}|_{X}\colon X \to Z$ and 
$j := \bar{\j}|_{Y}\colon Y \to Z$. If the homotopy class $[j]$ is in 
$[FDR]$, and if there is an approaching homotopy 
$\Omega\colon (M \times [0,1])\setminus(X \times [0,1]) \to L\setminus Z$ 
between $\Omega_{0} = \bar{v}|_{M\setminus X}$ and 
$\Omega_{1} = \bar{\j}|_{N\setminus Y}\circ\phi$, then the left 
fractions $X \xrightarrow{[v]} Z \xleftarrow{[j]} Y$ and 
$X \xrightarrow{[u]} FC(\phi) \xleftarrow{[i]} Y$ are equivalent.
\end{lemma}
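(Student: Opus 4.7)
The plan is to exhibit a common refinement cospan $FC(\phi)\xrightarrow{[u']} FC(\omega)\xleftarrow{[i']} Z$, where $\omega\colon AC(\phi)\setminus FC(\phi)\to L\setminus Z$ is an approaching map supplied by Proposition~\ref{AC_extend}. First I would apply that proposition with $\psi := \bar{v}|_{M\setminus X}$, $\chi := \bar{\j}|_{N\setminus Y}$, and $\Psi_t := \Omega_{1-t}$ (a harmless time-reversal so that $\Psi_1 = \psi$ and $\Psi_0 = \chi\circ\phi$). The resulting $\omega$ satisfies $\omega\circ\bar{u}|_{M\setminus X} = \bar{v}|_{M\setminus X}$ and $\omega\circ\bar{\i}|_{N\setminus Y} = \bar{\j}|_{N\setminus Y}$, and by the ``moreover'' clause of Proposition~\ref{AC_extend} it extends on $X\subseteq X(\psi)$ by $v$ and on $Y\subseteq Y(\chi)$ by $j$.

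Next I would construct $FC(\omega)$. The hypotheses of Proposition~\ref{FC_fdr} are met for $\omega$: $AC(\phi)$ is an AR by Proposition~\ref{AC_FC_props}(4) (since $M,N$ are ARs), $FC(\phi)$ is additively homotopy negligible in $AC(\phi)$ by Proposition~\ref{AC_FC_props}(2) (using that both $X\subseteq M$ and $Y\subseteq N$ are additive), and $L$ is an AR by assumption. So Proposition~\ref{FC_fdr} produces standard embeddings $u'\colon FC(\phi)\to FC(\omega)$ and $i'\colon Z\to FC(\omega)$ with $i'$ an FDR-embedding. Applying Proposition~\ref{embed_MC} to $\omega$ on the subset $X\cup Y\subseteq FC(\phi)$ (a disjoint union of closed subsets in the metrizable space $FC(\phi)$, so the combined extension $v\cup j\colon X\cup Y\to Z$ is continuous) then yields the homotopies $i'\circ v\simeq u'\circ u$ and $i'\circ j\simeq u'\circ i$ inside $FC(\omega)$.

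These are precisely the two diagonal identities required by Definition~\ref{def_left_cat}(3), and their common composition $[u'\circ i] = [i'\circ j]$ lies in $[FDR]$ as a composition of two FDR-embeddings (Proposition~\ref{fdr_closed}), since $j$ is an FDR-embedding by hypothesis and $i'$ is one by the previous step. Thus the cospan $FC(\phi)\xrightarrow{[u']} FC(\omega)\xleftarrow{[i']} Z$ witnesses the equivalence of the fractions $[j]\backslash[v]$ and $[i]\backslash[u]$.

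The hardest conceptual part of this argument is already behind us by the time the proof begins: Proposition~\ref{AC_extend} does the real work of converting an approaching homotopy into an honest approaching map out of $AC(\phi)$, while Proposition~\ref{embed_MC} does the work of realizing the extension data as explicit homotopies inside the fine shape cylinder. What remains to check is mostly bookkeeping — verifying that the hypotheses (ARness, additive homotopy negligibility of both $X$ and $Y$, and the inclusions $X\subseteq X(\psi)$, $Y\subseteq Y(\chi)$) propagate correctly from the original data through the cylinder construction, and that the two partial extensions $v$ and $j$ glue continuously on $X\cup Y$, which follows from normality of the metrizable space $FC(\phi)$.
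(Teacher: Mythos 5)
Your argument matches the paper's proof: both invoke Proposition~\ref{AC_extend} (via the time reversal $\Psi_t = \Omega_{1-t}$, which the paper leaves implicit) to produce $\omega\colon AC(\phi)\setminus FC(\phi)\to L\setminus Z$ extending by $v$ and $j$, then take $R = FC(\omega)$ with its standard embeddings and read off the required homotopies from Proposition~\ref{embed_MC}. One tiny imprecision: the hypothesis gives $[j]\in[FDR]$, not that $j$ itself is an FDR-embedding, but this does not affect the conclusion since $[FDR]$ is closed under composition in $\mathrm{hM}$, so $[i'\circ j]=[i'][j]\in[FDR]$ regardless.
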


\begin{proof}
To prove the fractions equivalent, we need to exhibit a 
cospan $FC(\phi) \xrightarrow{[f]} P \xleftarrow{[g]} Z$ such that 
$[fu] = [gv]$, $[fi] = [gj]$, and the latter class is in $[FDR]$. So 
use Proposition~\ref{AC_extend} to obtain an approaching map 
$\omega\colon AC(\phi)\setminus FC(\phi) \to L\setminus Z$ that 
extends on $X$ and $Y$ by $v$ and $j$ respectively. Now take 
$P$ to be $FC(\omega)$ (note that by Proposition~\ref{AC_FC_props}(2), 
$FC(\phi)$ is additive homotopy negligible in $AC(\phi)$ since $Y$ is so 
in $N$; thus $(AC(\omega),FC(\omega))$ is defined), with $f$ and $g$ the 
standard embeddings:

$$\xymatrix{
 & FC(\phi) \ar[drr]^{[f]} & & \\
X \ar[ur]^{[u]} \ar[dr]_{[v]} & & Y \ar[ul]|{\circ}^{[i]} \ar[dl]|{\circ}_{[j]} & P = FC(\omega) \\
 & Z \ar[urr]|{\circ}_{[g]} & & \\
}$$

\noindent Then by Proposition~\ref{embed_MC}, $P$ contains 
embeddings of $MC(v)$ and $MC(j)$ that prove $[fu] = [gv]$ and 
$[fi] = [gj]$, and $[gj]$ is in $[FDR]$ as $[j]$ and $[g]$ are.
\end{proof}

\begin{remark}\label{fraction_equiv_remark}
The only reason we require $[j] \in [FDR]$ is that otherwise the 
span $X \xrightarrow{[v]} Z \xleftarrow{[j]} Y$ does not define a 
fraction class in $[FDR]\backslash hM$. On the other hand, the 
approaching homotopy $\Omega$ may in some cases be provided by composing 
$\bar{v}|_{M\setminus X}$ with the approaching strong deformation 
retraction of $L\setminus Z$ onto $N\setminus Y$. The proof of 
Theorem~\ref{ST_inverse} below uses exactly this.

Also note that $Z$ does not have to be {\it additive} homotopy 
negligible in $L$.
\end{remark}

With this lemma we can prove

\begin{theorem}\label{define_T}
There is a functor 
$T \colon \mathrm{fSh(M)} \to [FDR]\backslash\mathrm{hM}$, uniquely 
defined by the following properties:

(1)$T$ is constant on objects;

(2)Whenever a fine shape class $[\phi] \in [X,Y]_{fSh}$ can be 
represented by an $X-Y$-approaching map 
$\phi \colon M\setminus X \to N\setminus X$, where $M$ and $N$ are ARs 
containing $X$ and $Y$ as closed additive homotopy negligible subsets, 
the left fraction class $T([\phi])$ can be represented by the fraction 
$X \xrightarrow{[u]} FC(\phi) \xleftarrow{[i]} Y$, where 
$[u]$ and $[i]$ are the homotopy classes of the standard embeddings.
\end{theorem}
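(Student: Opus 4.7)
The plan is to first ensure that representatives of the required form exist for every fine shape class, then to verify that the action of $T$ on morphisms is unambiguous, and finally to check functoriality. Representatives exist by Proposition~\ref{AR_props}(7): for any space $X$, the subspace $X = X \times \{0\}$ sits as a closed additive homotopy negligible subset of $M \times [0,1]$ (for any AR $M$ containing $X$), with the explicit additive homotopy $H_{t}(m,s) := (m,\min\{s+t,1\})$. For any such representative $\phi\colon M\setminus X \to N\setminus Y$, Proposition~\ref{FC_fdr} yields $[i_\phi] \in [FDR]$, so $[u_\phi]\backslash[i_\phi]$ is a legitimate morphism of $[FDR]\backslash\mathrm{hM}$; independence from the auxiliary additive homotopy used to form $FC(\phi)$ follows from Proposition~\ref{AC_FC_props}(3)--(4), whose approaching homeomorphisms fix the standard embeddings.

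The central well-definedness claim is independence of the representative. If $\phi\colon M\setminus X \to N\setminus Y$ and $\phi'\colon M'\setminus X \to N'\setminus Y$ both represent $[\phi]$, Definition~\ref{def_fSh_class} furnishes extensions $\bar{id}_{X}\colon M \to M'$, $\bar{id}_{Y}\colon N \to N'$ (with $\bar{id}_{X}^{-1}(X) = X$, $\bar{id}_{Y}^{-1}(Y) = Y$) together with an approaching homotopy $\Xi$ between $\bar{id}_{Y}\circ\phi$ and $\phi'\circ\bar{id}_{X}$. I would apply Lemma~\ref{fraction_equiv} to $\phi$ with the cospan $(M,X) \xrightarrow{\bar{v}} (AC(\phi'),FC(\phi')) \xleftarrow{\bar{\j}} (N,Y)$, where $\bar{v} := \bar{u}_{\phi'}\circ\bar{id}_{X}$ and $\bar{\j} := \bar{\i}_{\phi'}\circ\bar{id}_{Y}$; the required approaching homotopy from $\bar{v}|_{M\setminus X}$ to $\bar{\j}|_{N\setminus Y}\circ\phi$ is obtained by concatenating $\bar{\i}_{\phi'}\circ\Xi$ with the built-in approaching homotopy of $AC(\phi')$ between $\bar{u}_{\phi'}|_{M'\setminus X}$ and $\bar{\i}_{\phi'}\circ\phi'$, precomposed with $\bar{id}_{X}$. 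Since $\bar{\j}|_{Y} = i_{\phi'}$ lies in $[FDR]$, the lemma identifies the two fractions. The identity axiom is analogous: applying Lemma~\ref{fraction_equiv} to $\phi = id_{M\setminus X}$ with the identity cospan and constant $\Omega$ identifies $T([id_X]_{fSh})$ with the identity fraction $[id_X]\backslash[id_X]$.

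Composition is the main obstacle. Take representatives $\alpha\colon M\setminus X \to N\setminus Y$ and $\beta\colon N\setminus Y \to L\setminus Z$ sharing the middle pair, so $[\beta]\circ[\alpha]$ is represented by $\beta\alpha$. To form the fraction-category composition of $T([\alpha])$ and $T([\beta])$, I need $FC(\alpha) \xrightarrow{w} R \xleftarrow{k} FC(\beta)$ with $wi_\alpha \simeq ku_\beta$ and $[k] \in [FDR]$. Build $R$ by applying Proposition~\ref{AC_extend} to $\alpha$ with outer data $\chi := \bar{u}_\beta|_{N\setminus Y}\colon N\setminus Y \to AC(\beta)\setminus FC(\beta)$, $\psi := \bar{\i}_\beta\circ\beta\circ\alpha$, and $\Psi$ the intrinsic homotopy of $AC(\beta)$ (from $\bar{u}_\beta|_{N\setminus Y}$ to $\bar{\i}_\beta\circ\beta$) precomposed with $\alpha$; this produces an approaching map $\omega\colon AC(\alpha)\setminus FC(\alpha) \to AC(\beta)\setminus FC(\beta)$ that extends on $Y \subset FC(\alpha)$ by $u_\beta$. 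Set $R := FC(\omega) \subset AC(\omega)$ and let $w$, $k$ be the standard embeddings of $FC(\alpha)$ and $FC(\beta)$. Then $[k] \in [FDR]$ by Proposition~\ref{FC_fdr}, while Proposition~\ref{embed_MC} applied to this extension of $\omega$ delivers $wi_\alpha \simeq ku_\beta$.

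To identify the composite fraction $[wu_\alpha]\backslash[ki_\beta]$ with $T([\beta\alpha])$, apply Lemma~\ref{fraction_equiv} to $\beta\alpha$ with the cospan $(M,X) \xrightarrow{\bar{w}\circ\bar{u}_\alpha} (AC(\omega),FC(\omega)) \xleftarrow{\bar{k}\circ\bar{\i}_\beta} (L,Z)$, where $\bar{w}\colon AC(\alpha) \to AC(\omega)$ and $\bar{k}\colon AC(\beta) \to AC(\omega)$ are the extended standard embeddings from the construction of $AC(\omega)$. The required approaching homotopy from $\bar{w}\circ\bar{u}_\alpha|_{M\setminus X}$ to $\bar{k}\circ\bar{\i}_\beta\circ\beta\alpha$ is obtained by precomposing the intrinsic homotopy of $AC(\omega)$ (from $\bar{w}|_{AC(\alpha)\setminus FC(\alpha)}$ to $\bar{k}\circ\omega$) with $\bar{u}_\alpha|_{M\setminus X}$ and simplifying via the identity $\omega\circ\bar{u}_\alpha|_{M\setminus X} = \bar{\i}_\beta\circ\beta\circ\alpha$ from Proposition~\ref{AC_extend}; meanwhile $[ki_\beta] \in [FDR]$ by Proposition~\ref{fdr_closed}. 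Since both $T([\beta\alpha])$ and the composite $T([\beta])\circ T([\alpha])$ coincide with this fraction, $T$ respects composition.
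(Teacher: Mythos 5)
Your proposal is correct and follows essentially the same strategy as the paper's proof: both rest the well-definedness, identity, and composition checks on Lemma~\ref{fraction_equiv}, Proposition~\ref{AC_extend}, Proposition~\ref{embed_MC}, and Proposition~\ref{FC_fdr}, with the same cospans and the same intrinsic retractions of the approaching cylinders supplying the required approaching homotopies. The one genuinely different detail is the bridge map for composition: the paper takes $\chi$ to be the explicit collapse of $AC(\alpha)\setminus FC(\alpha)$ onto $N\setminus Y$ followed by the embedding of $N$ into $AC(\beta)$, whereas you manufacture $\omega$ directly from Proposition~\ref{AC_extend}; both produce a map extending on $Y$ by $u_\beta$, so Proposition~\ref{embed_MC} and Lemma~\ref{fraction_equiv} apply the same way, and your version is marginally more streamlined since $\omega\circ\bar{u}_\alpha|_{M\setminus X}$ already lands in $L\setminus Z$ rather than needing a further retraction of $AC(\beta)$. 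One small imprecision: your remark that independence from the auxiliary additive homotopy $H$ follows because the approaching homeomorphisms of Proposition~\ref{AC_FC_props}(3)--(4) ``fix the standard embeddings'' does not immediately yield fraction equivalence in $[FDR]\backslash\mathrm{hM}$ --- those homeomorphisms are only defined on the complements and give an isomorphism in $\mathrm{MAppr}$, not a continuous map $FC(\phi,H^{(1)})\to FC(\phi,H^{(2)})$; but this is harmless because the general well-definedness argument you give next (with $\bar{id}_X=id_M$, $\bar{id}_Y=id_N$, and constant $\Xi$) already covers the case of changing $H$ alone, so the sentence could simply be dropped.
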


\begin{proof}
The properties define the action of $T$ uniquely, and the left fraction 
specified in (2) is well-defined, as $i\colon Y \to FC(\phi)$ is an 
FDR-embedding by Proposition~\ref{FC_fdr}. Thus we only need to check 
that $T$ is a functor.

First, we show that for any two approaching maps representing the same 
fine shape class, the left fractions specified in (2) are equivalent. To 
that end, let a given fine shape class from $X$ to $Y$ be represented by 
both approaching maps $\phi\colon M\setminus X \to N\setminus Y$ and 
$\psi\colon M'\setminus X \to N\setminus Y'$, where $M$ and $M'$ are ARs 
each containing $X$ as a closed additive homotopy negligible subset, and 
$N$ and $N'$ are ARs containing $Y$ as such. This means (by 
Definition~\ref{def_fSh_class}) that for some approaching maps 
$\bar{id}_{X}\colon M\setminus X \to M'\setminus X$ and 
$\bar{id}_{Y}\colon N\setminus Y \to N'\setminus Y$ extending $id_{X}$ 
and $id_{Y}$ we have an approaching homotopy 
$\Psi \colon (M\setminus X)\times [0,1] \to N'\setminus Y$ 
between $\Psi_{1} = \psi\circ\bar{id}_{X}$ and 
$\Psi_{0} = \bar{id}_{Y}\circ\phi$. Let 
$X \xrightarrow{[u]} FC(\phi) \xleftarrow{[i]} Y$ and 
$X \xrightarrow{[v]} FC(\psi) \xleftarrow{[j]} Y$ be the fractions 
consisting of the homotopy classes of the embeddings (the embeddings 
themselves shall be denoted by $u,v,i,j$ accordingly). We want to prove 
them equivalent. 

The maps $u,v,i,j$ extend to embeddings
$M \xrightarrow{\bar{u}} AC(\phi) \xleftarrow{\bar{\i}} N$ and 
$M' \xrightarrow{\bar{v}} AC(\psi) \xleftarrow{\bar{\j}} N'$. 
We have approaching maps 
$\bar{v}\circ\bar{id}_{X} \colon M\setminus X \to AC(\psi)\setminus FC(\psi)$ 
and 
$\bar{\j}\circ\bar{id}_{Y} \colon N\setminus Y \to AC(\psi)\setminus FC(\psi)$. 
Assuming $AC(\psi) = AC(\psi,H)$ for some specific homotopy $H$, define 
$\Omega \colon (M\setminus X) \times [0,1] \to AC(\psi)\setminus FC(\psi)$ 
by 
$$
\Omega(m,s) := 
\begin{cases}
(\bar{v}\circ\bar{id}_{X}(m),4s-3,\psi\circ H_{4s-3}\circ\bar{v}\circ\bar{id}_{X}(m)), & s \in [\frac{3}{4},1] \\
(\bar{v}\circ\bar{id}_{X}(m),4s-3,\psi\circ\bar{v}\circ\bar{id}_{X}(m)), & s \in [\frac{1}{2},\frac{3}{4}] \\
\Psi_{2s}(m) \in (N'\setminus Y) \subset AC(\psi)\setminus FC(\psi), & s \in [0,\frac{1}{2}]
\end{cases}
$$
As defined, $\Omega$ is an approaching 
homotopy between $\Omega_{1} = \bar{v}\circ\bar{id}_{X}$ and 
$\Omega_{0} = \bar{\j}\circ\bar{id}_{Y}\circ\phi$. Then by 
Lemma~\ref{fraction_equiv}, $[i]\backslash [u]$ and $[j]\backslash [v]$ 
are equivalent.

Second, it is clear that an identity fine shape class $[id_{X}]_{fSh}$ 
corresponds to the identity fraction class: represent $[id_{X}]_{fSh}$ 
by $id_{M}$ for some AR $M$ containing $X$ as a closed additive homotopy 
negligible subset, and note that by Proposition~\ref{embed_MC}, 
$FC(id_{M})$, contains a copy of 
$X \times [0,1] = MC(id_{X})$, which makes the fraction 
$X \to FC(id_{M}) \leftarrow X$ equivalent to 
$X \to X \times [0,1] \leftarrow X$, which belongs to the identity 
fraction class.

Third, consider the composition. Assume fine shape classes 
$[\phi] \in [X,Y]_{fSh}$ and $[\psi] \in [Y,Z]_{fSh}$ are represented by 
approaching maps $\phi\colon M\setminus X \to N\setminus Y$ and 
$\psi\colon N\setminus Y \to L\setminus Z$ (where $M$, $N$, and $L$ are 
ARs containing $X$, $Y$, and $Z$ as closed additive homotopy negligible 
subset), and consider the fractions. We have maps 
$X \xrightarrow{u} FC(\phi) \xleftarrow{i} Y$, 
$Y \xrightarrow{v} FC(\psi) \xleftarrow{j} Z$, and 
$X \xrightarrow{w} FC(\psi\circ\phi) \xleftarrow{k} Z$, which all extend 
to embeddings 
$M \xrightarrow{\bar{u}} AC(\phi) \xleftarrow{\bar{\i}} N$, 
$N \xrightarrow{\bar{v}} AC(\psi) \xleftarrow{\bar{\j}} L$, and 
$M \xrightarrow{\bar{w}} AC(\psi\circ\phi) \xleftarrow{\bar{k}} L$. 
For the composition of fraction classes 
$[j]\backslash [v]\circ[i]\backslash [u]$, we can take an approaching map 
$\chi \colon AC(\phi)\setminus FC(\phi) \to AC(\psi)\setminus FC(\psi)$ 
that simply collapses $AC(\phi)\setminus FC(\phi)$ onto 
$N\setminus Y$, which is then embedded into 
$AC(\psi)\setminus FC(\psi)$; clearly $\chi$ extends on $Y$ by 
identity. Take $FC(\chi)$ ($FC(\phi)$ is additive homotopy negligible in 
$AC(\phi)$ since $Y$ is so in $N$, per Proposition~\ref{AC_FC_props}(2); 
thus $FC(\chi)$ is well-defined), and denote the standard embeddings 
by $FC(\phi) \xrightarrow{f} FC(\chi) \xleftarrow{g} FC(\psi)$, 
extending to 
$AC(\phi) \xrightarrow{\bar{f}} AC(\chi) \xleftarrow{\bar{g}} AC(\psi)$. 
Then $[gv] = [fi]$ (homotopy 
provided by $\chi$ extending on $Y$ and Proposition~\ref{embed_MC}), and 
$g$ is an FDR-embedding, thus the fraction $[gj]\backslash [fu]$ belongs 
to the composition class $([j]\backslash [v])\circ([i]\backslash [u])$:

$$\xymatrix{
 & & FC(\psi\circ\phi) & & \\
X \ar[dr]^{[u]} \ar[urr]^{[w]} & & Y \ar[dl]|{\circ}_{[i]} \ar[dr]^{[v]} & & Z \ar[dl]|{\circ}_{[j]} \ar[ull]|{\circ}_{[k]} \\
 & FC(\phi) \ar[dr]^{[f]} & & FC(\psi) \ar[dl]|{\circ}_{[g]} & \\
 & & FC(\chi) & & 
}$$

We have the embeddings $\bar{f}\circ\bar{u} \colon M \to AC(\chi)$ 
and $\bar{g}\circ\bar{\j} \colon L \to AC(\chi)$, as well as an 
approaching homotopy 
$\Omega \colon (M\setminus X) \times [0,1] \to AC(\chi)\setminus FC(\chi)$ 
with $\Omega_{1} = \bar{f}\circ\bar{u}|_{M\setminus X}$ and 
$\Omega_{0} = \bar{g}\circ\bar{\j}\circ\psi\circ\phi$ 
(provided by approaching strong deformation retracting 
$AC(\chi)\setminus FC(\chi)$ onto 
$AC(\psi)\setminus FC(\psi)$ and then onto $L\setminus Z$). Thus 
by Lemma~\ref{fraction_equiv} we obtain the equivalence 
$T([\psi]\circ[\phi]) = [k]\backslash [w] = [gj]\backslash [fu] = 
([j]\backslash [v])\circ([i]\backslash [u]) = T([\psi])\circ T([\phi])$.
\end{proof}

From this proof, we also derive

\begin{corollary}\label{maps_fSh_homotopic}
Given two maps $f,g\colon X \to Y$ such that $[f]_{fSh} = [g]_{fSh}$, 
there exist a space $Z$ along with a map $h\colon Y \to Z$ such that $h$ 
is a closed embedding and a fine shape equivalence, and $hf \simeq hg$. 
Moreover, there are closed embeddings of $MC(f)$ and $MC(g)$ into $Z$, 
at least one of which is a fine shape equivalence.
\end{corollary}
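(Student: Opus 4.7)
The plan is to apply the fine shape cylinder construction of Section~\ref{sec_AC_FC} to the approaching homotopy that witnesses $[f]_{fSh}=[g]_{fSh}$. First I would pick ARs $M\supseteq X$ and $N\supseteq Y$ with $X$ and $Y$ closed additive homotopy negligible (Proposition~\ref{AR_props}(7)), extend $f,g\colon X\to Y$ to $\bar f,\bar g\colon M\to N$ with $\bar f^{-1}(Y)=\bar g^{-1}(Y)=X$ (Lemma~\ref{extend_map_to_fSh}(1)), and set $\phi:=\bar f|_{M\setminus X}$ and $\psi:=\bar g|_{M\setminus X}$. These both represent the common fine shape class over the common pair $(M,N)$, so unpacking Definition~\ref{def_fSh_class} together with Lemma~\ref{extend_map_to_fSh}(2), used to splice the arbitrary extensions of identity provided there with homotopies to the genuine identities, yields a direct approaching homotopy from $\phi$ to $\psi$; a reparametrization in $t$ that keeps it equal to $\phi$ on $M\times[0,\varepsilon]$ and to $\psi$ on $M\times[1-\varepsilon,1]$ then produces an approaching homotopy $\Psi\colon (M\times[0,1])\setminus(X\times[0,1])\to N\setminus Y$ which in addition extends continuously on the closed subset $X\times\{0,1\}\subset X\times[0,1]$ by the partial map $(f,g)$ equal to $f$ on $X\times\{0\}$ and to $g$ on $X\times\{1\}$.

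Second, noting that $X\times[0,1]$ is additive homotopy negligible in $M\times[0,1]$ (extend the additive homotopy of $M$ trivially in the new coordinate), I set $Z:=FC(\Psi)$ and let $h\colon Y\to Z$ be the standard embedding. By Proposition~\ref{FC_fdr} applied with $M\times[0,1]$ and $N$ as the ambient ARs, $h$ is an FDR-embedding, hence in particular a closed embedding and a fine shape equivalence.

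Third, Proposition~\ref{embed_MC} applied to $\Psi$ with $A:=X\times\{0,1\}$ and partial map $(f,g)$ embeds the metrizable mapping cylinder $MC_{X\times[0,1]}((f,g))$ as a closed subset of $Z$. This ``H-shape'' decomposes as $X\times[0,1]\cup MC(f)\cup MC(g)$, with $MC(f)$ and $MC(g)$ sharing their base $Y$ (which embeds into $Z$ precisely as $h(Y)$) and attaching their tops to $X\times\{0\}$ and $X\times\{1\}$ respectively; restricting yields closed embeddings $e_f\colon MC(f)\hookrightarrow Z$ and $e_g\colon MC(g)\hookrightarrow Z$. Inside $Z$ the embedded $MC(f)$ provides a homotopy $hf\simeq u_0'$, the embedded $MC(g)$ provides $hg\simeq u_1'$, and the embedded $X\times[0,1]$ provides $u_0'\simeq u_1'$, where $u_k'\colon X\to Z$ is the embedding $x\mapsto(x,k)$; concatenating these gives $hf\simeq hg$.

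Finally, for the ``moreover'' part, the inclusion $Y\hookrightarrow MC(f)$ is a homotopy equivalence via the standard strong deformation retraction (Remark~\ref{MC_remarks}(2)), hence a fine shape equivalence; likewise for $MC(g)$. Since $h$ factors through either $e_f$ or $e_g$ and is itself a fine shape equivalence, the two-out-of-three property (Proposition~\ref{fSh_eq_props}(1)) forces both $e_f$ and $e_g$ to be fine shape equivalences, which is even stronger than the ``at least one'' claimed. The hardest point, where I expect most of the technical work to go, is arranging $\Psi$ to extend continuously on $X\times\{0,1\}$ by $(f,g)$ rather than merely approach $Y$ there; this is where the reparametrization near $t=0,1$ is essential, exploiting the fact that $\bar f,\bar g$ are honest continuous maps rather than just approaching maps.
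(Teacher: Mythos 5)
Your proposal is correct, and it takes a genuinely different route from the paper's proof, though both rely on the same $AC/FC$ machinery and on Propositions~\ref{embed_MC} and~\ref{FC_fdr}.

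The paper derives the corollary by invoking the already-proved well-definedness of the functor $T$ (Theorem~\ref{define_T}): the fractions $X \to MC(f) \leftarrow Y$ and $X \to MC(g) \leftarrow Y$ represent $T([f]_{fSh})=T([g]_{fSh})$ and so are equivalent, and the first claim drops out of the definition of fraction equivalence. The explicit construction, needed for the ``moreover'' part, builds a map $\omega\colon MC(\bar{f})\setminus MC(f)\to MC(\bar{g})\setminus MC(g)$ from an approaching homotopy $F$ (as in the proof of Proposition~\ref{AC_extend}) and then sets $Z:=FC(\omega)$; the resulting picture in $Z$ is the ``rectangle'' $MC(f)\cup (X\times[0,1])\cup MC(g)\cup (Y\times[0,1])$, with $h$ obtained as a composite $Y\subset MC(g)\subset Z$. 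You instead set $Z:=FC(\Psi)$ directly, where $\Psi\colon (M\times[0,1])\setminus(X\times[0,1])\to N\setminus Y$ is the (reparametrized) approaching homotopy between $\bar f|_{M\setminus X}$ and $\bar g|_{M\setminus X}$, and use Proposition~\ref{embed_MC} with the partial map $(f,g)$ on $X\times\{0,1\}$; the picture is the ``H-shape'' $MC_{X\times[0,1]}((f,g))$, and $h$ is a single standard embedding $Y\to FC(\Psi)$, which is an FDR-embedding by Proposition~\ref{FC_fdr}. Your approach is somewhat more self-contained and gives a cleaner statement (you show \emph{both} $e_f$ and $e_g$ are fine shape equivalences, not just one); the paper's approach is structured to reuse Lemma~\ref{fraction_equiv} and the proof of Theorem~\ref{define_T}, fitting the corollary into the already-established fraction-equivalence framework. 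The one place you flag as delicate --- arranging $\Psi$ to honestly extend by $(f,g)$ on $X\times\{0,1\}$ via a reparametrization making it stationary near $t=0,1$ --- is indeed the key technical point, and your handling of it (splicing $\bar{id}_Y\circ\phi\simeq\phi$ and $\psi\circ\bar{id}_X\simeq\psi$ via Lemma~\ref{extend_map_to_fSh}(2), then pinching the ends) is sound; the paper's version of the same issue is absorbed into Remark~\ref{FC_equiv_to_MC} and the $\omega$-construction.
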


\begin{proof}
Choose any ARs $M$ and $N$ containing $X$ and $Y$ respectively as closed 
homotopy negligible subsets, and choose respective extensions 
$\bar{f},\bar{g}\colon M \to N$ with 
$\bar{f}^{-1}(Y) = \bar{g}^{-1}(Y) = X$ (of course, $\bar{f}|_{X} = f$ 
and $\bar{g}|_{X} = g$). As per Remark~\ref{FC_equiv_to_MC}, we have 
well-defined left fractions $X \to MC(f) \xleftarrow{} Y$ and 
$X \to MC(g) \xleftarrow{} Y$ representing $T([f]_{fSh})$ and 
$T([g]_{fSh})$ (additive homotopy negligibility rendered irrelevant by 
the same remark); since $[f]_{fSh} = [g]_{fSh}$, these fractions must be 
equivalent. The corollary now follows from the definition of fraction 
equivalence (Definition~\ref{def_left_cat}(3)).

The more or less explicit construction, as derived from our proofs, is 
as follows: first we take some approaching homotopy, say, 
$F\colon (M\setminus X) \times [0,1] \to N\setminus Y$ between 
$F_{0} = \bar{f}|_{M\setminus X}$ and $F_{1} = \bar{g}|_{M\setminus X}$ 
(which affirms the equality $[f]_{fSh} = [g]_{fSh}$ as per 
Definition~\ref{def_fSh_class}). Second, we construct
$\omega\colon MC(\bar{f})\setminus MC(f) \to MC(\bar{g})\setminus MC(g)$ 
as in the proof of Proposition~\ref{AC_extend}, using $F$, such that 
$\omega$ extends onto the standard embeddings of $X$ and $Y$ into 
$MC(f)$, and carries those to the standard embeddings of the same into 
$MC(g)$. Third, as in the proof of Lemma~\ref{fraction_equiv}, we take 
$Z = FC(\omega)$; by the construction of $FC(\omega)$, $Z$ contains 
embeddings of $MC(f)$ and $MC(g)$, the latter being an FDR-embedding
(Proposition~\ref{FC_fdr}), but also $Z$ contains 
(Proposition~\ref{embed_MC}) embeddings of $X \times [0,1]$ and 
$Y \times [0,1]$ which are precisely the mapping cylinders of the 
extensions of $\omega$ onto $X$ and $Y$ respectively. Fourth, consider 
the embedded thus ``rectangular'' subspace 
$MC(f) \cup (X \times [0,1]) \cup MC(g) \cup (Y \times [0,1])$ of $Z$; 
here the union is such that $X \times \{0\}$ and $Y \times \{0\}$ are 
the standard embeddings into $MC(f)$, whereas $X \times \{1\}$ and 
$Y \times \{1\}$ are those into $MC(g)$. We see that the closed 
embeddings $h_{0}\colon Y = Y \times \{0\} \subset MC(f) \subset Z$ and 
$h_{1}\colon Y = Y \times \{1\} \subset MC(g) \subset Z$ are homotopic 
(along $Y \times [0,1])$, and $h_{1}$ is an FDR-embedding as a 
composition of such; choose $h := h_{1}$. Now the embeddings 
$X = X \times \{0\} \subset MC(f) \subset Z$ and 
$X = X \times \{1\} \subset MC(g) \subset Z$ are homotopic to each 
other (along $X \times [0,1]$), but also to $h_{0}f$ and $h_{1}g$ (along 
$MC(f)$ and $MC(g)$) respectively, thus 
$hf = h_{1}f \simeq h_{0}f \simeq h_{1}g = hg$.
\end{proof}

Finally we obtain

\begin{theorem}\label{ST_inverse}
The functors $S$ and $T$ are inverse to each other: 
$ST = id_{\mathrm{fSh(M)}}$, $TS = id_{[FDR]\backslash \mathrm{hM}}$, 
providing a category isomorphism 
$\mathrm{fSh(M)} \cong [FDR]\backslash \mathrm{hM}$.
\end{theorem}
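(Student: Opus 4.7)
Since both $S$ and $T$ are constant on objects by construction, the plan is to verify only that the compositions act as the identity on morphisms, treating the two directions separately.

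For $ST = \mathrm{id}_{\mathrm{fSh(M)}}$, I would proceed as follows. Given a fine shape class $[\phi] \in [X,Y]_{fSh}$, first choose a representative $\phi\colon M\setminus X \to N\setminus Y$ in which $X$ and $Y$ are \emph{additively} homotopy negligible in the ARs $M$ and $N$; such a representative always exists by Proposition~\ref{AR_props}(7). Then $T([\phi])$ is by definition the class of the fraction $X \xrightarrow{[u]} FC(\phi) \xleftarrow{[i]} Y$ formed by the standard embeddings, and applying $S$ yields $[i]_{fSh}^{-1}\circ[u]_{fSh}$, which equals $[\phi]$ by Proposition~\ref{FC_fdr}. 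This direction is essentially immediate from the construction of the two functors.

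For $TS = \mathrm{id}_{[FDR]\backslash\mathrm{hM}}$, I would start from a fraction represented by $X \xrightarrow{u} Z \xleftarrow{i} Y$ with $i$ an FDR-embedding. The plan is to produce a carefully chosen approaching representative of the fine shape class $S([i]\backslash[u]) = [i]_{fSh}^{-1}\circ[u]_{fSh}$ and then invoke Lemma~\ref{fraction_equiv}. Concretely, take an AR $M \supset X$ with $X$ additively homotopy negligible (via Proposition~\ref{AR_props}(7)), and use Corollary~\ref{fdr_AR_property} on the FDR data for $i$ to obtain ARs $N \supset Y$ (with $Y$ additively homotopy negligible) and $L \supset Z$ such that $N$ is closed in $L$ with $N \cap Z = Y$, together with an approaching strong deformation retraction $\Phi\colon (L\setminus Z)\times [0,1] \to L\setminus Z$ onto $N\setminus Y$. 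Extend $u$ to $\bar{u}\colon M \to L$ with $\bar{u}^{-1}(Z) = X$, and set $\phi := \Phi_{1}\circ\bar{u}|_{M\setminus X}$; this $\phi$ is an approaching map representing $S([i]\backslash[u])$, so $T([\phi])$ is the fraction $X \xrightarrow{[u_{FC}]} FC(\phi) \xleftarrow{[i_{FC}]} Y$ of standard embeddings into $FC(\phi)$.

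The final step is to identify $[i_{FC}]\backslash[u_{FC}]$ with $[i]\backslash[u]$ by applying Lemma~\ref{fraction_equiv} to the cospan $X \xrightarrow{u} Z \xleftarrow{i} Y$, using $\bar{v} = \bar{u}$ and the inclusion $\bar{\j}\colon N \subset L$. The required approaching homotopy $\Omega$ between $\bar{u}|_{M\setminus X}$ and $\bar{\j}|_{N\setminus Y}\circ\phi$ is provided directly by $\Omega_{t}(m) := \Phi_{1-t}(\bar{u}(m))$, which is well-defined and approaching precisely because $\Phi$ is an approaching strong deformation retraction and $\bar{u}^{-1}(Z) = X$ --- exactly the situation described in Remark~\ref{fraction_equiv_remark}. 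The main obstacle I anticipate is the bookkeeping required to line up the AR structures produced by Corollary~\ref{fdr_AR_property} so that $X \subset M$, $Y \subset N$, and $N \subset L$ all coexist with the right (additive) homotopy negligibility properties simultaneously; once that is arranged, the composite $\Phi_{t}\circ\bar{u}$ and Lemma~\ref{fraction_equiv} close the argument.
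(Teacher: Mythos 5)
Your proposal is correct and follows essentially the same approach as the paper: $ST = \mathrm{id}$ is handled by Proposition~\ref{FC_fdr} after choosing an additively negligible representative, and $TS = \mathrm{id}$ by constructing the approaching homotopy from the FDR retraction and invoking Lemma~\ref{fraction_equiv}. The only discrepancy is that your $\Omega_t(m) := \Phi_{1-t}(\bar u(m))$ has the endpoints $\Omega_0$ and $\Omega_1$ swapped relative to what Lemma~\ref{fraction_equiv} asks for, but since a homotopy may be reversed by $t \mapsto 1-t$ this is a harmless index slip rather than a gap.
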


\begin{proof}
First, both functors are constant on objects.

Second, for any fine shape class $[\phi]$ we have 
$ST([\phi]) = [\phi]$ from the definitions and Proposition~\ref{FC_fdr}.

Third, let a left fraction class from $X$ to $Y$ be represented by 
specific maps $X \xrightarrow{u} Z \xleftarrow{i} Y$ with $i$ an 
FDR-map. The latter means there exist ARs $L \supset Z$ and 
$N \supset Y$, where inclusions are those of closed homotopy negligible 
subsets with $Y$ additively so (use Proposition~\ref{AR_props}(7) and 
Corollary~\ref{fdr_AR_property}; see also 
Remark~\ref{fraction_equiv_remark}), the embedding 
$\bar{\i}\colon N \subseteq L$ with $N \cap Z = Y$ extending $i$, and 
the $Z\times [0,1] - Z$-approaching strong deformation retraction 
$\Psi\colon (L\setminus Z)\times [0,1] \to L\setminus Z$ such that 
$\Psi_{1}$ is a specific $Z-Y$-approaching map from $L\setminus Z$ to 
$N\setminus Y$ representing the fine shape class $[i]_{fSh}^{-1}$. Now 
let $M$ be any AR containing $X$ as a closed additive homotopy 
negligible subset, and $\phi\colon M\setminus X \to L\setminus Z$ any 
specific approaching map extending $u$, and thus representing 
$[u]_{fSh}$. Then $S([i]\backslash [u])$ can be represented by 
$\Psi_{1} \circ \phi$. Take $FC(\Psi_{1}\circ\phi)$, and denote the 
embeddings by 
$X \xrightarrow{v} FC(\Psi_{1}\circ\phi) \xleftarrow{j} Y$. The map 
$$(M\setminus X) \times [0,1] \ni (m,t) \mapsto \Psi_{t}\circ\phi(m) \in L\setminus Z$$
provides an approaching homotopy between 
$\phi = \Psi_{0}\circ\phi$ and $\Psi_{1}\circ\phi$. By 
Lemma~\ref{fraction_equiv}, we obtain the desired equivalence
$[j]\backslash [v] = [i]\backslash [u]$, and conclude that 
$TS = id_{[FDR]\backslash hM}$.
\end{proof}

\appendix

\section*{Appendix}

Here we show why the approaching map cylinder, as defined by 
Mrozik~\cite{Mrozik1990} for compact spaces, does not seem to extend 
directly to the noncompact case.

Let an approaching map $\phi\colon M\setminus X \to N\setminus Y$ be 
given, and choose a homotopy $H\colon M \times [0,1] \to M$ with 
$H_{0} = id_{M}$ and $H(M \times (0,1]) \subseteq M\setminus X$; here 
$H$ being additive will not be relevant. We can easily define a 
metrizable topology on the set $M \times (0,1] \cup N$ by inserting it 
into $M \star N$: take the standard embedding of $N$, 
and send $(m,s) \in M \times (0,1]$ to $(m,2s-1,\phi\circ H_{s}(m))$. 
In terms of Definition~\ref{define_AC_FC}, this would extend 
$C_{+} \cup C_{+,0}$ across the whole join and remove 
$C_{-} \cup C_{-,0}$ entirely (or collapse it onto $N$). Similarly 
to~\cite{Mrozik1990}, denote the resulting space by $D(\phi)$, and write 
$C(\phi)$ for the subspace $X \times (0,1] \cup Y$.

The problem is that the obvious retraction given by 
$$
D(\phi)\setminus C(\phi) \ni (m,s) \mapsto \phi(m) \in N\setminus Y
$$
is not, in general, a $C(\phi)-Y$-approaching map, and therefore there 
is no fine shape class inverse to the one given by the embedding 
$Y \subseteq C(\phi)$. By the definition of a metrizable join, a 
sequence $(m_{k},2s_{k}-1,\phi\circ H_{s_{k}}(m_{k}))$ converges to a 
point $y \in Y$ if and only if $s_{k} \to 0$ and 
$\phi\circ H_{s_{k}}(m_{k}) \to y$ in $N$. For the retraction to be 
approaching, we would need $\phi(m_{k})$ to have an accumulation point 
in $Y$, which does not follow from the premises in the noncompact case. 
This also means that $(D(\phi),C(\phi))$ does not have the universal 
property of the usual cylinder (as mentioned in 
Remark~\ref{AC_universal}) outside of compacta.

For an actual counterexample, take the quotient set 
$M := \N \times [0,1]/(\N \times \{1\})$ in the metrizable topology 
inherited from a piecewise linear insertion of $M$ 
into $\R^{2}$ which inserts the apex $1$ as $(0,1)$, and $(k,0)$ as 
$(k,0)$ for all $k \in \N$. $M$ is an AR because it is a contractible 
polyhedron. Take $X := \N \times \{0\} \subset M$, 
$N := \R \times [0,1]$, and $Y := \R \times \{0\} \subset Y$. Define 
$\phi\colon M\setminus X \to N\setminus Y$ by 
$\phi(k,s) := (k\sin{\frac{\pi}{s}},s)$; this is well-defined for $s=1$. 
Then $\phi$ is $X-Y$-approaching. Given a homotopy $H$ on $M$ (as usual, 
$H_{0} = id_{M}$ and $H(M \times (0,1]) \subseteq M\setminus X$; $H$ can 
be chosen additive, but again it will not be relevant), we 
construct a sequence $((k,s_{k}),2t_{k}-1,\phi\circ H_{t_{k}}(k,s_{k}))$ 
in $M \star N$ such that $\phi\circ H_{t_{k}}(k,s_{k}) \to (0,0)$ in 
$N$, and $t_{k} \to 0$, but $\phi(k,s_{k})$ has no accumulation points 
in $Y$ (or even in $N$).

We construct ($s_{k}$,$t_{k}$) separately for each $k$. First, 
assume $U_{k}$ is the open set of $[0,1] \times [0,1]$ such that for all 
$(s,t) \in U$, $0 < s < \frac{1}{2^{k}}$, 
$0 \leq t < \frac{1}{2^{k}}$, and $H_{t}(k,s) \in \{k\} \times (0,1)$ 
($U_{k}$ is nonempty since $H_{0}(k,0) = (k,0)$ and $H$ is continuous). 
Then we can write $H_{t}(k,s) = (k,h_{k}(s,t))$ for some continuous 
function $h_{k}$ and all $(s,t) \in U_{k}$.

Consider the set $B_{k}$ of all pairs $(s,t) \in U_{k}$ such that 
$\sin{\frac{\pi}{s}} = 1$ and $\sin{\frac{\pi}{h_{k}(s,t)}}$ = 0. We 
claim that $B_{k}$ is nonempty: first, points 
$(s_{l},t) = (\frac{2}{4l+1},0)$, satisfying 
$\sin{\frac{\pi}{s_{l}}} = 1$, 
are in $U_{k}$ for all large natural $l$, and converge to $(0,0)$ in 
$[0,1] \times [0,1]$. Second, if for every (large) $l$ and for every 
$t \in [0,\frac{1}{2^{k}}]$ we were to have 
$\sin{\frac{\pi}{h_{k}(s_{l},t)}} \neq 0$, then also for every $t$, 
$|s_{l} - h_{k}(s_{l},t)| \to 0$. 
That implies $h_{k}(s_{l},t) \to 0$ as $l \to +\infty$, and then 
$H_{t}(k,0) = (k,0)$ for $t > 0$, contradicting the choice of $H$. 
Therefore, for some $l$ there is $t \in [0,\frac{1}{2^{k}}]$ such that 
$\sin{\frac{\pi}{h_{k}(s_{l},t)}} = 0$. Thus we can choose 
$(s_{k},t_{k}) \in B_{k}$. By construction, 
$s_{k} \to 0$, $t_{k} \to 0$, 
$\phi(k,s_{k}) = (k\sin{\frac{\pi}{s_{k}}},s_{k}) = (k,s_{k})$, and 
$\phi\circ H_{t_{k}}(k,s_{k}) = (0,h_{k}(s_{k},t_{k})) \to (0,0)$. This 
shows that in general $\phi(m_{k})$ having an accumulation point in $Y$ 
does not follow from $\phi\circ H_{t_{k}}(m_{k})$ having one.

This counterexample does not, of course, constitute definitive proof 
that there is no metrizable topology on $M \times (0,1] \cup N$ that 
makes the pair $(M \times (0,1] \cup N,X \times (0,1] \cup Y)$ into an 
object of $\mathrm{MAppr}$ having all the properties we need for 
Lemma~\ref{fraction_equiv}. The question of defining a mapping cylinder 
in $\mathrm{MAppr}$ is still an open one in the noncompact case, in the 
sense of both that lemma and the pushout property. For 
compact $M$ and $N$, it can be shown that $(D(\phi),C(\phi))$ 
defined above and $(AC(\phi),FC(\phi))$ are isomorphic objects of 
$\mathrm{hMAppr}$ by approaching strong deformation retracting the 
subset $C_{-}(\phi) \cup (N\setminus Y)$ of $AC(\phi)\setminus FC(\phi)$ 
onto its base $N\setminus Y$ --- in short, this shows that the cylinder 
of Mrozik is sufficient in the compact case.

\section*{Acknowledgements}
The author expresses gratitude to S.~Melikhov for the discussion of the 
subject and the results, and for remarks on theorem and proof 
formulations, as well as to the Advanced Doctoral Programme at Higher 
School of Economics for financial support.

\bibliographystyle{bibalph}
\bibliography{references} 
\end{document}